\documentclass{article}

\usepackage{amsmath} % 数学公式
\usepackage{amsthm}
\usepackage{amsfonts} %\textbf{}% 数学字体
\usepackage{amssymb} % 数学符号
\usepackage{graphicx} % 图片插入
\usepackage{hyperref} % 超链接
\usepackage{geometry} % 页面设置
\usepackage{float}
\usepackage{comment}
\usepackage{enumitem} % 列表设置
\usepackage{csquotes}
\usepackage{xcolor}
\newcommand{\keywords}[1]{\par\noindent\textbf{Keywords: }#1}

\newtheorem{proposition}{Proposition}
\newtheorem{theorem}{Theorem}
\newtheorem*{remark}{Remark}
\newtheorem{corollary}{Corollary}
\newtheorem{lemma}{Lemma}
\newtheorem{definition}{Definition}
\newtheorem{example}{Example}

\DeclareMathOperator{\Ev}{Ev}
\DeclareMathOperator{\wt}{wt}
\DeclareMathOperator{\im}{im}
\DeclareMathOperator{\Supp}{Supp}
\DeclareMathOperator{\Span}{span}
\DeclareMathOperator{\GL}{\mathrm{GL}}
\DeclareMathOperator{\PGL}{PGL}
\DeclareMathOperator{\Sym}{Sym}
\title{3-Designs from $\mathrm{GL}_2(\mathbb{F}_q)$-Invariant Subspaces of $\mathbb F_q[X,Y]_k$}
\date{}   % 或 \date{} 不显示日期
\author{Lewen Wang\and Huawei Wu\and Sihuang Hu}
\begin{document}
\maketitle
{\renewcommand{\thefootnote}{}
  		\footnotetext{
        This work was supported in part by the National Key Research and
Development Program of China under Grant 2021YFA1001000, in part by
the National Natural Science Foundation of China under Grant 12571576 and
Grant 12231014, and in part by the Shandong Provincial Natural Science
Foundation under Grant ZR2025QA05.

 			 Lewen Wang and Sihuang Hu are with the School of Cyber Science and Technology, Shandong University, Qingdao, Shandong 266237, China (emails: lewenwang3@gmail.com and husihuang@sdu.edu.cn). 
             Huawei Wu is with Shanghai Fintelli Box Technology Co., Ltd., Shanghai 200127, China (email: wuhuawei1996@gmail.com). The corresponding author is Sihuang Hu.
  		}
    }

%\begin{abstract}
%We develop a systematic framework for constructing $3$-designs from $\GL_2(\mathbb F_q)$-invariant subspaces of $\Sym^k(\mathbb F_q^2)$.
%Via the evaluation map on $\mathbb P^1(\mathbb F_q)$, these subspaces give rise to subcodes of the projective Reed--Solomon code. 
%From this perspective, both the supports of minimum-weight codewords in these subcodes and the supports of suitable fixed-weight codewords in their duals  yield $3$-designs. 
%Using the Cayley transform, we reformulate the construction on the unit circle and express the block conditions in terms of elementary symmetric polynomials.
%We then apply this framework to the Lucas subspaces $W_k^{\mathrm{Luc}}$, obtaining explicit block descriptions, nonemptiness results, and basic families of blocks.
%In particular, for $k=p^m+1$ we determine the resulting Steiner system associated with $W_k^{\mathrm{Luc}}$.
%In the ternary case $p=3$ and $k=7$, we connect the associated design with the ternary Melas code and explicitly determine the parameters left open in (Xu et al., Designs, Codes and Cryptography: Vol. 92, 2024).
%\end{abstract}

% \begin{keywords}
% $3$-designs, invariant subspaces, $\mathbb F_q[X,Y]_k$, Cayley transform, Lucas's theorem, Melas code, projective Reed--Solomon codes.
% \end{keywords}

\begin{abstract}
We present a uniform framework for constructing \(3\)-designs from
\(\mathrm{GL}_2(\mathbb F_q)\)-invariant subspaces of \(\mathbb F_q[X,Y]_k\), the space of homogeneous polynomials of degree \(k\). 
Given such a subspace \(W\), we associate a
\(\mathrm{PGL}_2(\mathbb F_q)\)-invariant family of \(k\)-subsets of
\(\mathbb P^1(\mathbb F_q)\). Whenever this family is nonempty, it forms a
\(3\text{-}(q+1,k,\lambda)\) design. 
Via the Cayley transform, the construction is reformulated on the unit circle
\(U_{q+1}\subseteq \mathbb F_{q^2}^{\times}\), where the block conditions
become explicit linear relations among elementary symmetric polynomials.
This reformulation unifies several previously disparate constructions and simplifies a number of delicate ad hoc computations.
When \(k\le q\), the evaluation map on
\(\mathbb P^1(\mathbb F_q)\) identifies \(W\) with a subcode \(C_W\) of the
projective Reed--Solomon code.     
We show that the associated block family is nonempty if and only if \(d(C_W)=q+1-k\). 
Under this condition,  the supports of
minimum-weight codewords in \(C_W\), as well as the supports of suitable
fixed-weight codewords in the dual code \(C_W^\perp\), yield further
\(3\)-designs.

Applying this framework to the Lucas subspaces, which form a distinguished family of invariant subspaces, we obtain explicit block
descriptions, classify the cases in which the defining conditions reduce to a
single equation, and establish several emptiness and nonemptiness results. 
In particular, for \(q=p^e\) and \(k=p^m+1\), we show that the associated block
family is nonempty if and only if \(m\mid e\), in which case it yields the
Steiner system \(S(3,p^m+1,q+1)\). Finally, in the ternary case \(p=3\) and
\(k=7\), we use the weight distribution of the ternary Melas code to
determine the design parameters left
undetermined by Xu et al.
\medskip
\keywords{$3$-designs, $\mathrm{GL}_2(\mathbb{F}_q)$-invariant subspaces, Lucas subspaces, Cayley transform, homogeneous polynomial spaces}
\end{abstract}
%\noindent\textbf{Keywords:} $3$-designs, $\mathrm{GL}_2(\mathbb{F}_q)$-invariant subspaces, Lucas subspaces, Cayley transform, homogeneous polynomial spaces

%\begin{keywords}

\section{Introduction}

Let $X$ be a finite set of cardinality $|X|=v$, whose elements are called \emph{points}, and let $\mathcal{B}$ be a collection of $k$-subsets of $X$, whose elements are called \emph{blocks}. 
The pair $(X,\mathcal{B})$ is called a \emph{$t$-$(v,k,\lambda)$ design} if every $t$-subset of $X$ is contained in exactly $\lambda$ blocks of $\mathcal{B}$. The integer $t$ is called the \emph{strength} of the design, and $\lambda$ is called its \emph{index}.
If no block is repeated, then the design is said to be \emph{simple}. 
In this paper, we consider only simple designs. 
Let $b=|\mathcal{B}|$ denote the number of blocks.  Then the parameters satisfy the well-known relation
\begin{equation}\label{solve_b}
    b\binom{k}{t}=\lambda\binom{v}{t}.
\end{equation}
In the special case $\lambda=1$, the design is called a \emph{Steiner system} and is denoted by $S(t,k,v)$.

Let $\mathbb{F}_q$ be a finite field, where $q$ is a prime power.
An $[n,\kappa,d]$ \emph{linear code} $C$ over $\mathbb{F}_q$ is a $\kappa$-dimensional subspace of $\mathbb{F}_q^n$ with minimum Hamming distance $d$. 
The \emph{dual code} of $C$ is defined by 
$C^\perp=\{x\in\mathbb{F}_q^n:x^Tc=0,\forall c\in C\}$. 
For each $0\le i\le n$, let $A_i$ denote the number of codewords of weight $i$ in $C$. Then the sequence $(A_0,A_1,\dots,A_n)$ is called the \emph{weight distribution} of $C$.

The construction of $t$-designs has long drawn on ideas from both permutation group theory and coding theory~\cite{Assmus_Key_1992,Cameron_Lint_1991,Ding2022book,Conway1999}.
One classical line is the \emph{group-theoretic approach}. 
A well-known principle in design theory is that highly transitive permutation groups often give rise to $t$-designs through invariant families of subsets.
This viewpoint has generated many important constructions in design theory, and is closely related to other group-based methods arising from finite geometry and difference sets. 
For $3$-designs, Cameron~\emph{et al.} systematically studied the designs arising from the action of $\PGL(2,q)$ on $k$-subsets of $\mathbb{P}^1(\mathbb{F}_q)$ \cite{Cameron06}, and the corresponding $\mathrm{PSL}(2,q)$ case was developed in \cite{Cameron06-2}. 
This line was further refined for special congruence classes of $q$ and particular block sizes; see, for example, \cite{Balachandran07,Liu12}. 
More recently, Tricot revisited the $\PGL(2,q)$ setting from the viewpoint of explicit multiplicative-type blocks and obtained further families of $3$-designs \cite{Tricot25}. 

Another classical line is the \emph{coding-theoretic approach}. 
Let $X=\{1,2,\dots,n\}$ be the set of coordinate positions. 
Given a linear code $C$ of length $n$, one considers the family $\mathcal{B}_k(C)$ consisting of the supports of all codewords of some fixed Hamming weight $k$. 
Under suitable conditions, the incidence structure $(X,\mathcal{B}_k(C))$ forms a $t$-design. 
A classical starting point of this approach is the Assmus--Mattson theorem \cite{AssmusMattson1969}, together with its later developments and variants, which give sufficient conditions for the supports of codewords of fixed weights in a linear code to form $t$-designs. 
Using the Assmus--Mattson theorem, Ding \cite{Ding18FiveWeight} and Ding and Li \cite{DingLi17} constructed infinite families of $2$-designs and $3$-designs from linear codes. 
% Other coding-theoretic constructions have also been investigated in the literature, including examples from BCH codes and affine-invariant codes \cite{Xiang22,Liu22}.
There are also constructions of t-designs from quadratic functions, APN functions, and other special polynomials \cite{Ding20-2,Xiang20,Tang20,Ling23}.  
Particularly influential in this direction are the works of Ding and Tang, who settled a $70$-year-old problem by constructing an infinite family of near MDS codes over $\mathbb{F}_{3^m}$ supporting $3$-designs and another infinite family over $\mathbb{F}_{2^{2m}}$ supporting $2$-designs~\cite{Ding20}.
Subsequently, Tang and Ding constructed an infinite family of linear codes over $\mathbb{F}_{2^{2m+1}}$ supporting $4$-designs, thereby settling another long-standing problem \cite{Tang21}.
Most recently, Xu~\emph{et al.} constructed several infinite families of $3$-designs from special symmetric polynomials over $\mathbb{F}_{3^m}$ \cite{Xu2024}. 
Nevertheless, many constructions in this direction remain essentially ad hoc, relying on delicate calculations tailored to particular code families or polynomial identities. 
This becomes especially complex when one seeks explicit formulas for the design parameters. 
In particular, in \cite{Xu2024}, the existence of the relevant $3$-designs was established, but the associated parameters $\lambda_1$ and $\lambda_2$ were left unspecified.

Existing constructions from symmetric polynomials and from code supports have largely been developed along separate lines. Our goal is to place these constructions into a common invariant-subspace framework, in which the links among symmetric polynomials, designs, and codes become more transparent.The main contributions of this paper can be summarized as follows.
\begin{itemize}
\item We develop a systematic and unified framework for constructing $3$-designs from $\GL_2(\mathbb F_q)$-invariant subspaces of $\mathbb F_q[X,Y]_k$. 
For each such subspace $W$, we associate a $\PGL_2(\mathbb F_q)$-invariant family $\mathcal B_W$ of $k$-subsets of $\mathbb P^1(\mathbb F_q)$, and prove that every nonempty $\mathcal B_W$ forms a $3$-$(q+1,k,\lambda)$ design. 
Using the Cayley transform, we further reformulate the construction on the unit circle and express the block conditions as explicit linear relations among elementary symmetric polynomials. 
This reformulation clarifies the roles of both symmetric polynomials and supporting codes in the construction, and turns several delicate ad hoc computations into more direct arguments.
% This places several previously disparate constructions into a common conceptual setting, including those arising from symmetric polynomials and those arising as support designs of linear codes.

\item We show that this framework admits an intrinsic coding-theoretic interpretation. 
When $k\le q$, the evaluation map identifies $W$ with a subcode $C_W$ of the projective Reed--Solomon code, and we prove that $\mathcal B_W\neq\varnothing$ if and only if $d(C_W)=q+1-k$. 
In this case, the supports of the minimum-weight codewords of $C_W$ form a $3$-design. 
More generally, for each $w$ with $3\le w\le q+1$, if $C_W^\perp$ contains codewords of weight $w$, then the supports of those codewords also form a $3$-design.

\item We apply the framework to the Lucas subspaces $W_k^{\mathrm{Luc}}$, a special but fairly broad family of invariant subspaces. 
In this setting, we obtain explicit descriptions of the associated block sets, identify when the defining conditions reduce to a single linear equation, and prove several emptiness and nonemptiness results. 
Within this family, we recover, as special cases, the constructions in Theorem~3 of \cite{Tang21} and Theorem~2 of \cite{Xu2024}.
In particular, for $q=p^e$ and $k=p^m+1$, we show that the associated block set is nonempty if and only if $m\mid e$, in which case it yields the Steiner system $S(3,p^m+1,q+1)$. 
Moreover, in the ternary case $p=3$ and $k=7$, the invariant-subspace perspective leads naturally to the ternary Melas code, which enables us to determine the parameters left undetermined in \cite{Xu2024}.
% \item We use the invariant-subspace framework to determine parameters that were left open in earlier constructions. 
% More precisely, by studying the Lucas subspaces $W_k^{\mathrm{Luc}}$, we obtain explicit descriptions of the associated block sets, identify when the defining conditions reduce to a single linear equation, and prove several emptiness and nonemptiness results. 
% In particular, for $q=p^e$ and $k=p^m+1$, we show that the associated block set is nonempty if and only if $m\mid e$, in which case it yields the Steiner system $S(3,p^m+1,q+1)$. 
% Within this class, we recover, as special cases, the constructions in Theorem~3 of \cite{Tang21} and Theorem~2 of \cite{Xu2024}. 
% In particular, in the ternary case $p=3$ and $k=7$, the invariant-subspace perspective leads naturally to the ternary Melas code, which allows us to determine the parameters left undetermined in \cite{Xu2024}.
\end{itemize}

The remainder of this paper is organized as follows.
Section~\ref{section:Preliminaries} recalls the necessary background and introduces the Lucas subspaces.
Section~\ref{section:Construction} develops the general framework for constructing $3$-designs, and gives an alternative description of the construction on the unit circle.
Section~\ref{section:Lucas} applies this framework to Lucas subspaces and studies the associated block sets.
Section~\ref{section:Lucas2} determines the parameters of the designs associated with $W_7^{\mathrm{Luc}}$ via the ternary Melas code, thereby solving the open parameter problem in \cite{Xu2024}.
Finally, Section~\ref{section:Conclusion} concludes the paper.

\section{Preliminaries}\label{section:Preliminaries}
\subsection{Homogeneous polynomial spaces and projective Reed--Solomon codes}
Let $q=p^m$, where $p$ is a prime and $m$ is a positive integer. We denote by $\mathbb{F}_q$ the finite field of $q$ elements, and by $\mathbb{F}_q^\times$ its multiplicative group. 
For an integer $k \ge 0$, let $\mathbb F_q[X,Y]_k$ denote the space of homogeneous polynomials of degree $k$ in the variables $X$ and $Y$ over $\mathbb F_q$, namely
\[
\mathbb F_q[X,Y]_k= \left\{ \sum_{i=0}^k a_i X^{k-i}Y^i : a_i \in \mathbb F_q \right\},
\]
and $\dim_{\mathbb F_q}(\mathbb F_q[X,Y]_k)=k+1$.

We identify the projective line $\mathbb{P}^1(\mathbb{F}_q)$ with the set $\mathbb{F}_q \cup \{\infty\}$, where $x \in \mathbb{F}_q$ corresponds to the homogeneous coordinates $[x:1]$ and $\infty$ corresponds to $[1:0]$.
For $f(X,Y)\in \mathbb F_q[X,Y]_k$, the evaluation of $f$ at a point of $\mathbb{P}^1(\mathbb{F}_q)$ is given by $f(x,1)$ for $x\in\mathbb{F}_q$, and by $f(1,0)$ at the point $\infty$.
Define the $\mathbb{F}_q$-linear evaluation map
\begin{align*}
    \Ev:\mathbb F_q[X,Y]_k&\longrightarrow \mathbb F_q^{\,q+1}\\
    f&\longmapsto \bigl((f(a,1))_{a\in\mathbb F_q},\,f(1,0)\bigr).
\end{align*}
We now determine its kernel.

\begin{lemma}\label{lem:kerEv}
Let $\theta(X,Y):=X^qY-XY^q.$
Then $\theta(X,Y)$ vanishes at every point of $\mathbb P^1(\mathbb F_q)$. Moreover,
\[
\ker(\Ev)=
\begin{cases}
0, & \text{if } k<q+1,\\[2mm]
\theta(X,Y)\cdot \mathbb F_q[X,Y]_{k-(q+1)}, & \text{if } k\ge q+1,
\end{cases}
\]
where $\theta(X,Y)\cdot \mathbb F_q[X,Y]_{k-(q+1)}=\{\theta(X,Y)f(X,Y):f(X,Y)\in\mathbb F_q[X,Y]_{k-(q+1)}\}$.
\end{lemma}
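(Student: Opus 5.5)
The plan has three steps: check that $\theta$ vanishes on $\mathbb P^1(\mathbb F_q)$, prove $\ker(\Ev)=0$ for $k<q+1$ by a root count, and prove $\ker(\Ev)=\theta\cdot\mathbb F_q[X,Y]_{k-(q+1)}$ for $k\ge q+1$ by factoring out linear forms.

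\textbf{Vanishing of $\theta$.} At $[a:1]$ we get $\theta(a,1)=a^q-a=0$ for every $a\in\mathbb F_q$. At $\infty=[1:0]$ we get $\theta(1,0)=0$. Hence $\Ev(\theta g)=0$ for every $g\in\mathbb F_q[X,Y]_{k-(q+1)}$, which gives the inclusion $\supseteq$ when $k\ge q+1$.

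\textbf{The reverse inclusion and the case $k<q+1$.} Both follow from one argument about linear factors. Let $f\in\mathbb F_q[X,Y]_k$ with $\Ev(f)=0$. First, $f(1,0)=0$ means that the coefficient of $X^k$ vanishes, so $Y\mid f$. Second, for each $a\in\mathbb F_q$, the dehomogenization $f(x,1)$ vanishes at $x=a$. For a homogeneous form this gives $(X-aY)\mid f$: substituting $X=(X-aY)+aY$ expands $f$ in powers of $X-aY$ and $Y$, and the term free of $X-aY$ is $f(a,1)Y^k=0$.

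The $q+1$ linear forms $Y$ and $X-aY$ $(a\in\mathbb F_q)$ are pairwise non-associate irreducibles in the UFD $\mathbb F_q[X,Y]$. Therefore their product divides $f$. That product is
\[
Y\prod_{a\in\mathbb F_q}(X-aY)=Y(X^q-XY^{q-1})=X^qY-XY^q=\theta,
\]
using $\prod_{a}(T-a)=T^q-T$, homogenized. So $f=\theta g$, where $g$ is homogeneous of degree $k-(q+1)$ if $f\neq 0$.

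If $k<q+1$, a nonzero $f$ of degree $k$ cannot be divisible by $\theta$, which has degree $q+1$. Hence $f=0$ and $\ker(\Ev)=0$. If $k\ge q+1$, we obtain $g\in\mathbb F_q[X,Y]_{k-(q+1)}$, which is the claimed kernel.

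\textbf{Main obstacle.} There is no real difficulty. The only point needing care is the homogeneous factor theorem, namely that vanishing at $[a:1]$ implies divisibility by $X-aY$, and that it holds for the point at infinity as well. The expansion in $X-aY$ and $Y$ described above handles this cleanly.
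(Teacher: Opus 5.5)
Your proof is correct, and it reaches the divisibility $\theta\mid f$ by a different mechanism than the paper.

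The paper dehomogenizes. From $f(1,0)=0$ it gets $a_0=0$, so $f(X,1)$ has degree at most $k-1$. The univariate factor theorem in $\mathbb F_q[X]$ then gives $f(X,1)=(X^q-X)g(X)$. For $k<q+1$, a degree count forces $f(X,1)=0$. For $k\ge q+1$, the degree bound $\deg g\le k-q-1$ (again from $a_0=0$) lets the paper rehomogenize $g$ to some $h$ of degree $k-(q+1)$. It then checks that homogenizing back recovers $f=\theta h$.

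You instead stay in the homogeneous ring throughout. Each of the $q+1$ points of $\mathbb P^1(\mathbb F_q)$, including $\infty$, gives a linear factor via the homogeneous factor theorem. Unique factorization in $\mathbb F_q[X,Y]$ then lets you multiply these pairwise non-associate primes into $\theta$.

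Your route puts $\infty$ on exactly the same footing as the affine points. It also avoids the homogenization bookkeeping and the need to bound $\deg g$. The cost is invoking the UFD property of a bivariate polynomial ring. The paper's route needs only one-variable polynomial arithmetic, with the point at infinity entering solely through $a_0=0$.
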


\begin{proof}
For every $x\in\mathbb{F}_q$, we have $\theta(x,1)=x^q-x=0$, and for the point at infinity $\infty$, we have $\theta(1,0)=0$.
Hence $\theta(X,Y)$ vanishes at every point of $\mathbb P^1(\mathbb F_q)$. 
It follows that, for $k\ge q+1$, the  subspace $\theta(X,Y)\cdot \mathbb F_q[X,Y]_{k-(q+1)}\subseteq \ker(\Ev).$
Conversely, let $f(X,Y)=\sum_{i=0}^k a_iX^{k-i}Y^i\in \ker(\Ev)$. 
Since $f(1,0)=0$, we have $a_0=0$, so $f(X,1)=\sum_{i=1}^k a_iX^{k-i}$ is a polynomial of degree at most $k-1$.
On the other hand, $f(x,1)=0$ for all $x\in\mathbb{F}_q$, which implies that $f(X,1)$ is divisible by $X^q-X$. Thus, $f(X,1)=(X^q-X)g(X)$ for some polynomial $g(X)$. 

If $k<q+1$, then $\deg f(X,1)\le k-1<q=\deg(X^q-X)$, and hence $f(X,1)=0$. Therefore $f=0$, and so $\ker(\Ev)=0$.
If $k\ge q+1$, then $\deg g(X)\le k-q-1.$
Let $h(X,Y)\in\mathbb F_q[X,Y]_{k-(q+1)}$ be the $(k-(q+1))$-homogenization of $g(X)$.
Since $a_0=0$, the $k$-homogenization of $f(X,1)$ recovers $f(X,Y)$ precisely. Therefore, 
\[
f(X,Y)
=(X^qY-XY^q)\,h(X,Y)
=\theta(X,Y)\,h(X,Y).
\]
This shows that $f\in \theta(X,Y)\cdot \mathbb F_q[X,Y]_{k-(q+1)}$, completing the proof.
\end{proof}

It follows from Lemma~\ref{lem:kerEv} that, whenever $k\le q$, the evaluation map $\Ev$ is injective. 
In this case, $\Ev$ identifies $\mathbb F_q[X,Y]_k$ with its image in $\mathbb F_q^{\,q+1}$ as an $\mathbb{F}_q$-vector space.
The image $\mathcal P_k:=\im(\Ev)$ is a $[q+1,k+1]$ linear code over $\mathbb F_q$, called the \emph{projective Reed--Solomon code}~\cite{ZhangWanKaipa2019,Lavauzelle2019}.

\begin{proposition}\label{prop:Pk-MDS}
For $k \le q$, the projective Reed--Solomon code $\mathcal{P}_k$ is a $[q+1,k+1,q-k+1]$ MDS code.
\end{proposition}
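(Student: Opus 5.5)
The plan is to read off the length and dimension from Lemma~\ref{lem:kerEv}, and then bound the minimum distance by a root count combined with the Singleton bound.

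\emph{Length and dimension.} The length is $q+1=|\mathbb P^1(\mathbb F_q)|$ by construction. Since $k\le q<q+1$, Lemma~\ref{lem:kerEv} shows that $\Ev$ is injective on $\mathbb F_q[X,Y]_k$. Hence $\dim \mathcal P_k=\dim \mathbb F_q[X,Y]_k=k+1$.

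\emph{Lower bound on the weight.} Take a nonzero $f(X,Y)=\sum_{i=0}^k a_iX^{k-i}Y^i$. I will show that $\Ev(f)$ has at most $k$ zero coordinates, i.e.\ $f$ vanishes at no more than $k$ points of $\mathbb P^1(\mathbb F_q)$. I split into two cases according to whether $f$ vanishes at $\infty=[1:0]$.

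If $f(1,0)=a_0\neq 0$, then $f(X,1)$ has degree exactly $k$. It is also nonzero, so it has at most $k$ roots in $\mathbb F_q$, and $\infty$ is not a zero. This gives at most $k$ zeros in total.

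If $a_0=0$, then $f(X,1)=\sum_{i\ge1}a_iX^{k-i}$ has degree at most $k-1$. It is nonzero, since a nonzero $f$ with $a_0=0$ still has some $a_i\ne0$ with $i\ge1$. So it has at most $k-1$ affine roots, and together with $\infty$ there are again at most $k$ zeros.

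An alternative route is to note that distinct zeros $[\alpha:\beta]$ give pairwise non-associate linear factors $\beta X-\alpha Y$ of $f$. Unique factorization in $\mathbb F_q[X,Y]$ and the degree bound then give the same count. Either way, every nonzero codeword has weight at least $q+1-k$.

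\emph{Upper bound and MDS.} The Singleton bound gives $d\le n-\dim+1=(q+1)-(k+1)+1=q+1-k$, so $d=q-k+1$ and the code is MDS. If one prefers an explicit witness, the product $\prod_{j=1}^{k}(X-a_jY)$ for distinct $a_j\in\mathbb F_q$ (possible since $k\le q$) has weight exactly $q+1-k$.

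There is no real obstacle here. The only care needed is in the point at infinity, which the case split on $a_0$ handles.
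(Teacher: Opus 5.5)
Your proof is correct. The paper gives no argument of its own here: it calls the statement standard and cites the literature~\cite{Sorensen1991,Lavauzelle2019}. So your proposal is a self-contained replacement for that citation, not a variant of an in-paper proof.

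Your route has three steps:
\begin{itemize}
\item the dimension $k+1$ comes from the injectivity in Lemma~\ref{lem:kerEv};
\item the bound $d\ge q+1-k$ comes from counting the zeros of a nonzero $f$ on $\mathbb P^1(\mathbb F_q)$, where your case split on $a_0=f(1,0)$ is the same device the paper uses inside the proof of Lemma~\ref{lem:kerEv};
\item equality follows from the Singleton bound, or from the explicit witness $\prod_{j=1}^k(X-a_jY)$.
\end{itemize}

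The citation buys brevity. Its cost is that the reader must reconcile the cited sources' conventions for projective Reed--Solomon codes with the paper's normalization of $\Ev$ at $(x,1)$ and $(1,0)$, for instance the choice of representatives of points of $\mathbb P^1(\mathbb F_q)$. This is harmless, since changing representatives only rescales coordinates by nonzero scalars. Your argument avoids the reconciliation altogether.

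Your argument also makes explicit the identity $\wt(\Ev(f))=q+1-\#\{x\in\mathbb P^1(\mathbb F_q): f(x)=0\}$, which the paper later uses in the proof of Proposition~\ref{prop:BW-nonempty}. Finally, your witness is exactly a block polynomial $F_S$ with $S\subseteq\mathbb F_q$ and $|S|=k$, anticipating Section~\ref{section:Construction}.
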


\begin{proof}
This is standard for projective Reed--Solomon codes; see, for example, \cite{Sorensen1991,Lavauzelle2019}.
\end{proof}

Assume that $k\le q$. Let $W\subseteq \mathbb F_q[X,Y]_k$ be any nonzero $\mathbb{F}_q$-subspace, and let $C_W:=\Ev(W)\subseteq \mathcal{P}_k$ be the corresponding subcode. Then we have the following result.
\begin{corollary}\label{rem:distance-bound}
The minimum distance of $C_W$ satisfies $d(C_W)\ge q+1-k.$ 
\end{corollary}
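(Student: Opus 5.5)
The corollary follows directly from Proposition~\ref{prop:Pk-MDS}, because the minimum distance of a subcode can never drop below that of the ambient code. The plan is as follows.

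First, since $k\le q$, Lemma~\ref{lem:kerEv} shows that $\Ev$ is injective on $\mathbb F_q[X,Y]_k$. The subspace $W$ is nonzero, so $C_W=\Ev(W)$ is a nonzero subspace of $\mathcal P_k$. Its minimum distance $d(C_W)=\min\{\wt(c): c\in C_W,\ c\neq 0\}$ is therefore well defined.

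Next, every nonzero codeword $c\in C_W$ is in particular a nonzero codeword of $\mathcal P_k$. By Proposition~\ref{prop:Pk-MDS}, $\mathcal P_k$ has minimum distance $q-k+1$, so $\wt(c)\ge q+1-k$. Taking the minimum over all nonzero $c\in C_W$ gives $d(C_W)\ge q+1-k$.

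The argument can also be made self-contained, without citing the MDS property. Take a nonzero $f\in W$. If $f$ vanished at $k+1$ distinct points of $\mathbb P^1(\mathbb F_q)$, then dehomogenizing (and separating out the possible factor $Y$ when $f(1,0)=0$) would give a nonzero polynomial of degree at most $k$ with more than $k$ roots, or of degree at most $k-1$ with more than $k-1$ roots. Either case is a contradiction. Hence $\Ev(f)$ has at most $k$ zero coordinates, and its weight is at least $q+1-k$.

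There is no real obstacle here. The only point needing care is the nonzero hypothesis on $W$, which is what makes $d(C_W)$ meaningful.
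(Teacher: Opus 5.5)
Your proof is correct and follows the paper, which gives no separate proof and treats the corollary as an immediate consequence of Proposition~\ref{prop:Pk-MDS}: since $C_W\subseteq\mathcal P_k$, every nonzero codeword of $C_W$ has weight at least $d(\mathcal P_k)=q+1-k$. Your self-contained root-counting argument is also valid; it simply reproves the bound $d(\mathcal P_k)\ge q+1-k$ from the fact that a nonzero homogeneous polynomial of degree $k$ has at most $k$ zeros on $\mathbb P^1(\mathbb F_q)$.
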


When $k\le q$, the map $\Ev:\mathbb F_q[X,Y]_k\to \mathcal P_k$ is an isomorphism of $\mathbb F_q$-vector spaces. 
Hence any group action on $\mathbb F_q[X,Y]_k$ transfers naturally to an action on $\mathcal P_k$ via $\Ev$. 

\subsection{Group actions, representations, and invariant subspaces}
In this  subsection, we recall the notions of group actions, representations, and invariant subspaces.
\begin{definition}[Group action~\cite{Artin2011}]
    Let $G$ be a group with identity $e$, and let $X$ be a set. An \emph{action} of $G$ on  $X$ is a map
$G\times X\longrightarrow X,\;
(g,x)\longmapsto g\cdot x,$
such that 
\begin{enumerate}[label=(\roman*)]
    \item $e\cdot x=x$ for all $x\in X$.
    \item $(gg')\cdot x=g\cdot(g'\cdot x)$, for all $g,g'\in G$ and all $x\in X$.
\end{enumerate}
\end{definition}
Equivalently, an action of $G$ on $X$ determines a homomorphism $G\to \Sym(X)$. Thus, by identifying $G$ with its image, we may regard $G$ as a permutation group on $X$.
If $G$ acts on $X$, then it induces a natural action on the power set of $X$.  For any subset $S\subseteq X$, we define $g(S):=\{\,g\cdot x:\ x\in S\,\}$. Similarly, if $\mathcal{S}$ is a family of subsets of $X$, define $g(\mathcal{S}):=\{\,g(S):\ S\in \mathcal{S}\,\}$.
A subset $\mathcal B\subseteq \binom{X}{k}$ is said to be \emph{$G$-invariant} if $g(\mathcal B)=\mathcal B$ for all $g\in G$.
We say that $G$ is \emph{$t$-transitive} on $X$ if it acts transitively on the set of ordered $t$-tuples of distinct elements of $X$. 
It is said to be \emph{sharply $t$-transitive} if, for any two ordered $t$-tuples of distinct elements of $X$, there exists a unique element of $G$ mapping the first tuple to the second. Moreover, $G$ is said to be \emph{$t$-homogeneous} on $X$ if it acts transitively on the set of all $t$-subsets of $X$. 
Clearly, a sharply $t$-transitive group is also $t$-transitive, and a $t$-transitive group is always $t$-homogeneous.

If $G$ acts linearly on  a vector space $V$, we call this action a  representation of $G$.
\begin{definition}[Representation~\cite{Artin2011}]
    Let $\mathbb{F}$ be a field and $V$ be an $\mathbb{F}$-vector space. Let $\GL(V)$ denote the general linear group of $V$. A \emph{representation} of $G$ over $\mathbb{F}$ on $V$ is a group homomorphism $\rho:G\to \GL(V)$. When the homomorphism $\rho$ is clear from the context, we often write $gv=\rho(g)(v)$ for $g\in G,\ v\in V$, and simply refer to $V$ itself as a representation of $G$.
\end{definition}
Let $(\rho, V)$ and $(\rho', W)$ be two representations of $G$. They are said to be \emph{isomorphic} if there exists an invertible linear map $T:V\to W$ such that 
\[
T\circ \rho(g)=\rho'(g)\circ T
\qquad\text{for all }g\in G.
\]
A subspace $U\subseteq V$ is called a \emph{$G$-invariant subspace} (or \emph{subrepresentation}) if
$g(U)\subseteq U$ for all $g\in G$.

We now describe the three group actions used in this paper.\\
(i) \textbf{The action of $\PGL_2(\mathbb F_q)$ on $\mathbb P^1(\mathbb F_q)$.} Let $\GL_2(\mathbb F_q)$ be the group of $2\times 2$ invertible matrices over $\mathbb{F}_q$,  and $\PGL_2(\mathbb F_q)=\GL_2(\mathbb F_q)/\{\lambda I_2:\lambda\in\mathbb F_q^\times\}$. 
    For $g \in \PGL_2(\mathbb F_q),$ let $\widetilde g=
\begin{pmatrix}
a & b\\
c & d
\end{pmatrix}$ be a representative of $g$.
Its action on $\mathbb P^1(\mathbb F_q)$ is given by the fractional linear transformation
\begin{align}\label{PGL_2}
g\cdot x=
\begin{cases}
(ax+b)(cx+d)^{-1}, & \text{if } x\in\mathbb F_q,\ cx+d\neq 0,\\
\infty, & \text{if } x\in\mathbb F_q,\ cx+d=0,\\
ac^{-1}, & \text{if } x=\infty,\ c\neq 0,\\
\infty, & \text{if } x=\infty,\ c=0.
\end{cases}
\end{align}
This action is well-defined, since multiplying the matrix by a nonzero scalar does not alter the induced map on $\mathbb{P}^1(\mathbb{F}_q)$.\\
(ii) \textbf{The action of $\GL_2(\mathbb{F}_q)$ on $\mathbb F_q[X,Y]_k$.} 
For any ${g}=
\begin{pmatrix}
a & b\\
c & d
\end{pmatrix}
\in \GL_2(\mathbb F_q)$ and 
$f(X,Y)\in \mathbb F_q[X,Y]_k$,
define
\begin{equation}\label{op_GL_2}
(g\cdot f)(X,Y)=f(dX-bY,-cX+aY).
\end{equation}
Since $f$ is homogeneous of degree $k$, the polynomial $g\cdot f$ again lies in $\mathbb F_q[X,Y]_k$. This defines a representation of $\GL_2(\mathbb F_q)$ on $\mathbb F_q[X,Y]_k$.\\
(iii) \textbf{The induced action of $\GL_2(\mathbb{F}_q)$ on $\mathcal{P}_k$.}
    Assume that $k\le q$. The action of $\GL_2(\mathbb{F}_q)$ on $\mathbb F_q[X,Y]_k$ induces an action on $\mathcal{P}_k$ via $g\cdot \Ev(f):=\Ev(g\cdot f).$
    Let $g=
    \begin{pmatrix}
    a & b\\
    c & d
    \end{pmatrix}
    \in \GL_2(\mathbb{F}_q).$
    For any finite evaluation point $x\in\mathbb{F}_q$, we have
    \begin{align*}
(g\cdot f)(x,1) &= f(dx-b,-cx+a)\\
&=
\begin{cases}
(a-cx)^k\, f\!\left(\dfrac{dx-b}{a-cx},1\right), & \text{if } a-cx\neq 0,\\[2mm]
(dx-b)^k\, f(1,0), & \text{if } a-cx=0.
\end{cases}
\end{align*}
Since
\[
g^{-1}\cdot x=
\begin{cases}
\dfrac{dx-b}{-cx+a}, & \text{if } -cx+a\neq 0,\\[2mm]
\infty, & \text{if } -cx+a=0,
\end{cases}
\]
this may be rewritten as
\[
(g\cdot f)(x,1)=
\begin{cases}
(a-cx)^k\, f(g^{-1}\cdot x,1), & \text{if } g^{-1}\cdot x\neq \infty,\\[2mm]
(dx-b)^k\, f(1,0), & \text{if } g^{-1}\cdot x=\infty.
\end{cases}
\]
    At the point $x=\infty$, we evaluate at $(1,0)$:
    \begin{align*}
(g\cdot f)(1,0) &= f(d,-c)\\
&=
\begin{cases}
(-c)^k\, f\!\left(-\dfrac{d}{c},1\right), & \text{if } c\neq 0,\\[2mm]
d^k\, f(1,0), & \text{if } c=0.
\end{cases}
\end{align*}
Thus $\GL_2(\mathbb F_q)$ acts on $\mathcal P_k$ by monomial transformations: the coordinates are permuted according to the action on $\mathbb P^1(\mathbb F_q)$, and the scalar factors are induced by the homogeneity of degree $k$. We refer to this as the \emph{monomial action} of $\GL_2(\mathbb F_q)$ on $\mathcal P_k$. Moreover, via the isomorphism $\Ev$, the representations $\mathbb F_q[X,Y]_k$ and $\mathcal P_k$ are isomorphic as $\GL_2(\mathbb F_q)$-representations.
It is worth noting that the case $k=q-1$ is particularly distinguished.
Since $v^{q-1}=1$ for every $v\in\mathbb F_q^\times$, all the scalar factors in the above evaluation formulas become $1$ when $k=q-1$.
More precisely, if $g\in \GL_2(\mathbb F_q)$ and $u=(u_x)_{x\in\mathbb P^1(\mathbb F_q)}\in \mathcal P_{q-1}$, then the action simplifies to
\[
(g\cdot u)_x=u_{g^{-1}\cdot x}
\qquad\text{for all }x\in\mathbb P^1(\mathbb F_q).
\]
Thus, the monomial action on $\mathcal P_{q-1}$ reduces exactly to the permutation action on the coordinate positions and hence factors through $\PGL_2(\mathbb F_q)$.

% Finally, if $W\subseteq \Sym^k(\mathbb F_q^2)$ is a $GL_2(\mathbb F_q)$-invariant subspace, then
% \[
% C_W:=\Ev(W)\subseteq \mathcal P_k
% \]
% is a subcode stable under the induced monomial action of $GL_2(\mathbb F_q)$. This observation will be the starting point of the constructions in the next section. :contentReference[oaicite:4]{index=4}
% %\section{Designs from $GL_2(\mathbb{F}_q)$-Invariant Subspaces and Their Associated Codes}
\subsection{Lucas subspaces: a family of $\GL_2(\mathbb F_q)$-invariant subspaces of $\mathbb F_q[X,Y]_k$}
We conclude this section by introducing the Lucas subspaces, a family of $\GL_2(\mathbb F_q)$-invariant subspaces of $\mathbb F_q[X,Y]_k$ with respect to the action defined in \eqref{op_GL_2}. 
They will serve as the main family of invariant subspaces in Sections~\ref{section:Lucas} and~\ref{section:Lucas2}. 
This subsection may be skipped on a first reading and consulted when the Lucas subspaces first appear later in the paper.

We begin with some notation for base-$p$ expansions.
For each integer $i$ with $0\le i\le k$, write the base-$p$ expansions of $k$ and $i$ in the form
$$k=\sum_{r\ge 0}k_rp^r,\qquad i=\sum_{r\ge 0}i_rp^r \qquad\text{for}\;\; 0\le k_r,i_r\le p-1.  $$
Define a partial order on $\{0,1,\cdots,k\}$ by $i\le_p k$ if $ i_r\le k_r$ for all $r$.

\begin{definition}[Lucas subspace]\label{def:Lucas-subspace}
For an integer $k\ge 0$, define the  Lucas subspace $W_k^{\mathrm{Luc}}$ by
$$
W_k^{\mathrm{Luc}}:=\Span_{\mathbb F_q}\{X^{k-i}Y^i:0\le i\le k,\ i\le_p k\}\subseteq \mathbb F_q[X,Y]_k.
$$
\end{definition}
The dimension of $W_k^{\mathrm{Luc}}$ is $\prod_{r\ge 0}(k_r+1)$, since the monomials $X^{k-i}Y^i$ with $i\le_p k$ form a basis of $W_k^{\mathrm{Luc}}$, and the condition $i\le_p k$  is equivalent to $0\le i_r\le k_r$ for all $r$.
In particular, if $k=ap^m-1$ with $1\le a<p$,  then  $k=(p-1)+(p-1)p+\cdots +(p-1)p^{m-1}+(a-1)p^m.$
Hence $k_r=p-1$ for $r<m$ and $k_m=a-1$. 
It follows that every integer $0\le i\le k$ satisfies $i\le_p k$, and therefore
$W_k^{\mathrm{Luc}}=\mathbb F_q[X,Y]_k.$

We use Lucas's theorem to prove that $W_k^{\mathrm{Luc}}$ is $\GL_2(\mathbb F_q)$-invariant.
\begin{lemma}[Lucas\cite{lucas1878}]\label{lem:Lucas}
Let $m=\sum_{r\ge 0}m_rp^r,\ell=\sum_{r\ge 0}\ell_rp^r,$ where $0\le m_r,\ell_r\le p-1$.
Then $$\binom{m}{\ell}\equiv \prod_{r\ge 0}\binom{m_r}{\ell_r}\pmod p.$$
In particular, $\binom{m}{\ell}\not\equiv 0\pmod p$ if and only if $\ell\le_p m.$
\end{lemma}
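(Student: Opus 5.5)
The plan is to prove the congruence by comparing coefficients in a polynomial identity over $\mathbb F_p$. This is the standard generating-function argument for Lucas's theorem.

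The first step is the Frobenius identity $(1+T)^{p}=1+T^{p}$ in $\mathbb F_p[T]$. It holds because $\binom{p}{j}\equiv 0 \pmod p$ for $0<j<p$: the prime $p$ divides $p!$ but not $j!(p-j)!$. By induction on $r$ this gives $(1+T)^{p^r}=1+T^{p^r}$. Writing $m=\sum_r m_rp^r$, we then get in $\mathbb F_p[T]$
\[
(1+T)^m=\prod_{r\ge 0}\bigl((1+T)^{p^r}\bigr)^{m_r}=\prod_{r\ge 0}\bigl(1+T^{p^r}\bigr)^{m_r}=\prod_{r\ge 0}\sum_{j_r=0}^{m_r}\binom{m_r}{j_r}T^{j_rp^r}.
\]

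The next step is to compare the coefficient of $T^\ell$ on both sides. On the left it is $\binom{m}{\ell}\bmod p$. On the right, expanding the product gives monomials $T^{\sum_r j_rp^r}$ with $0\le j_r\le m_r\le p-1$. By uniqueness of base-$p$ expansions, distinct tuples $(j_r)$ give distinct exponents. So $T^\ell$ arises only from $j_r=\ell_r$ for all $r$, and only when $\ell_r\le m_r$ for every $r$. Its coefficient is then $\prod_r\binom{m_r}{\ell_r}$. If some $\ell_r>m_r$, the coefficient is $0$, which agrees with $\prod_r\binom{m_r}{\ell_r}=0$ under the convention $\binom{a}{b}=0$ for $b>a$. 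The same holds if $\ell>m$.

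For the final claim, note that $\binom{m_r}{\ell_r}$ with $0\le\ell_r\le m_r\le p-1$ is a positive integer whose prime factors are all less than $p$, so it is nonzero mod $p$. Hence the product is nonzero mod $p$ if and only if $\ell_r\le m_r$ for all $r$, that is, if and only if $\ell\le_p m$.

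The only point needing care is the coefficient extraction: uniqueness of base-$p$ digits is what guarantees that no cancellation or merging of terms occurs. The bound $j_r\le m_r\le p-1$ ensures the exponents $\sum_r j_rp^r$ are genuine base-$p$ expansions. There is no real obstacle beyond this.
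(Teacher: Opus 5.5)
The paper gives no proof of this lemma: it cites Lucas's 1878 result and uses it as a black box. Your argument is the standard generating-function proof, and it is correct and complete.

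\begin{itemize}
\item The Frobenius identity $(1+T)^{p^r}=1+T^{p^r}$ in $\mathbb F_p[T]$ gives $(1+T)^m=\prod_r(1+T^{p^r})^{m_r}$.
\item The coefficient of $T^\ell$ is extracted legitimately. The bound $0\le j_r\le m_r\le p-1$ makes each exponent $\sum_r j_rp^r$ a genuine base-$p$ expansion, so $T^\ell$ arises only from $j_r=\ell_r$ for all $r$. It does not arise at all when some $\ell_r>m_r$, which includes every case $\ell>m$.
\item The ``in particular'' clause follows because $\binom{m_r}{\ell_r}$ divides $m_r!$, so it has no prime factor exceeding $m_r<p$.
\end{itemize}

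Compared with the bare citation, your route costs a few lines but makes the paper self-contained. It also exposes the mechanism the paper relies on later. The carry-free digit bookkeeping in the proofs of Theorem~\ref{thm:Lucas-invariant} and Lemma~\ref{lem:carry-free-product} rests on the same observation as your coefficient-extraction step: digit-bounded sums $\sum_r j_rp^r$ are honest base-$p$ expansions.

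Your identity also suggests a proof of Theorem~\ref{thm:Lucas-invariant} that avoids binomial coefficients altogether. The space $W_k^{\mathrm{Luc}}$ is spanned by the products $\prod_r f_r(X^{p^r},Y^{p^r})$ with $f_r\in\mathbb F_q[X,Y]_{k_r}$. Since $(dX-bY)^{p^r}=d^{p^r}X^{p^r}-b^{p^r}Y^{p^r}$, each such factor is sent by $\mathrm{GL}_2(\mathbb F_q)$ to a factor of the same form.
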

We now prove the $\GL_2(\mathbb F_q)$-invariance of the Lucas subspaces.
\begin{theorem}\label{thm:Lucas-invariant}
The subspace $W_k^{\mathrm{Luc}}$ is a $\GL_2(\mathbb F_q)$-invariant subspace of $\mathbb F_q[X,Y]_k$.
\end{theorem}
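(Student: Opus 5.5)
It suffices to check that for each $g=\begin{pmatrix}a&b\\c&d\end{pmatrix}\in\GL_2(\mathbb F_q)$ and each basis monomial $X^{k-i}Y^i$ with $i\le_p k$, the polynomial $g\cdot (X^{k-i}Y^i)=(dX-bY)^{k-i}(-cX+aY)^{i}$ again lies in $W_k^{\mathrm{Luc}}$. Linearity of the action then gives $g(W_k^{\mathrm{Luc}})\subseteq W_k^{\mathrm{Luc}}$ for every $g$.

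The key tool is the Frobenius property in characteristic $p$. Write $k=\sum_r k_rp^r$ and $i=\sum_r i_rp^r$ with $i_r\le k_r$. Then $k-i=\sum_r (k_r-i_r)p^r$ is also a digitwise expansion, with no borrows. For any linear forms $L_1,L_2$ we get
\[
L_1^{k-i}L_2^{i}=\prod_{r\ge 0}\bigl(L_1^{p^r}\bigr)^{k_r-i_r}\bigl(L_2^{p^r}\bigr)^{i_r},
\]
and $L^{p^r}=(\alpha X+\beta Y)^{p^r}=\alpha^{p^r}X^{p^r}+\beta^{p^r}Y^{p^r}$. So the $r$-th factor is a homogeneous polynomial of degree $k_r$ in $X^{p^r},Y^{p^r}$. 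It is therefore a combination of monomials $X^{p^r(k_r-j_r)}Y^{p^rj_r}$ with $0\le j_r\le k_r$.

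Next we expand the product over $r$. Each resulting monomial has the form $X^{k-j}Y^{j}$ with $j=\sum_r j_rp^r$, where each $0\le j_r\le k_r\le p-1$. This is a genuine base-$p$ expansion of $j$, so $j\le_p k$. Hence every monomial appearing lies in the spanning set of $W_k^{\mathrm{Luc}}$, which proves the claim.

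As an alternative, one could reduce to generators of $\GL_2$: diagonal matrices, the swap, and the elementary matrices. Diagonal matrices and the swap clearly preserve the set $\{i: i\le_p k\}$, since $i\le_p k\iff k-i\le_p k$. For $X\mapsto X+tY$, expanding $(X+tY)^{k-i}$ yields coefficients $\binom{k-i}{s}$, which are nonzero only if $s\le_p k-i$ by Lemma~\ref{lem:Lucas}. One then needs $i+s\le_p k$, which holds because $s\le_p k-i$ and $i\le_p k$ add without carries.

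The main obstacle, in either version, is this no-carry digit bookkeeping. The Frobenius factorization above handles it most cleanly, so that is the route I would write up.
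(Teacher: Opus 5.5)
Your proof is correct, and it takes a different route from the paper's.

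The paper's proof runs in three steps:
\begin{itemize}
\item it expands $(dX-bY)^{k-i}(-cX+aY)^i$ by the binomial theorem;
\item it uses Lucas's theorem to conclude that a surviving term indexed by $(u,v)$ has $u\le_p k-i$ and $v\le_p i$;
\item it argues that $k-i$ has no borrows and $u+v$ has no carries, so $u+v\le_p k$.
\end{itemize}

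You instead split the exponents digit by digit first and apply Frobenius, $L^{p^r}=\alpha^{p^r}X^{p^r}+\beta^{p^r}Y^{p^r}$. Then $L_1^{k-i}L_2^{i}$ becomes a product over $r$ of forms of degree $k_r$ in $X^{p^r},Y^{p^r}$. Every surviving monomial therefore has $Y$-exponent $\sum_r j_rp^r$ with $0\le j_r\le k_r\le p-1$. Since this is already a base-$p$ expansion, $j\le_p k$ follows at once, with no appeal to Lucas's theorem and no carry bookkeeping. The identity $(A+B)^n=\prod_r(A^{p^r}+B^{p^r})^{n_r}$ is the standard proof of Lucas's theorem, so in effect you inline it.

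Your route also gives a more structural picture. $W_k^{\mathrm{Luc}}$ is exactly the span of products $\prod_r F_r(X^{p^r},Y^{p^r})$ with $F_r$ homogeneous of degree $k_r$. Each $g$ sends such a factor to another one of the same shape, via a Frobenius-twisted linear substitution, so invariance is visible at a glance.

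The paper's version has its own advantages. It reuses Lucas's theorem, which the paper needs anyway, for example for the symmetry $a\le_p k\iff k-a\le_p k$ and for the lower-symmetric-power interpretation. Its no-carry argument is also the one recycled in Lemma~\ref{lem:carry-free-product}. Both arguments work over any field of characteristic $p$, which is what Theorem~\ref{thm:Lucas-Cayley} needs.

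Your sketched alternative via generators is essentially the paper's argument restricted to a single unipotent matrix.
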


\begin{proof}
Let $f_i(X,Y)=X^{k-i}Y^i$ with $i\le_p k$. For any $g=\begin{pmatrix}a&b\\ c&d\end{pmatrix}\in \GL_2(\mathbb F_q),$
we have
\begin{align*}
    (g\cdot f_i)(X,Y)&=(dX-bY)^{k-i}(-cX+aY)^i\\
    &=\sum_{u=0}^{k-i}\sum_{v=0}^i \binom{k-i}{u}\binom{i}{v}a^v(-b)^u(-c)^{i-v} d^{k-i-u} X^{k-(u+v)}Y^{u+v}
\end{align*}
It suffices to show that whenever the coefficient of $X^{k-j}Y^{j}$ is nonzero, one has $j\le_p k$.
So assume that the coefficient of $X^{k-j}Y^j$ is nonzero. 
Then there exists a pair $(u,v)$ with $u+v=j$ such that the corresponding summand is nonzero. In particular,
$\binom{k-i}{u}\binom{i}{v}\not\equiv 0\pmod p.$
By Lemma~\ref{lem:Lucas}, this implies $u\le_p k-i$ and $v\le_p i$.
Since $i\le_p k$, the subtraction $k-i$ involves no borrowing in base $p$.
So $u_r\le(k-i)_r= k_r-i_r$ and $v_r\le i_r$ for all $r$.
Hence
$$u_r+v_r\le (k_r-i_r)+i_r=k_r<p$$ for all $r$. 
Thus the addition $j=u+v$ involves no carrying in base $p$, and therefore $j_r=u_r+v_r\le k_r$ for all $r$. 
Thus every monomial occurring in $g\cdot f_i$ is of the form $X^{k-j}Y^j$ with $j\le_p k$. Hence $g\cdot f_i\in W_k^{\mathrm{Luc}}$.
Since the monomials $f_i$ with $i\le_p k$ span $W_k^{\mathrm{Luc}}$, it follows that
$g\cdot W_k^{\mathrm{Luc}}\subseteq W_k^{\mathrm{Luc}}$ for all $g\in \GL_2(\mathbb F_q)$. This proves the theorem.
\end{proof}

The subspace $W_k^{\mathrm{Luc}}$ also admits a natural representation-theoretic interpretation. Let
$E=\mathbb F_q^2$ with standard basis $\{e_1,e_2\}$, and let $\{X,Y\}\subset E^*$ be the dual basis.
Then the space $\mathbb F_q[X,Y]_k$ of homogeneous polynomials of degree $k$ may be identified with
the usual symmetric power $\Sym^k(E^*)$.

Following the terminology of McDowell and Wildon \cite{mcdowell2022modular}, define the \emph{lower symmetric power} $\Sym_k(E^*):=((E^*)^{\otimes k})^{S_k},$
that is, the subspace of invariants under the place permutation action of the symmetric group $S_k$.
Consider the canonical composite
\[
\Sym_k(E^*)\hookrightarrow (E^*)^{\otimes k}\twoheadrightarrow \Sym^k(E^*)
\cong \mathbb F_q[X,Y]_k.
\]

For $0\le a\le k$, let $(X^{\otimes k-a}\otimes Y^{\otimes a})_{\mathrm{sym}}$
denote the sum of all distinct permutations of $X^{\otimes k-a}\otimes Y^{\otimes a}$.
Under the above map, one has
$(X^{\otimes k-a}\otimes Y^{\otimes a})_{\mathrm{sym}}
\longmapsto
\binom{k}{a}X^{k-a}Y^a.$
It follows that the image is $\Span\{X^{k-a}Y^a:\binom{k}{a}\not\equiv 0\pmod p\}.$
By Lemma~\ref{lem:Lucas}, this is precisely $\Span\{X^{k-a}Y^a:a\le_p k\}=W_k^{\mathrm{Luc}}.$
Thus $W_k^{\mathrm{Luc}}$ is not an ad hoc construction, but the image of the natural map from the
lower symmetric power to the usual symmetric power.

\section{A general framework for constructing 3-designs from $\GL_2(\mathbb F_q)$-invariant subspaces}\label{section:Construction}
In this section, we develop a general framework for constructing $3$-designs from $\GL_2(\mathbb F_q)$-invariant subspaces. We first define the associated block families and show that each nonempty block family gives rise to a $3$-design. We then relate these constructions to associated codes and their duals, and finally reformulate the block conditions in unit-circle coordinates via the Cayley transform.

\subsection{3-designs from invariant subspaces}
We begin with two basic lemmas.
\begin{lemma}({\cite[Theorem 1.1]{BethJungnickelLenz1999}})\label{lemma:design}
Let $G$ be a permutation group on a finite set $X$ with $|X|=v$. Suppose that $G$ is $t$-homogeneous on $X$, and let $\mathcal B$ be a nonempty $G$-invariant subset of $\binom{X}{k}$. Then $(X,\mathcal B)$ is a $t$-$(v,k,\lambda)$ design for some $\lambda$. Moreover, $G$ acts as an automorphism group of this design.
\end{lemma}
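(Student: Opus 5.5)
The argument is a double count using the $t$-homogeneity of $G$. First I record the standard identity: for a block $B\in\mathcal B$ and $g\in G$, a $t$-subset $T$ lies in $B$ if and only if $g(T)$ lies in $g(B)$. For each $t$-subset $T\subseteq X$ I set
\[
\lambda(T):=\#\{B\in\mathcal B: T\subseteq B\}.
\]
The goal is to show that $\lambda(T)$ is independent of $T$ and is a positive integer.

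\emph{Invariance step.} Fix $T$ and $g\in G$. The map $B\mapsto g(B)$ is a bijection of $\mathcal B$ onto itself, because $\mathcal B$ is $G$-invariant and $g$ acts bijectively on $\binom{X}{k}$. It carries the blocks containing $T$ onto the blocks containing $g(T)$, so $\lambda(T)=\lambda(g(T))$. Since $G$ is $t$-homogeneous, any two $t$-subsets $T,T'$ satisfy $T'=g(T)$ for some $g\in G$. Hence $\lambda(T)=\lambda(T')=:\lambda$ is constant.

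\emph{Positivity and counting.} I double count the pairs $(T,B)$ with $T\subseteq B$, $|T|=t$ and $B\in\mathcal B$. This gives $b\binom{k}{t}=\lambda\binom{v}{t}$, which is \eqref{solve_b}. Because $\mathcal B\neq\varnothing$ we have $b\ge 1$, and tacitly $t\le k\le v$, so $\lambda>0$. Simplicity of the design is automatic because $\mathcal B$ is a set. Therefore $(X,\mathcal B)$ is a $t$-$(v,k,\lambda)$ design.

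\emph{Automorphisms.} Each $g\in G$ permutes $X$ and maps $\mathcal B$ onto $\mathcal B$, so $g$ is an automorphism of the design by definition. No step poses a real obstacle; the only care needed is checking that $B\mapsto g(B)$ is a bijection on $\mathcal B$, and this uses $g(\mathcal B)=\mathcal B$ (not just inclusion) together with the invertibility of $g$.
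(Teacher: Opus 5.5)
Your argument is correct: the bijection $B\mapsto g(B)$ of $\mathcal B$ onto itself gives $\lambda(T)=\lambda(g(T))$, and $t$-homogeneity then makes $\lambda(T)$ constant. That is all the statement requires; positivity of $\lambda$ is an extra that holds when $t\le k$. The paper gives no proof of its own and simply cites Theorem 1.1 of Beth, Jungnickel and Lenz, whose standard proof is this same transitivity-plus-double-counting argument, so your route matches the intended one.
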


\begin{lemma}[{\cite[Corollary 2.5]{cameron2000notes}}]\label{lemma:3-transitive}
The group $\PGL_2(\mathbb F_q)$, acting on $\mathbb P^1(\mathbb F_q)$ as in \eqref{PGL_2}, is sharply $3$-transitive.
In particular, it acts $3$-homogeneously on $\mathbb P^1(\mathbb F_q)$.
\end{lemma}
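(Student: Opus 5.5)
We prove the statement of Lemma~\ref{lemma:3-transitive} directly from the formula \eqref{PGL_2}. First, existence: it suffices to show that every ordered triple $(x_1,x_2,x_3)$ of distinct points of $\mathbb P^1(\mathbb F_q)$ can be sent to the standard triple $(\infty,0,1)$. Work with homogeneous coordinates: write $x_i=[u_i:w_i]$, with vectors $v_i=(u_i,w_i)^T\in\mathbb F_q^2$. Since $x_1\neq x_2$, the vectors $v_1,v_2$ are linearly independent. Hence $v_3=\alpha v_1+\beta v_2$ for some $\alpha,\beta\in\mathbb F_q$, and both coefficients are nonzero because $x_3\neq x_1,x_2$. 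The matrix $M$ with columns $\alpha v_1,\beta v_2$ is invertible. It sends $e_1\mapsto \alpha v_1$, $e_2\mapsto\beta v_2$ and $e_1+e_2\mapsto v_3$. So, as a fractional linear map, $M$ sends $(\infty,0,1)$ to $(x_1,x_2,x_3)$; here $[1:0]=\infty$, $[0:1]=0$ and $[1:1]=1$. We must check that the matrix action on column vectors matches the case formula \eqref{PGL_2}. This is a routine verification of the four cases: for example, if $x=\infty$ and $c\neq 0$, then $\widetilde g(1,0)^T=(a,c)^T$, which is the point $ac^{-1}$. Composing one such map with the inverse of another gives transitivity on ordered triples.

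Second, sharpness, which is the main point. Suppose $g\in\PGL_2(\mathbb F_q)$ fixes $\infty$, $0$ and $1$, and let $\widetilde g=\begin{pmatrix}a&b\\c&d\end{pmatrix}$ be a representative. From $g\cdot\infty=\infty$ we get $c=0$. From $g\cdot 0=0$ we get $b d^{-1}=0$, so $b=0$. From $g\cdot 1=1$ we get $a=d$. Hence $\widetilde g$ is a scalar matrix and $g$ is the identity in $\PGL_2(\mathbb F_q)$. Now suppose $g,h$ both map a triple $T$ to a triple $T'$. Then $\phi_{T'}^{-1}h^{-1}g\,\phi_{T'}$ fixes $(\infty,0,1)$, where $\phi_{T'}$ sends $(\infty,0,1)$ to $T'$. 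So it is trivial, and therefore $g=h$.

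Finally, $3$-homogeneity follows because any $3$-transitive action is transitive on $3$-subsets. Given two $3$-subsets, order each arbitrarily and map one ordered triple to the other. The only mildly delicate step is confirming that \eqref{PGL_2} is really the projectivized linear action on column vectors, including at $\infty$. Once that is checked, everything reduces to linear algebra in $\mathbb F_q^2$.
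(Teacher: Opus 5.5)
Your proof is correct, and since the paper gives no argument of its own here (it only cites Corollary~2.5 of Cameron's notes), your self-contained proof is essentially the standard argument behind that citation: an adapted basis sends $(\infty,0,1)$ to any ordered triple, and the stabilizer of $(\infty,0,1)$ consists only of scalar matrices. One small index slip: if $g$ and $h$ both send $T$ to $T'$, then $h^{-1}g$ fixes $T$ pointwise, so the element fixing $(\infty,0,1)$ is $\phi_T^{-1}h^{-1}g\,\phi_T$ (equivalently $\phi_{T'}^{-1}gh^{-1}\phi_{T'}$), not $\phi_{T'}^{-1}h^{-1}g\,\phi_{T'}$.
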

Let $X=\mathbb P^1(\mathbb F_q)=\mathbb F_q\cup\{\infty\},$ $G=\PGL_2(\mathbb F_q),$ and $ V=\mathbb F_q[X,Y]_k$.
Let $W$ be a $\GL_2(\mathbb F_q)$-invariant subspace of $V$. For each $k$-subset $S\subseteq X$, define the polynomial
$$
F_S(X,Y):=\prod_{t\in S}(X-tY)\in V,
$$
where the linear factor corresponding to $t=\infty$ is taken to be $Y$.
Define the associated block set
$$
\mathcal B_W:=\left\{\,S\in \binom{X}{k}: F_S\in W\,\right\}.
$$
In what follows, we always assume that $k\le q+1$.
We now show that, whenever $\mathcal B_W$ is nonempty, the incidence structure $(X,\mathcal B_W)$ is a $3$-design.
\begin{proposition}\label{prop:BW-design}
If $\mathcal B_W\neq\varnothing$, then $(X,\mathcal B_W)$ is a $3$-$(q+1,k,\lambda)$ design for some $\lambda$.
\end{proposition}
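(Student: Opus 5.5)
The plan is to invoke Lemma~\ref{lemma:design} with $G=\PGL_2(\mathbb F_q)$ acting on $X=\mathbb P^1(\mathbb F_q)$. By Lemma~\ref{lemma:3-transitive}, $G$ is $3$-homogeneous on $X$, with $|X|=q+1$. So once $\mathcal B_W$ is known to be a nonempty $G$-invariant subset of $\binom{X}{k}$, the conclusion follows, with $G$ acting as an automorphism group. The whole proof therefore reduces to showing that $S\in\mathcal B_W$ implies $g(S)\in\mathcal B_W$ for every $g\in G$. Since $G$ is finite and $g\mapsto g(\cdot)$ is a bijection on $\binom{X}{k}$, the inclusion $g(\mathcal B_W)\subseteq\mathcal B_W$ for all $g$ gives equality.

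The key step is a transformation formula for $F_S$ under the action \eqref{op_GL_2}. Fix $g\in G$ with representative $\widetilde g=\begin{pmatrix}a&b\\c&d\end{pmatrix}$, and set $h:=\widetilde g^{-1}\in\GL_2(\mathbb F_q)$. Up to the scalar $(ad-bc)^{-1}$, $h$ is $\begin{pmatrix}d&-b\\-c&a\end{pmatrix}$, so by \eqref{op_GL_2} $h$ sends $f(X,Y)$ to a nonzero scalar times $f(aX+bY,cX+dY)$. I will check that for each single point $t\in X$, with linear form $\ell_t=X-tY$ (or $\ell_\infty=Y$), one has
\[
\ell_t(aX+bY,\,cX+dY)=\mu_t\,\ell_{g\cdot t}(X,Y)
\]
for some $\mu_t\in\mathbb F_q^\times$. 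For finite $t$ with $ct\neq a$, the left side is $(a-ct)X+(b-dt)Y$. This is a nonzero multiple of $X-sY$, where $s=(dt-b)/(a-ct)$. At this point I must be careful about whether $s$ is $g\cdot t$ or $g^{-1}\cdot t$. If it is $g^{-1}\cdot t$, I will simply apply the argument with $\widetilde g$ in place of $\widetilde g^{-1}$, since only invariance under the whole group matters. The remaining cases, $ct=a$ (image point $\infty$) and $t=\infty$ (where $\ell_\infty(aX+bY,cX+dY)=cX+dY$), are checked directly against the four cases of \eqref{PGL_2}. The point is simply that the zero of the transformed linear form is the image of $t$ under the corresponding Möbius map.

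Multiplying over $t\in S$ then gives, for a suitable element $h'\in\GL_2(\mathbb F_q)$ lifting $g$ or $g^{-1}$,
\[
h'\cdot F_S=\mu\,F_{g(S)},\qquad \mu\in\mathbb F_q^\times.
\]
Here I use that $t\mapsto g\cdot t$ is injective, so $g(S)$ is again a $k$-subset. Because $W$ is $\GL_2(\mathbb F_q)$-invariant, $F_S\in W$ implies $h'\cdot F_S\in W$, and hence $F_{g(S)}=\mu^{-1}h'\cdot F_S\in W$. Thus $g(S)\in\mathcal B_W$. The choice of representative for $g$ is harmless: changing it rescales everything by a nonzero constant, and $W$ is a subspace, so membership is unaffected.

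I expect the main obstacle to be the bookkeeping in the linear-form identity. This means matching the contragredient-looking action \eqref{op_GL_2} with the fractional linear action \eqref{PGL_2}, including the convention for the point $\infty$. However, this only affects whether $g$ or $g^{-1}$ appears, which is immaterial for $G$-invariance. With invariance established, Lemma~\ref{lemma:design} gives that $(X,\mathcal B_W)$ is a $3$-$(q+1,k,\lambda)$ design.
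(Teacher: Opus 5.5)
Your proof is correct and follows the paper's argument. Both reduce, via Lemma~\ref{lemma:design} and Lemma~\ref{lemma:3-transitive}, to the $\PGL_2(\mathbb F_q)$-invariance of $\mathcal B_W$, and both prove that invariance by checking that each linear factor of $F_S$ is sent to a nonzero multiple of the linear factor at the image point.

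Your hedge about $g$ versus $g^{-1}$ resolves cleanly. With $\widetilde g^{-1}$ you obtain $F_{g^{-1}(S)}$, which suffices. The paper instead applies $\widetilde g$ itself through \eqref{op_GL_2}: there $(dX-bY)-t(-cX+aY)=(ct+d)X-(at+b)Y$ vanishes exactly at $g\cdot t$, so $\widetilde g\cdot F_S=\mu F_{g(S)}$ holds directly.
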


\begin{proof}
By Lemmas~\ref{lemma:design} and \ref{lemma:3-transitive}, it suffices to show that $\mathcal B_W$ is $G$-invariant. For any $S\in\mathcal B_W$ and $g\in \PGL_2(\mathbb F_q)$, choose a representative $\widetilde g=\begin{pmatrix} a&b\\ c&d\end{pmatrix}\in \GL_2(\mathbb F_q)$.
If $S\subseteq \mathbb F_q$, then
\begin{align*}
    \widetilde g\cdot F_S&=\prod_{t\in S}((dX-bY)-t(-cX+aY))\\
    &=\prod_{t\in S}((ct+d)X-(at+b)Y)\\
    &=\mu F_{g(S)},
\end{align*}
for some $\mu\in\mathbb F_q^*$. If $\infty\in S$, then $F_S=Y\prod_{t\in S\setminus\{\infty\}}(X-tY),$ and one checks similarly that $\widetilde g\cdot F_S=\mu F_{g(S)}$ for some $\mu\in\mathbb F_q^*$. Since $\widetilde g\cdot F_S\in W$ and $W$ is an $\mathbb F_q$-subspace, we conclude that $F_{g(S)}\in W$. Hence $g(S)\in \mathcal B_W$, and therefore $\mathcal B_W$ is $G$-invariant.
\end{proof}

The next proposition gives several equivalent characterizations of the nonemptiness of $\mathcal B_W$.
\begin{proposition}\label{prop:BW-nonempty}
Assume that $k\le q+1$. Then the following are equivalent:
\begin{enumerate}[label=(\roman*)]
    \item $\mathcal B_W\neq\varnothing$;
    \item there exists a $k$-subset $S\subseteq X$ such that $F_S\in W$;
    \item $W$ contains a nonzero polynomial $f$ vanishing at exactly $k$ distinct points of $X$.
\end{enumerate}
If, in addition, $k\le q$, then the above conditions are also equivalent to
\begin{enumerate}[label=(\roman*),start=4]
    \item $d(C_W)=q+1-k$.
\end{enumerate}
%Moreover, whenever these conditions hold, $(X,\mathcal B_W)$ is a $3$-$(q+1,k,\lambda)$ design for some $\lambda$.
\end{proposition}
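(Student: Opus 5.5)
(i)$\Leftrightarrow$(ii) is immediate from the definition of $\mathcal B_W$. (ii)$\Rightarrow$(iii) is also direct: $f=F_S$ is nonzero, lies in $W$, and vanishes at every point of $S$. It vanishes nowhere else, because a point $s\notin S$ (including $\infty$) makes each linear factor $X-tY$ (or $Y$) nonzero at $s$.

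For (iii)$\Rightarrow$(ii), I would start from a nonzero $f\in W$ vanishing at exactly $k$ distinct points, forming a set $S$. The plan is to show $F_S\mid f$ by peeling off linear factors. If $f(t,1)=0$ with $t\in\mathbb F_q$, then $X-tY$ divides $f$, since dehomogenizing at $Y=1$ and rehomogenizing respects this. If $f(1,0)=0$, then the $X^k$ coefficient vanishes, so $Y\mid f$. The linear forms $X-tY$ and $Y$ for distinct points are pairwise non-associate irreducibles in the UFD $\mathbb F_q[X,Y]$. Hence their product $F_S$, which has degree $k$, divides $f$, which also has degree $k$. Therefore $f=\mu F_S$ with $\mu\in\mathbb F_q^\times$, and $F_S\in W$ because $W$ is a subspace.

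The step needing the most care is this divisibility argument, especially at $\infty$ and in the degenerate case $k=q+1$. In the latter case $f$ may a priori lie partly in $\ker\Ev$. However, the factorization argument never used $k\le q$, so it goes through unchanged.

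For (iv), assume $k\le q$, so that $\Ev$ is injective by Lemma~\ref{lem:kerEv}. The weight of $\Ev(f)$ equals $q+1$ minus the number of zeros of $f$ on $X$. By Corollary~\ref{rem:distance-bound}, a nonzero $f$ has at most $k$ zeros. So $d(C_W)=q+1-k$ holds exactly when some nonzero $f\in W$ has exactly $k$ zeros, which is (iii).
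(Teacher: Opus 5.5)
Your proposal is correct and follows the same route as the paper: (i)$\Leftrightarrow$(ii) by definition, (iii)$\Rightarrow$(ii) by showing $f=\mu F_S$, and (iii)$\Leftrightarrow$(iv) via $\wt(\Ev(f))=(q+1)-\#\{\text{zeros of } f\}$ together with Corollary~\ref{rem:distance-bound} and the injectivity of $\Ev$. Your UFD divisibility argument, including the separate treatment of $\infty$, just makes explicit the step the paper compresses into ``$f$ has degree $k$ and all its zeros in $X$ are simple''.
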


\begin{proof}
The equivalence of $(i)$ and $(ii)$ is immediate from the definition of $\mathcal B_W$.
If $(ii)$ holds, then $F_S\in W$ for some $S\in\binom{X}{k}$, and by construction, $F_S$ vanishes exactly at the $k$ distinct points of $S$. 
Hence $(ii)\Rightarrow(iii)$.
Conversely, assume $(iii)$, and let $f\in W$ be a nonzero homogeneous polynomial of degree $k$ vanishing at exactly $k$ distinct points of $X$, say $S\subseteq X$. 
Since $f$ has degree $k$ and all its zeros in $X$ are simple, it follows that $f=\mu F_S$ for some $\mu\in\mathbb F_q^\times$. 
Therefore $F_S\in W$, and hence $(iii)\Rightarrow(ii)$. 
This proves the equivalence of $(i)$, $(ii)$ and $(iii)$.

Now assume  that $k\le q$. 
We show that $(iii)$ and $(iv)$ are equivalent. 
Let $0\neq f\in W$. Then the zero coordinates of $\Ev(f)\in C_W$ are precisely the points of $X$ at which $f$ vanishes. 
Hence
$$
\wt(\Ev(f))=(q+1)-\#\{x\in X:f(x)=0\}.
$$
Therefore, $f$ vanishes at exactly $k$ distinct points of $X$ if and only if $\Ev(f)$ has weight $q+1-k$. 
By Corollary~\ref{rem:distance-bound}, every nonzero codeword of $C_W$ has weight at least $q+1-k$. 
Consequently, $C_W$ contains a codeword of weight $q+1-k$ if and only if $d(C_W)=q+1-k.$
Thus $(iii)$ and $(iv)$ are equivalent, completing the proof.
\end{proof}

\subsection{More 3-designs from related codes}
We next show that the same invariant-subspace construction also gives rise to $3$-designs from related codes. More precisely, we consider the support designs arising from minimum-weight codewords of $C_W$ and fixed-weight codewords of $C_W^\perp$.

For a codeword $c=(c_x)_{x\in X}$, define $\Supp(c):=\{\,x\in X:c_x\neq 0\,\}.$ We have the following lemma.
\begin{lemma}\label{cor:minwt-design}
Assume that $k\le q$. If $\mathcal B_W\neq\varnothing$, equivalently if $d(C_W)=q+1-k$, then the supports of the minimum-weight codewords in $C_W$ form a $3$-$(q+1,q+1-k,\lambda)$ design for some $\lambda$.
\end{lemma}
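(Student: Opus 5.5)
The plan is to identify the family of supports of minimum-weight codewords of $C_W$ with the family of complements of blocks in $\mathcal B_W$. Complements of the blocks of a $3$-design form a $3$-design. Alternatively, and more directly, the support family is $\PGL_2(\mathbb F_q)$-invariant, so Lemma~\ref{lemma:design} applies.

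First I establish the correspondence. Let $c=\Ev(f)$ with $0\neq f\in W$ have weight $q+1-k$. By the computation in the proof of Proposition~\ref{prop:BW-nonempty}, $f$ vanishes at exactly $k$ points of $X$; call this set $S$. Since $f$ is homogeneous of degree $k$ with $k$ distinct zeros on $\mathbb P^1(\mathbb F_q)$, we have $f=\mu F_S$ for some $\mu\in\mathbb F_q^\times$. Hence $F_S\in W$, so $S\in\mathcal B_W$ and $\Supp(c)=X\setminus S$. Conversely, for $S\in\mathcal B_W$ the codeword $\Ev(F_S)$ has support $X\setminus S$ and weight $q+1-k$. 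Thus the family of minimum-weight supports is exactly
\[
\mathcal B_W^{c}:=\{X\setminus S: S\in\mathcal B_W\}.
\]
It is nonempty because $\mathcal B_W\neq\varnothing$, and it consists of $(q+1-k)$-subsets. Distinct blocks give distinct complements, so no block is repeated and the design is simple.

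Next I check invariance. By the proof of Proposition~\ref{prop:BW-design}, $\mathcal B_W$ is $G$-invariant for $G=\PGL_2(\mathbb F_q)$. Since $g(X\setminus S)=X\setminus g(S)$, the family $\mathcal B_W^{c}$ is also $G$-invariant. Lemma~\ref{lemma:3-transitive} says $G$ is $3$-homogeneous on $X$, so Lemma~\ref{lemma:design} gives that $(X,\mathcal B_W^{c})$ is a $3$-$(q+1,q+1-k,\lambda)$ design. As an alternative check, the monomial action of $\GL_2(\mathbb F_q)$ on $C_W$ permutes supports according to the $\PGL_2$ action on coordinates, which gives the invariance directly.

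The only step needing care is the identification $f=\mu F_S$ for a minimum-weight codeword, in particular the point $\infty$. If $\infty\in S$, then the coefficient of $X^k$ vanishes, so $Y\mid f$. The remaining $k-1$ finite roots each give a linear factor $X-tY$. Comparing degrees then forces $f$ to be a scalar multiple of $F_S$. This is already used in Proposition~\ref{prop:BW-nonempty}(iii)$\Rightarrow$(ii), so I expect no real obstacle.
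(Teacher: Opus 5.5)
Your proposal is correct and follows the paper's route: it identifies the minimum-weight supports of $C_W$ with the complements of the blocks of $\mathcal B_W$. The paper then concludes from the fact that complements of a $t$-design form a $t$-design, whereas your main line applies Lemma~\ref{lemma:design} to the $\PGL_2(\mathbb F_q)$-invariant complement family; both finishes are equally valid.
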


\begin{proof}
By Proposition~\ref{prop:BW-nonempty}, the hypothesis implies that $(X,\mathcal B_W)$ is a $3$-$(q+1,k,\mu)$ design for some $\mu$. For each minimum-weight codeword $c=\Ev(f)\in C_W$, its zero set $Z(c):=\{\,x\in X:c_x=0\,\}$ is a block of $\mathcal B_W$. Since $ \Supp(c)=X\setminus Z(c)$, the supports of the minimum-weight codewords are precisely the complements of the blocks in $\mathcal B_W$. Since the complements of the blocks in a $t$-design again form a $t$-design, the conclusion follows.
\end{proof}

% \begin{remark}
% Later we shall see that Corollary~\ref{cor:minwt-design} recovers Theorem~40 of \cite{tang2020infinite} and Theorems~4--5 of \cite{xu2024infinite} as special cases.
% \end{remark}

We now turn to the dual code $C_W^\perp$.
\begin{proposition}\label{prop:dual-design}
Assume that $k\le q$. Then the following hold:
\begin{enumerate}[label=(\roman*)]
    \item $d(C_W^\perp)\le k+2$.
    \item For every integer $w$ with $3\le w\le q+1$, if $C_W^\perp$ contains codewords of weight $w$, then the supports of all codewords of weight $w$ in $C_W^\perp$ form a $3$-$(q+1,w,\lambda_w)$ design for some $\lambda_w$.
\end{enumerate}
In particular, if $d(C_W^\perp)\ge 3$, then the supports of the minimum-weight codewords in $C_W^\perp$ form a $3$-$(q+1,d(C_W^\perp),\lambda)$ design for some $\lambda$.
\end{proposition}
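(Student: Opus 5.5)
For part~(i), I would compare $C_W^\perp$ with the dual of the full projective Reed--Solomon code. Since $W\subseteq \mathbb F_q[X,Y]_k$, we have $C_W\subseteq \mathcal P_k$, and hence $\mathcal P_k^\perp\subseteq C_W^\perp$.

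By Proposition~\ref{prop:Pk-MDS}, $\mathcal P_k$ is a $[q+1,k+1,q+1-k]$ MDS code. The dual of an MDS code is MDS, so for $k<q$ the code $\mathcal P_k^\perp$ is a nonzero $[q+1,q-k,k+2]$ code. It therefore contains a codeword of weight $k+2$, which lies in $C_W^\perp$, giving $d(C_W^\perp)\le k+2$.

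The boundary case $k=q$ needs separate treatment, because then $\mathcal P_q=\mathbb F_q^{q+1}$ and $\mathcal P_q^\perp=0$. Here I would use the Singleton bound directly. If $C_W^\perp\neq 0$, it gives $d(C_W^\perp)\le q+1=k+1\le k+2$. If $C_W^\perp=0$, the statement is vacuous, under the usual convention that the minimum distance of the zero code is undefined or infinite and no bound is claimed.

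For part~(ii), the plan is to show that the family
\[
\mathcal B_w:=\{\Supp(c): c\in C_W^\perp,\ \wt(c)=w\}
\]
is $\PGL_2(\mathbb F_q)$-invariant. Lemma~\ref{lemma:design} together with Lemma~\ref{lemma:3-transitive} then gives a $3$-$(q+1,w,\lambda_w)$ design whenever $\mathcal B_w\neq\varnothing$. The hypothesis $w\ge 3$ guarantees that the blocks have size at least the strength $t=3$.

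The key input is the monomial action described in Section~2. Each $g\in\GL_2(\mathbb F_q)$ acts on $\mathcal P_k$ by a monomial matrix $M_g=D_gP_g$. Here $P_g$ is the permutation matrix of the image $\bar g\in\PGL_2(\mathbb F_q)$ acting on the coordinates $X=\mathbb P^1(\mathbb F_q)$, and $D_g$ is an invertible diagonal matrix. Since $W$ is $\GL_2(\mathbb F_q)$-invariant, $M_gC_W=C_W$.

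I would then show that $C_W^\perp$ is stable under the monomial matrices $M_g^{-T}$. For $y\in C_W^\perp$ and $c\in C_W$,
\[
\langle c, M_g^{-T}y\rangle=\langle M_g^{-1}c,y\rangle=0,
\]
because $M_g^{-1}c\in C_W$. Now $M_g^{-T}=D_g^{-1}P_g$, since $P_g^{-T}=P_g$ for a permutation matrix. So $M_g^{-T}$ is monomial with the same underlying permutation $\bar g$ of coordinates. A monomial map preserves Hamming weight and sends the support $S$ of $y$ to $\bar g(S)$. Consequently $\bar g(\mathcal B_w)\subseteq\mathcal B_w$ for every $\bar g\in\PGL_2(\mathbb F_q)$, and equality follows from finiteness.

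The final assertion follows by taking $w=d(C_W^\perp)$, which lies in the range $3\le w\le q+1$ by hypothesis.

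The main point needing care is this transfer of invariance from $C_W$ to its dual. One has to get the inverse-transpose right and check that the permutation part really is the $\PGL_2$ action on $\mathbb P^1(\mathbb F_q)$, so that supports move by $\bar g$. Everything else reduces to Lemma~\ref{lemma:design}.
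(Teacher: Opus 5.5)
Your proof is correct and follows the paper's argument essentially step for step. Part (i) uses $\mathcal P_k^\perp\subseteq C_W^\perp$ together with the MDS property of $\mathcal P_k^\perp$, and part (ii) uses the fact that $M^{-T}=D^{-1}P$ is monomial with the same coordinate permutation, so the weight-$w$ supports of $C_W^\perp$ are $\PGL_2(\mathbb F_q)$-stable. Your separate treatment of $k=q$ is a small but real improvement over the paper: there $\mathcal P_q^\perp=0$, so the paper's appeal to $d(\mathcal P_k^\perp)=k+2$ is degenerate, and you fall back on the trivial bound $\mathrm{wt}\le q+1$ instead.
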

\begin{proof}
Since $C_W\subseteq \mathcal P_k$, taking duals yields $\mathcal P_k^\perp\subseteq C_W^\perp$. By Proposition~\ref{prop:Pk-MDS}, the code $\mathcal P_k$ is MDS with parameters $[q+1,k+1,q+1-k]$, and hence its dual $\mathcal P_k^\perp$ is also MDS, with parameters $[q+1,q-k,k+2]$. In particular, $d(\mathcal P_k^\perp)=k+2$. Since $\mathcal P_k^\perp\subseteq C_W^\perp$, we obtain $d(C_W^\perp)\le d(\mathcal P_k^\perp)=k+2$, proving $(i)$.

For a fixed integer $w$, let
$\mathcal B_w(C_W^\perp):=\{\Supp(c):c\in C_W^\perp,\ \wt(c)=w\}.$
To prove $(ii)$, it suffices to show that $\mathcal B_w(C_W^\perp)$ is $\PGL_2(\mathbb F_q)$-invariant. Let $S=\Supp(y)\in \mathcal B_w(C_W^\perp), $where $y\in C_W^\perp$ and $\wt(y)=w$. Take any $g\in \PGL_2(\mathbb F_q)$, and choose a representative
$\widetilde g\in \GL_2(\mathbb F_q)$ of $g$. Since $\GL_2(\mathbb F_q)$ acts on $\mathcal P_k$ by monomial transformations, there exists a monomial matrix
$M=DP$ such that
$\widetilde g\cdot c=Mc$ for all $c\in \mathcal P_k,$
where $D$ is diagonal with nonzero diagonal entries and $P$ is a permutation matrix. 
Now define
$z:=M^{-T}y.$
We claim that $z\in C_W^\perp$. Indeed, for any $c\in C_W$, the $\GL_2(\mathbb{F}_q)$-invariance of $C_W$ implies that $M^{-1}c\in C_W$. Thus, we have $$c^Tz = c^TM^{-T}y = (M^{-1}c)^Ty = 0,$$ since $y\in C_W^\perp$. This confirms that $z\in C_W^\perp$.
Since
$M^{-T}=D^{-1}P,$
the matrix $M^{-T}$ is again monomial. Thus it preserves Hamming weight, and
$\wt(z)=\wt(y)=w.$
Moreover, multiplication by $D^{-1}$ does not change the support, while the permutation matrix $P$ acts on coordinates as $g$. Therefore
$$\Supp(z)=g(\Supp(y))=g(S).$$
Hence
$g(S)\in \mathcal B_w(C_W^\perp).$
This shows that $\mathcal B_w(C_W^\perp)$ is $G$-invariant. If $C_W^\perp$ contains a codeword of weight $w$, then
$\mathcal B_w(C_W^\perp)\neq\varnothing$. Therefore, by Lemmas~\ref{lemma:design} and \ref{lemma:3-transitive}, $(X,\mathcal B_w(C_W^\perp))$ is a $3$-$(q+1,w,\lambda_w)$ design for some $\lambda_w$. This proves $(ii)$.
The final assertion follows by taking
$w=d(C_W^\perp),$
provided that
$d(C_W^\perp)\ge 3.$
\end{proof}

% \begin{remark}
% Later we shall see that Proposition~\ref{prop:dual-design} recovers Theorem~40 of \cite{tang2020infinite} and Theorems~3--4 of \cite{xu2024infinite} as special cases.
% \end{remark}

\subsection{An alternative description in unit-circle coordinates}\label{section:Reformulation}
Let $U_{q+1}:=\{u\in\mathbb F_{q^2}^\times:u^{q+1}=1\}$ be the unit circle in $\mathbb F_{q^2}$. 
We now give an equivalent reformulation of the block set $\mathcal B_W$ in unit-circle coordinates via the Cayley transform. 
This will translate the geometric block conditions into explicit linear conditions on elementary symmetric polynomials.

Choose $\xi\in\mathbb F_{q^2}\setminus\mathbb F_q$, and define the Cayley transform
$$
\kappa:\mathbb P^1(\mathbb F_q)\longrightarrow U_{q+1},\qquad
\kappa(x)=\frac{x-\xi}{x-\xi^q}\ \ (x\in\mathbb F_q),\qquad \kappa(\infty)=1.
$$
For $x\in\mathbb F_q$, we have $\kappa(x)^q=\frac{x-\xi^q}{x-\xi}=\kappa(x)^{-1},$ so $\kappa(x)\in U_{q+1}$. 
Since $\kappa$ is induced by an invertible linear fractional transformation, it is injective on $\mathbb P^1(\mathbb F_{q^2})$. As $\kappa(\mathbb P^1(\mathbb F_q))\subseteq U_{q+1}$ and both sets have cardinality $q+1$, it follows that $\kappa$ is a bijection from $\mathbb P^1(\mathbb F_q)$ onto $U_{q+1}$.
In homogeneous coordinates, $\kappa$ and its inverse are given by
$$[X:Y]\longmapsto [U:V]=[X-\xi Y:X-\xi^qY];\qquad
[U:V]\longmapsto [X:Y]=[\xi^qU-\xi V:U-V].$$
One may choose $\xi$ as follows. If $p>2$, choose $\xi$ such that $\xi^q=-\xi$. 
Then we have $\kappa(x)=\frac{x-\xi}{x+\xi}.$
If $p=2$, choose $\xi$ such that $\xi^q+\xi=1$. Then $\kappa(x)=\frac{x+\xi}{x+\xi^q}.$

Since the Cayley transform is defined over $\mathbb F_{q^2}$, we extend scalars from $\mathbb F_q$ to $\mathbb F_{q^2}$.
Recall that $V=\mathbb F_q[X,Y]_k$. 
Set $ V_{\mathbb F_{q^2}}:=V\otimes_{\mathbb F_q}\mathbb F_{q^2}.$
Then there is a natural $\mathbb F_{q^2}$-linear isomorphism
$$
V_{\mathbb F_{q^2}}{\longrightarrow}\mathbb F_{q^2}[X,Y]_k,
\qquad
\Big(\sum_{i=0}^k a_iX^{k-i}Y^i\Big)\otimes\lambda\longmapsto\sum_{i=0}^k (\lambda a_i)X^{k-i}Y^i.
$$
Via this identification, we regard $V_{\mathbb F_{q^2}}$ as $\mathbb F_{q^2}[X,Y]_k$. Similarly, for any $\mathbb F_q$-subspace $W\subseteq V$, we define
$$
W_{\mathbb F_{q^2}}:=W\otimes_{\mathbb F_q}\mathbb F_{q^2}\subseteq \mathbb F_{q^2}[X,Y]_k.
$$

Let
$
H=\begin{pmatrix}
-1 & \xi\\
-1 & \xi^q
\end{pmatrix}\in \GL_2(\mathbb F_{q^2}).
$
Then for every $f\in \mathbb F_{q^2}[X,Y]_k$,
$(H\cdot f)(U,V)=f(\xi^qU-\xi V,U-V).$
We define the transformed subspace associated with $W$ by
$$
\widetilde W:=H\cdot W_{\mathbb F_{q^2}}\subseteq \mathbb F_{q^2}[U,V]_k.
$$
Equivalently,
$
\widetilde W=\Span_{\mathbb F_{q^2}}\{\,f(\xi^qU-\xi V,U-V):f\in W\,\}.
$
Thus $\widetilde W$ is precisely the image of the scalar extension $W_{\mathbb F_{q^2}}$ under the $\mathbb F_{q^2}$-linear automorphism induced by $H$.

For a $k$-subset $T\subseteq U_{q+1}$, define $ G_T(U,V):=\prod_{u\in T}(U-uV)\in \mathbb F_{q^2}[U,V]_k,$
and set
$\widetilde{\mathcal B}_W:=\{\kappa(S):S\in\mathcal B_W\}\subseteq \binom{U_{q+1}}{k}.
$ We have the following proposition.

\begin{proposition}\label{prop:Cayley-block}
Let $W$ be a $\GL_2(\mathbb F_q)$-invariant subspace of $V=\mathbb F_q[X,Y]_k$, and assume that $k\le q+1$. Then
$$
\widetilde{\mathcal B}_W=\left\{\,T\in\binom{U_{q+1}}{k}:G_T(U,V)\in\widetilde W\,\right\}.
$$
In particular, if $\mathcal B_W\neq\varnothing$, then $(U_{q+1},\widetilde{\mathcal B}_W)$ is a $3$-$(q+1,k,\lambda)$ design for some $\lambda$.
\end{proposition}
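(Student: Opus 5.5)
The plan is to compute $H\cdot F_S$ explicitly and show it is a nonzero scalar multiple of $G_{\kappa(S)}$. Then use that $\widetilde W$ is the $H$-image of $W_{\mathbb F_{q^2}}$, together with $W_{\mathbb F_{q^2}}\cap V=W$, to convert the condition $F_S\in W$ into $G_{\kappa(S)}\in\widetilde W$.

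\textbf{Step 1: the factor computation.} For $t\in\mathbb F_q$ we have
\[
(H\cdot(X-tY))(U,V)=(\xi^qU-\xi V)-t(U-V)=(\xi^q-t)U-(\xi-t)V .
\]
Since $\xi\notin\mathbb F_q$, the coefficient $\xi^q-t$ is nonzero, so this equals
\[
(\xi^q-t)\Bigl(U-\tfrac{t-\xi}{t-\xi^q}V\Bigr)=(\xi^q-t)\bigl(U-\kappa(t)V\bigr).
\]
For $t=\infty$ the factor is $Y$, and $H\cdot Y=U-V=U-\kappa(\infty)V$. Since the action of $H$ is multiplicative on products, $H\cdot F_S=c_S\,G_{\kappa(S)}$ with $c_S\in\mathbb F_{q^2}^\times$. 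Because $\kappa$ is a bijection, every $k$-subset $T\subseteq U_{q+1}$ equals $\kappa(S)$ for exactly one $k$-subset $S\subseteq\mathbb P^1(\mathbb F_q)$.

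\textbf{Step 2: the two inclusions.} Since $H$ acts invertibly, $G_T\in\widetilde W$ if and only if $H^{-1}\cdot G_T\in W_{\mathbb F_{q^2}}$, that is, if and only if $F_S\in W_{\mathbb F_{q^2}}$, where $T=\kappa(S)$. If $S\in\mathcal B_W$, then $F_S\in W\subseteq W_{\mathbb F_{q^2}}$, which gives one inclusion. Conversely, $F_S$ has coefficients in $\mathbb F_q$, so it remains to show $W_{\mathbb F_{q^2}}\cap\mathbb F_q[X,Y]_k=W$.

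\textbf{Step 3: the descent step.} This is the step I expect to need the most care, though it is standard. Choose an $\mathbb F_q$-basis $\{1,\xi\}$ of $\mathbb F_{q^2}$. Then every element of $W_{\mathbb F_{q^2}}$ can be written uniquely as $w_1+\xi w_2$ with $w_1,w_2\in W$. Likewise $\mathbb F_{q^2}[X,Y]_k=V\oplus\xi V$, because the monomial coordinates split uniquely. If $w_1+\xi w_2\in V$, uniqueness of this decomposition forces $w_2=0$, so the element equals $w_1\in W$. An alternative is to apply the Frobenius on coefficients, which preserves $W_{\mathbb F_{q^2}}$; its fixed points are $W$ by Galois descent. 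Either route works.

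\textbf{Step 4: the design statement.} If $\mathcal B_W\ne\varnothing$, Proposition~\ref{prop:BW-design} shows that $(X,\mathcal B_W)$ is a $3$-$(q+1,k,\lambda)$ design. The bijection $\kappa$ carries blocks to blocks and preserves incidence. Hence $(U_{q+1},\widetilde{\mathcal B}_W)$ is an isomorphic design with the same parameters.

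Note that the $\GL_2(\mathbb F_q)$-invariance of $W$ is only used through Proposition~\ref{prop:BW-design}. The set equality itself holds for any subspace $W$.
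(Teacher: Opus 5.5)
Your proposal is correct and follows the paper's proof. You use the same factor computation $X-tY=(\xi^q-t)(U-\kappa(t)V)$ to get $H\cdot F_S=c_S\,G_{\kappa(S)}$, and you transport the design through the bijection $\kappa$. Your Step 3, showing $W_{\mathbb F_{q^2}}\cap\mathbb F_q[X,Y]_k=W$, makes explicit a descent point that the paper passes over silently when it asserts that $F_S\in W$ if and only if $c_S\,G_{\kappa(S)}\in\widetilde W$.
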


\begin{proof}
For $t\in\mathbb F_q$, one has
$$
X-tY=(\xi^qU-\xi V)-t(U-V)=(\xi^q-t)U-(\xi-t)V=(\xi^q-t)(U-\kappa(t)V).
$$
For $t=\infty$, the corresponding factor is $Y=U-V=U-\kappa(\infty)V.$
Therefore, for $S\subseteq \mathbb P^1(\mathbb F_q)$ and $|S|=k$, we have 
$$
F_S(\xi^qU-\xi V,U-V)=c_S\,G_{\kappa(S)}(U,V)
$$
for some nonzero scalar $c_S\in\mathbb F_{q^2}^\times$.
Then, $F_S\in W$ if and only if $c_S\,G_{\kappa(S)}(U,V)\in \widetilde W$. Since $\widetilde W$ is an $\mathbb F_{q^2}$-vector subspace, it is equivalent to $G_{\kappa(S)}(U,V)\in \widetilde W$. By definitions of $\mathcal B_W$ and $\widetilde {\mathcal B}_W$, we have $\kappa(S)\in \widetilde {\mathcal B}_W$ if and only if $G_{\kappa(S)}\in \widetilde W$. 
It follows that
$$
\widetilde{\mathcal B}_W
=\{\kappa(S):S\in\mathcal B_W\}
=\left\{\,T\in\binom{U_{q+1}}{k}:G_T(U,V)\in\widetilde W\,\right\}.
$$
Since $\kappa$ is a bijection from $\mathbb P^1(\mathbb F_q)$ onto $U_{q+1}$, the incidence structure
$(U_{q+1},\widetilde{\mathcal B}_W)$
is isomorphic to $(\mathbb P^1(\mathbb F_q),\mathcal B_W).$
Therefore, if $\mathcal B_W\neq\varnothing$, Proposition~\ref{prop:BW-design} implies that $(U_{q+1},\widetilde{\mathcal B}_W)$
is a $3$-$(q+1,k,\lambda)$ design for some $\lambda$.
\end{proof}

The advantage of this reformulation is that the coefficients of $G_T(U,V)$ are elementary symmetric polynomials in the elements of $T$. More precisely, if $T=\{u_1,\dots,u_k\}\subseteq U_{q+1}$, then
\begin{equation}\label{ReG_T}
    G_T(U,V)=\prod_{u\in T}(U-uV)=\sum_{a=0}^k(-1)^ae_a(T)U^{k-a}V^a,
\end{equation}
where $e_a(T)=e_a(u_1,\dots,u_k)$ denotes the $a$-th elementary symmetric polynomial in the elements of $T$.

\begin{proposition}\label{prop:Cayley-coeff}
Let $m$ be a positive integer. Suppose that the transformed subspace $\widetilde W\subseteq \mathbb F_{q^2}[U,V]_k$ is given by a system of $m$ linear conditions on the coefficients, say
$$
\widetilde W=\left\{\sum_{a=0}^k c_aU^{k-a}V^a:\sum_{a=0}^k\lambda_{r,a}c_a=0\ \text{for }1\le r\le m\right\},
$$
where $\lambda_{r,a}\in\mathbb F_{q^2}$. Then
$$
\widetilde{\mathcal B}_W=
\left\{\,T\in\binom{U_{q+1}}{k}:\sum_{a=0}^k(-1)^a\lambda_{r,a}e_a(T)=0\ \text{for }1\le r\le m\right\}.
$$
\end{proposition}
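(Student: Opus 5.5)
This is essentially a direct substitution into Proposition~\ref{prop:Cayley-block}. That proposition characterizes $\widetilde{\mathcal B}_W$ as the set of $k$-subsets $T\subseteq U_{q+1}$ with $G_T(U,V)\in\widetilde W$. So it suffices to decide, for a fixed $T$, when $G_T$ satisfies the $m$ linear conditions defining $\widetilde W$.

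First, by the expansion \eqref{ReG_T}, the coefficient of $U^{k-a}V^a$ in $G_T$ is $c_a=(-1)^ae_a(T)$ for $0\le a\le k$. This includes $a=0$, where $e_0(T)=1$ gives the leading coefficient $1$. The expansion is just Vieta's formula for the product $\prod_{u\in T}(U-uV)$, viewed as a polynomial in $U$ with coefficients in $V$.

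Second, since $G_T$ is a homogeneous polynomial of degree $k$ in $\mathbb F_{q^2}[U,V]_k$, membership in $\widetilde W$ means, by the hypothesis on $\widetilde W$, exactly that
\[
\sum_{a=0}^k\lambda_{r,a}(-1)^ae_a(T)=0 \quad\text{for all } 1\le r\le m.
\]
Combining this with Proposition~\ref{prop:Cayley-block} yields the claimed set equality, with one inclusion in each direction.

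There is no real obstacle. The only point needing care is that the monomials $U^{k-a}V^a$ form a basis of $\mathbb F_{q^2}[U,V]_k$. This makes the coefficient vector $(c_a)$ of $G_T$ well defined, so the linear conditions can be applied to it unambiguously.
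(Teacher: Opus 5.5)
Your proposal is correct and follows the paper's proof exactly. You invoke Proposition~\ref{prop:Cayley-block} to reduce membership in $\widetilde{\mathcal B}_W$ to $G_T\in\widetilde W$, then read off the coefficients $(-1)^a e_a(T)$ from \eqref{ReG_T} and substitute them into the defining linear conditions; your remark that the monomials $U^{k-a}V^a$ form a basis, so the coefficient vector is well defined, is a point the paper leaves implicit.
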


\begin{proof}
By Proposition~\ref{prop:Cayley-block}, a $k$-subset $T\subseteq U_{q+1}$ lies in $\widetilde{\mathcal B}_W$ if and only if $G_T(U,V)\in\widetilde W$. By (\ref{ReG_T}),
this is equivalent to the coefficient conditions
$\sum_{a=0}^k(-1)^a\lambda_{r,a}e_a(T)=0,$ 
for $1\le r\le m$.
This completes the proof.
\end{proof}

\begin{remark}
The reformulation above is most useful when $\widetilde W$ admits a simple coefficient description. In that case, the block set $\widetilde{\mathcal B}_W$ is determined by explicit linear equations in the elementary symmetric polynomials of the points of $T$. Of course, to determine the blocks themselves one must also impose the conditions that the coordinates lie in
$U_{q+1}=\{u\in\mathbb F_{q^2}^\times:u^{q+1}=1\}$
and are pairwise distinct. Thus the $U_{q+1}$-model gives a more symmetric reformulation, while concrete calculations may still be more convenient in the original $\mathbb P^1(\mathbb F_q)$-model.
\end{remark}

\section{The designs arising from Lucas subspaces}\label{section:Lucas}
In this section, we apply the general framework developed in Section~\ref{section:Construction} to the Lucas subspaces. 
We first derive explicit descriptions of the associated block sets, including the cases in which the Cayley description reduces to a single equation.
We then study when these block sets are empty or nonempty, construct several basic families of blocks, and completely determine the case $k=p^m+1$.

\subsection{Explicit descriptions of the block sets}
We now specialize the reformulation of Section~\ref{section:Reformulation} to $W_k^{\mathrm{Luc}}$. 
In this case, the transformed coefficient conditions become particularly simple.
\begin{theorem}\label{thm:Lucas-Cayley}
Let $\kappa$ be the Cayley transform introduced in Section~\ref{section:Reformulation}, and let
$
\widetilde{\mathcal B}_{W_k^{\mathrm{Luc}}}=\{\kappa(S):S\in  \mathcal{B}_{W_k^{\mathrm{Luc}}}\}\subseteq \binom{U_{q+1}}{k}.
$
Then
$$
\widetilde{\mathcal B}_{W_k^{\mathrm{Luc}}}
=
\left\{
T\in \binom{U_{q+1}}{k}: e_a(T)=0\ \text{for all }a\not\le_p k
\right\}.
$$
\end{theorem}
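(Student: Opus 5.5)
The plan is to apply Proposition~\ref{prop:Cayley-coeff}, which reduces everything to one fact: the transformed subspace is again the Lucas subspace, now in the variables $U,V$ over $\mathbb F_{q^2}$. Precisely, I will show
\[
\widetilde{W_k^{\mathrm{Luc}}}=\Span_{\mathbb F_{q^2}}\{U^{k-a}V^a: a\le_p k\}.
\]
Once this holds, $\widetilde W$ is cut out by the $m$ linear conditions $c_a=0$ for $a\not\le_p k$. Applying Proposition~\ref{prop:Cayley-coeff} with $\lambda_{r,a}$ the indicator of the $r$-th such index gives the conditions $(-1)^a e_a(T)=0$, that is, $e_a(T)=0$ for all $a\not\le_p k$. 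This is exactly the statement. If no $a\not\le_p k$ exists, then $\widetilde W$ is the whole space and both sides equal $\binom{U_{q+1}}{k}$; this case is handled directly by Proposition~\ref{prop:Cayley-block}.

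To prove the displayed identity, I will reuse the argument of Theorem~\ref{thm:Lucas-invariant} over $\mathbb F_{q^2}$. That proof never used that the matrix entries lie in $\mathbb F_q$. It only expanded $(dX-bY)^{k-i}(-cX+aY)^i$ binomially and applied Lucas's theorem (Lemma~\ref{lem:Lucas}) with the no-carry argument. So the same computation shows that for any $g\in\GL_2(\mathbb F_{q^2})$, the $\mathbb F_{q^2}$-span $L_k:=\Span_{\mathbb F_{q^2}}\{X^{k-i}Y^i:i\le_p k\}$ satisfies $g\cdot L_k\subseteq L_k$.

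Next I identify the scalar extension. Since $W_k^{\mathrm{Luc}}$ has an $\mathbb F_q$-basis of monomials, $(W_k^{\mathrm{Luc}})_{\mathbb F_{q^2}}=L_k$ under the identification of $V_{\mathbb F_{q^2}}$ with $\mathbb F_{q^2}[X,Y]_k$. Applying the invariance with $g=H$ gives $H\cdot L_k\subseteq L_k$, where the image is written in the variables $U,V$. The map induced by $H$ is an $\mathbb F_{q^2}$-linear automorphism, so it preserves dimension, and the inclusion is therefore an equality. This proves the displayed identity.

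The only point needing care is confirming that the action convention for $H$, namely $(H\cdot f)(U,V)=f(\xi^qU-\xi V,U-V)$, is an instance of formula~\eqref{op_GL_2} for a matrix in $\GL_2(\mathbb F_{q^2})$. With $a=-1$, $b=\xi$, $c=-1$, $d=\xi^q$, formula~\eqref{op_GL_2} gives $dX-bY=\xi^qX-\xi Y$ and $-cX+aY=X-Y$, which matches after renaming the variables. Moreover $\det H=\xi-\xi^q\neq0$ because $\xi\notin\mathbb F_q$. So the invariance argument applies verbatim. I expect no real obstacle here.
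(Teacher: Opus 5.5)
Your proposal is correct and follows essentially the same route as the paper. The common core is that the proof of Theorem~\ref{thm:Lucas-invariant} works verbatim over $\mathbb F_{q^2}$, so $H$ maps the scalar-extended Lucas subspace onto the Lucas subspace in $U,V$; the block condition then amounts to the vanishing of the coefficients of $G_T$ at the forbidden positions. The only cosmetic differences are these: you invoke Proposition~\ref{prop:Cayley-coeff}, with a separate remark for the case of no forbidden indices, rather than reading coefficients off Proposition~\ref{prop:Cayley-block} directly; and you obtain $H\cdot L_k=L_k$ by a dimension count instead of from invariance under the group $\GL_2(\mathbb F_{q^2})$.
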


\begin{proof}
Let
$H=
\begin{pmatrix}
-1 & \xi\\
-1 & \xi^q
\end{pmatrix}\in \GL_2(\mathbb F_{q^2}).$
Then $(H\cdot f)(U,V)=f(\xi^qU-\xi V,U-V)$ for every $f\in \mathbb F_{q^2}[X,Y]_k.$
Extend scalars from $\mathbb F_q$ to $\mathbb F_{q^2}$ and set
$$
W_{k,\mathbb F_{q^2}}^{\mathrm{Luc}}
:=
\Span_{\mathbb F_{q^2}}\{U^{k-a}V^a:a\le_p k\}
\subseteq \mathbb F_{q^2}[U,V]_k.
$$
Since the proof of Theorem~\ref{thm:Lucas-invariant} uses only Lemma~\ref{lem:Lucas} and the binomial expansion, it remains valid over any extension field of characteristic $p$. Hence $W_{k,\mathbb F_{q^2}}^{\mathrm{Luc}}$ is invariant under $\GL_2(\mathbb F_{q^2})$, and in particular
$H\cdot W_{k,\mathbb F_{q^2}}^{\mathrm{Luc}}=W_{k,\mathbb F_{q^2}}^{\mathrm{Luc}}.$
Thus, in the notation of Section~\ref{section:Reformulation}, the transformed subspace $\widetilde{W}$ attached to $W_k^{\mathrm{Luc}}$ is precisely $W_{k,\mathbb F_{q^2}}^{\mathrm{Luc}}$.
By Proposition~\ref{prop:Cayley-block}, a $k$-subset $T\subseteq U_{q+1}$ lies in $\widetilde{\mathcal B}_{W_k^{\mathrm{Luc}}}$ if and only if $G_T(U,V)$
belongs to $W_{k,\mathbb F_{q^2}}^{\mathrm{Luc}}$. Writing
$G_T(U,V)=\sum_{a=0}^k(-1)^ae_a(T)U^{k-a}V^a,$ we see that $G_T(U,V)\in W_{k,\mathbb F_{q^2}}^{\mathrm{Luc}}$ if and only if the coefficients of $U^{k-a}V^a$ vanish for all $a\not\le_p k$, that is, if and only if $e_a(T)=0$ for all $a\not\le_p k$.
\end{proof}

The conditions in Theorem~\ref{thm:Lucas-Cayley} admit a useful symmetry. Since the points of $T$ lie on the unit circle, the conditions indexed by $a$ and $k-a$ are equivalent.

\begin{lemma}\label{lem:Lucas-symmetry}
Let $T\in \binom{U_{q+1}}{k}$. Then the conditions $e_a(T)=0$ for $a\not\le_p k$
are equivalent to the smaller system $e_a(T)=0$ for $a\not\le_p k$ and $1\le a\le \lfloor k/2\rfloor.$
\end{lemma}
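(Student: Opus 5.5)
The plan is to combine a symmetry of the elementary symmetric polynomials on the unit circle with a symmetry of the relation $\le_p k$.

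\textbf{Step 1: the conjugation identity.} For $u\in U_{q+1}$ we have $u^q=u^{-1}$. Hence raising $e_a(T)$ to the $q$-th power (a field automorphism of $\mathbb F_{q^2}$) gives
\[
e_a(T)^q=e_a(u_1^{-1},\dots,u_k^{-1})=\frac{e_{k-a}(T)}{e_k(T)}.
\]
Here $e_k(T)=\prod_{u\in T}u\neq 0$, because all $u\in U_{q+1}$ are nonzero. Since the Frobenius map is injective, this shows $e_a(T)=0$ if and only if $e_{k-a}(T)=0$, for every $0\le a\le k$.

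\textbf{Step 2: $\le_p k$ is symmetric under $a\mapsto k-a$.} We claim $a\le_p k$ if and only if $k-a\le_p k$. If $a\le_p k$, then the base-$p$ subtraction $k-a$ has no borrows. So $(k-a)_r=k_r-a_r\le k_r$ for all $r$, i.e.\ $k-a\le_p k$. Applying the same argument to $k-a$ in place of $a$ gives the converse. Consequently the index set $\{a: a\not\le_p k\}$ is closed under $a\mapsto k-a$.

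\textbf{Step 3: reduce to $1\le a\le\lfloor k/2\rfloor$.} Note that $0\le_p k$ and $k\le_p k$, so the indices $a=0$ and $a=k$ never occur in the system. Take any $a\not\le_p k$ with $a>\lfloor k/2\rfloor$. Then $k-a\not\le_p k$ by Step 2, and $1\le k-a\le\lfloor k/2\rfloor$; the lower bound holds because $a\ne k$. By Step 1, the condition $e_a(T)=0$ is equivalent to $e_{k-a}(T)=0$, which belongs to the smaller system. Every condition of the full system is therefore implied by the smaller one, and the reverse implication is trivial.

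Each step is short. The only points needing care are the no-borrow argument in Step 2 and checking that $e_k(T)\neq0$ in Step 1, so that dividing by it is legitimate.
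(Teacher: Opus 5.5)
Your proof is correct and follows the paper's argument: the Frobenius identity $e_{k-a}(T)=e_k(T)\,e_a(T)^q$ with $e_k(T)\neq 0$, combined with the invariance of $\{a : a\not\le_p k\}$ under $a\mapsto k-a$. The only difference is that you prove this invariance by a direct no-borrow digit argument, whereas the paper derives it from Lucas's theorem and $\binom{k}{a}=\binom{k}{k-a}$; you also make explicit the endpoint checks ($a=0,k$ never occur, and $1\le k-a\le\lfloor k/2\rfloor$), which the paper leaves implicit.
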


\begin{proof}
Write $T=\{u_1,\dots,u_k\}\subseteq U_{q+1}$. Since $u_i^q=u_i^{-1}$ for every $i\in\{1,\cdots,k\}$, we have
$e_a(T)^q=e_a(u_1^q,\dots,u_k^q)=e_a(u_1^{-1},\dots,u_k^{-1}),$ for each $0\le a\le k$.
On the other hand, for nonzero $u_1,\dots,u_k$,
$$
e_a(u_1^{-1},\dots,u_k^{-1})=\frac{e_{k-a}(u_1,\dots,u_k)}{e_k(u_1,\dots,u_k)}.
$$
Hence,  $e_{k-a}(T)=e_k(T)e_a(T)^q.$
Since $e_k(T)=\prod_{u\in T}u\neq 0$, it follows that $e_a(T)=0$ is equivalent to $e_{k-a}(T)=0.$
Moreover, by Lemma~\ref{lem:Lucas}, we have 
$a\le_p k$ if and only if $\binom{k}{a}\not\equiv 0 \pmod p$, and  similarly
$k-a\le_p k$ if and only if $\binom{k}{k-a}\not\equiv 0 \pmod p$.
Since $\binom{k}{a}=\binom{k}{k-a}$, it follows that
$a\le_p k$  exactly when $k-a\le_p k,$
and hence $a\not\le_p k$  exactly when  $k-a\not\le_p k.$
Thus the vanishing conditions occur in pairs $(a,k-a)$, and it suffices to impose one condition from each pair, namely those with $1\le a\le \lfloor k/2\rfloor$.
\end{proof}
 
We next identify the values of $k$ for which these defining conditions collapse to a single equation.
\begin{proposition}\label{prop:single-equation-count}
Set $F_{k,p}:=\{a\in\{0,1,\dots,k\}:a\not\le_p k\}.$ If $F_{k,p}$ consists of a single orbit under the involution $a\mapsto k-a$, the block set $\widetilde{\mathcal B}_{W_k^{\mathrm{Luc}}}$ is defined by a single independent equation.
\end{proposition}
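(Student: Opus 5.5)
The plan is to combine Theorem~\ref{thm:Lucas-Cayley} with Lemma~\ref{lem:Lucas-symmetry}. By Theorem~\ref{thm:Lucas-Cayley}, a $k$-subset $T\subseteq U_{q+1}$ lies in $\widetilde{\mathcal B}_{W_k^{\mathrm{Luc}}}$ exactly when $e_a(T)=0$ for every $a\in F_{k,p}$. The task is therefore to show that, when $F_{k,p}$ forms a single orbit under $a\mapsto k-a$, this system reduces to one equation.

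First, record that $F_{k,p}$ is stable under the involution. Since $\binom{k}{a}=\binom{k}{k-a}$, Lemma~\ref{lem:Lucas} gives $a\not\le_p k\iff k-a\not\le_p k$, as already shown in the proof of Lemma~\ref{lem:Lucas-symmetry}. Also $0\le_p k$ and $k\le_p k$, so $0,k\notin F_{k,p}$. Consequently every orbit lies in $\{1,\dots,k-1\}$, and it has the form $\{a,k-a\}$ with $1\le a\le\lfloor k/2\rfloor$; it may be a singleton when $a=k/2$.

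Next, suppose $F_{k,p}=\{a_0,k-a_0\}$ with $1\le a_0\le\lfloor k/2\rfloor$. For $T\subseteq U_{q+1}$, the proof of Lemma~\ref{lem:Lucas-symmetry} gives $e_{k-a_0}(T)=e_k(T)\,e_{a_0}(T)^q$ with $e_k(T)\neq 0$. Hence $e_{k-a_0}(T)=0$ is a consequence of $e_{a_0}(T)=0$, and conversely. It follows that
$$\widetilde{\mathcal B}_{W_k^{\mathrm{Luc}}}=\left\{T\in\binom{U_{q+1}}{k}: e_{a_0}(T)=0\right\},$$
which is a single independent equation. If $F_{k,p}$ is empty there are no conditions at all; I would note that this case is excluded by the hypothesis.

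I do not expect a real obstacle here. The one point needing care is the meaning of \emph{independent}. The two coefficient conditions $c_{a_0}=0$ and $c_{k-a_0}=0$ defining $\widetilde W$ are linearly independent as linear conditions on $\mathbb F_{q^2}[U,V]_k$. They become equivalent only on the locus of products of factors $U-uV$ with $u\in U_{q+1}$, via the $q$-semilinear relation above, and not as linear forms. I would state this explicitly: the claim is that, restricted to $k$-subsets of $U_{q+1}$, one equation $e_{a_0}(T)=0$ already cuts out the block set.
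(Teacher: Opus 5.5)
Your proposal is correct and takes essentially the paper's route: Theorem~\ref{thm:Lucas-Cayley} reduces the block set to the system $e_a(T)=0$ for $a\in F_{k,p}$, and Lemma~\ref{lem:Lucas-symmetry}, through $e_{k-a}(T)=e_k(T)\,e_a(T)^q$ with $e_k(T)\neq 0$, makes the two equations in the single orbit equivalent. Your added remark is also correct and is left implicit in the paper: the two equations are equivalent only when restricted to $k$-subsets of $U_{q+1}$, not as linear forms on $\mathbb F_{q^2}[U,V]_k$, and this is the sense in which the block set is cut out by one ``independent'' equation.
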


\begin{proof}
By Theorem~\ref{thm:Lucas-Cayley}, the block set $\widetilde{\mathcal B}_{W_k^{\mathrm{Luc}}}$ is defined by equations $e_a(T)=0,$ for $a\in F_{k,p}.$
By Lemma~\ref{lem:Lucas-symmetry}, the conditions $e_a(T)=0$ and $e_{k-a}(T)=0$ are equivalent. Hence two indices in the same orbit under $a\mapsto k-a$ give equivalent equations, and thus the block set can be defined by a single independent equation.
\end{proof}

We can now classify the values of $k$ for which $F_{k,p}$ consists of a single orbit under $a\mapsto k-a$.

\begin{theorem}\label{thm:single-equation}
Assume that $3\le k\le q+1$. Then $F_{k,p}$ consists of a single orbit under the involution $a\mapsto k-a$ if and only if one of the following holds:
\begin{enumerate}[label=(\roman*)]
    \item $p=2$ and $k=5$;
    \item $p$ is odd and $k\in\{2p-3,\,2p-2,\,3p-2\}$.
\end{enumerate}
\end{theorem}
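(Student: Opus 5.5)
The plan is to turn the orbit condition into a counting condition on $|F_{k,p}|$ and then solve that counting problem digit by digit in base $p$.

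\textbf{Reduction to $|F_{k,p}|\in\{1,2\}$.} By Lemma~\ref{lem:Lucas}, $F_{k,p}=\{a:\binom{k}{a}\equiv 0\pmod p\}$. Since $\binom{k}{a}=\binom{k}{k-a}$, this set is stable under $a\mapsto k-a$. The involution has at most one fixed point, namely $a=k/2$. So a single orbit is either $\{k/2\}$ or a pair $\{a,k-a\}$. Conversely, if $|F_{k,p}|\in\{1,2\}$, the set cannot split into two orbits: a single element forms one orbit, and two orbits of a $2$-element set would need two fixed points. Hence the condition in the statement is equivalent to
\[
N(k):=|F_{k,p}|=k+1-\prod_{r\ge0}(k_r+1)\in\{1,2\}.
\]

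\textbf{A recursion for $N(k)$.} Write $k=k_sp^s+k'$ with top digit $k_s\ge1$ and $0\le k'<p^s$, and set $P'=\prod_{r<s}(k'_r+1)$. A direct expansion gives
\[
N(k)=N(k')+k_s\,(p^s-P').
\]
Here $P'\le p^s$, with equality exactly when $k'=p^s-1$.

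\textbf{Case analysis.}
\begin{itemize}[label=$\cdot$]
\item If $k<p$, then $N(k)=0$.
\item If $k'=p^s-1$, then $N(k)=N(k')=0$, because every $i\le k'$ satisfies $i\le_p k'$.
\item If $s=1$ (two digits), the recursion gives $N(k)=k_1(p-1-k_0)$.
  \begin{itemize}[label=$\circ$]
  \item $N=1$ forces $k_1=1$ and $k_0=p-2$, so $k=2p-2$. For $p=2$ this is $k=2$, which is excluded by $k\ge3$.
  \item $N=2$ forces either $(k_1,k_0)=(1,p-3)$, which needs $p\ge3$ and gives $k=2p-3$, or $(k_1,k_0)=(2,p-2)$, which needs $k_1=2\le p-1$, so $p\ge3$, and gives $k=3p-2$.
  \item For $p=2$, no two-digit $k\ge3$ has $N(k)\in\{1,2\}$: only $k=2$ has $N=1$, and $k=3$ has $N=0$.
  \end{itemize}
\item If $s\ge2$ and $k'\ne p^s-1$: the largest value of $P'$ below $p^s$ is $(p-1)p^{s-1}$. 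So $p^s-P'\ge p^{s-1}\ge p\ge2$, and $N(k)\le2$ forces equality throughout. That means $p=2$, $s=2$, $k_s=1$, $N(k')=0$, and $P'=2$, so $k'\in\{1,2\}$. Since $N_2(2)=1\neq0$, only $k'=1$ remains, giving $k=5$.
\end{itemize}

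\textbf{Sufficiency.} For each value found, check directly that $N(k)\in\{1,2\}$:
\begin{itemize}[label=$\cdot$]
\item $N(2p-2)=1$, with $F=\{p-1\}=\{k/2\}$;
\item $N(2p-3)=2$;
\item $N(3p-2)=2$;
\item for $p=2$, $N(5)=6-4=2$, with $F=\{2,3\}$.
\end{itemize}
The standing hypothesis $3\le k\le q+1$ is only used to discard small $k$; it plays no other role in the classification. The main obstacle is the bookkeeping in the case $s\ge2$, where one must be sure the gap $p^s-P'$ is minimised by lowering only a single digit of $k'$. To settle this I would bound $P'\le(p-1)p^{s-1}$ directly, since any digit of $k'$ that is below $p-1$ already reduces the product by a factor of at most $(p-1)/p$.
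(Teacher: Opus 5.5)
Your proof is correct and follows essentially the same route as the paper. Both reduce the orbit condition to $|F_{k,p}|\in\{1,2\}$, peel off the top base-$p$ digit to get $|F_{k,p}|=k_s(p^s-P')+|F_{k',p}|$, bound this below by $p^{s-1}$ when $s\ge 2$ and $k'\ne p^s-1$, and solve $k_1(p-1-k_0)\in\{1,2\}$ in the two-digit case. Your version is a bit more careful than the paper's: you justify that $F_{k,p}$ is stable under $a\mapsto k-a$ with at most one fixed point, you handle $p=2$ uniformly through the recursion rather than by checking $k=4,\dots,7$ directly, and you verify sufficiency explicitly.
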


\begin{proof}
Since the involution $a\mapsto k-a$ has orbits of size at most $2$, the set $F_{k,p}$ consists of a single orbit if and only if $|F_{k,p}|\in\{1,2\}$. Writing
$k=\sum_{r\ge 0}k_rp^r,$
we have
$|F_{k,p}|=(k+1)-\prod_{r\ge 0}(k_r+1).$

Assume first that $p$ is odd. Let $t$ be the largest index such that $k_t\ne 0$, so that
$k=ap^t+b,$ for $1\le a\le p-1$ and  $0\le b<p^t$.
Write
$$
b=\sum_{r=0}^{t-1}b_rp^r,\qquad A(b):=\prod_{r=0}^{t-1}(b_r+1).
$$
Then $|F_{k,p}|=a(p^t-A(b))+|F_{b,p}|.$
If $t\ge 2$ and $F_{k,p}\ne\varnothing$, then $b$ is not of the form $p^t-1$, hence
$A(b)\le p^{t-1}(p-1),$
so
$p^t-A(b)\ge p^{t-1}.$
Therefore
$$
|F_{k,p}|\ge ap^{t-1}\ge p^{t-1}\ge p\ge 3,
$$
contradicting $|F_{k,p}|\in\{1,2\}$. Thus $t=1$, so
$k=ap+b$, for $1\le a\le p-1$ and $0\le b\le p-1.$
In this case
$|F_{k,p}|=ap+b+1-(a+1)(b+1)=a(p-b-1).$
Requiring $a(p-b-1)\in\{1,2\}$ yields exactly the three possibilities
$$
(a,b)=(1,p-3),\quad (1,p-2),\quad (2,p-2),
$$
that is,
$k\in\{2p-3,\,2p-2,\,3p-2\}.$

Now assume that $p=2$. Write $k=2^t+b$,
where $t$ is the highest nonzero binary digit of $k$ and $0\le b<2^t$. If $t\ge 3$ and $F_{k,2}\ne\varnothing$, then the same argument gives $|F_{k,2}|\ge 2^{t-1}\ge 4,$
again impossible. Hence $t\le 2$. Since $k\ge 3$, the case $t=1$ gives only $k=3$, for which $F_{3,2}=\varnothing$. Thus $t=2$, so
$k=4+b,$ where $0\le b\le 3.$
A direct check gives
$$
|F_{4,2}|=3,\qquad |F_{5,2}|=2,\qquad |F_{6,2}|=3,\qquad |F_{7,2}|=0.
$$
Therefore the only possibility is $k=5$.
\end{proof}

The preceding theorem identifies the cases in which $F_{k,p}$ consists of a single orbit under $a\mapsto k-a$, and in these cases the Cayley description reduces to a single equation.

\begin{corollary}\label{cor:single-equation}
Assume that $3\le k\le q+1$. In each of the following cases, the Cayley description of $\widetilde{\mathcal B}_{W_k^{\mathrm{Luc}}}$ reduces to a single equation:
\begin{enumerate}[label=(\roman*)]
    \item If $p=2$ and $k=5$, then
    $
    \widetilde{\mathcal B}_{W_5^{\mathrm{Luc}}}
    =
    \left\{
    T\in\binom{U_{q+1}}{5}:e_2(T)=0
    \right\}.
    $
    \item If $p$ is odd and $k=2p-3$, then
    $
    \widetilde{\mathcal B}_{W_{2p-3}^{\mathrm{Luc}}}
    =
    \left\{
    T\in\binom{U_{q+1}}{2p-3}:e_{p-2}(T)=0
    \right\}.
    $
    \item If $p$ is odd and $k=2p-2$, then
    $
    \widetilde{\mathcal B}_{W_{2p-2}^{\mathrm{Luc}}}
    =
    \left\{
    T\in\binom{U_{q+1}}{2p-2}:e_{p-1}(T)=0
    \right\}.
    $
    \item If $p$ is odd and $k=3p-2$, then
    $
    \widetilde{\mathcal B}_{W_{3p-2}^{\mathrm{Luc}}}
    =
    \left\{
    T\in\binom{U_{q+1}}{3p-2}:e_{p-1}(T)=0
    \right\}.
    $
\end{enumerate}
\end{corollary}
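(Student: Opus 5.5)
The plan is to combine Theorem~\ref{thm:Lucas-Cayley} with Lemma~\ref{lem:Lucas-symmetry}, after computing the set $F_{k,p}=\{a: a\not\le_p k\}$ explicitly in each of the four cases. By Theorem~\ref{thm:single-equation} we already know that in each case $F_{k,p}$ is a single orbit $\{a_0,k-a_0\}$ (possibly with $a_0=k-a_0$) under $a\mapsto k-a$. By Theorem~\ref{thm:Lucas-Cayley}, $T\in\widetilde{\mathcal B}_{W_k^{\mathrm{Luc}}}$ if and only if $e_{a_0}(T)=e_{k-a_0}(T)=0$. Lemma~\ref{lem:Lucas-symmetry} (more precisely, the identity $e_{k-a}(T)=e_k(T)e_a(T)^q$ with $e_k(T)\neq 0$) shows this is equivalent to $e_{a_0}(T)=0$ alone. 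So the only real work is to identify the representative $a_0$ in each case.

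Case (i): $p=2$, $k=5=101_2$. The indices $a\le_2 5$ are $0,1,4,5$, so $F_{5,2}=\{2,3\}$, which is the orbit of $2$. The single equation is $e_2(T)=0$.

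Case (ii): $p$ odd, $k=2p-3=p+(p-3)$, with digits $(k_1,k_0)=(1,p-3)$. An index $a\le k$ has $a\not\le_p k$ exactly when its units digit exceeds $p-3$, i.e.\ equals $p-2$ or $p-1$.
\begin{itemize}
\item With $a_1=0$ this gives $a\in\{p-2,p-1\}$.
\item With $a_1=1$ it gives $a\ge 2p-2>k$, so no further indices.
\end{itemize}
Thus $F=\{p-2,p-1\}$, and indeed $(p-2)+(p-1)=2p-3=k$, so this is one orbit with representative $p-2$.

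Case (iii): $k=2p-2$, with digits $(1,p-2)$. The only failing units digit is $p-1$, giving $a=p-1$ (the index $2p-1$ exceeds $k$). So $F=\{p-1\}$, a fixed point of the involution since $k-(p-1)=p-1$.

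Case (iv): $k=3p-2$, with digits $(2,p-2)$. Again the failing units digit is $p-1$, giving $a\in\{p-1,2p-1\}$ (the index $3p-1$ exceeds $k$). These two indices sum to $3p-2=k$, so they form one orbit and the single equation is $e_{p-1}(T)=0$.

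There is no real obstacle here. The only care needed is bookkeeping of base-$p$ digits, in particular making sure indices exceeding $k$ are discarded and that $p\ge 3$ keeps $p-3\ge 0$. When $p=3$ in case (ii), $k=3$ and $F=\{1,2\}$, which is consistent with the general computation.
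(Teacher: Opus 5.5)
Your proposal is correct and matches the paper's proof: compute $F_{k,p}$ from the base-$p$ digits in each case, then apply Theorem~\ref{thm:Lucas-Cayley} and Lemma~\ref{lem:Lucas-symmetry} to keep one equation per orbit of $a\mapsto k-a$. Your explicit check that indices such as $2p-2$, $2p-1$ and $3p-1$ exceed $k$ is a little more careful than the paper's ``one checks''.
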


\begin{proof}
It suffices to determine the forbidden set $F_{k,p}$ in each case. \begin{enumerate}[label=(\roman*)]
    \item If $p=2$ and $k=5$, then $5=(101)_2$, so $F_{5,2}=\{2,3\}.$
By Lemma~\ref{lem:Lucas-symmetry}, the conditions $e_2(T)=0$ and $e_3(T)=0$ are equivalent, so the single independent equation is $e_2(T)=0$.
\item If $p$ is odd and $k=2p-3$, then $k=(1,p-3)_p$, and one checks that
$F_{2p-3,p}=\{p-2,p-1\}.$
These two indices form one orbit under $a\mapsto k-a$, so the single independent equation is $e_{p-2}(T)=0$.
\item If $p$ is odd and $k=2p-2$, then $k=(1,p-2)_p$, and $F_{2p-2,p}=\{p-1\}.$
Thus the single equation is $e_{p-1}(T)=0$.
\item If $p$ is odd and $k=3p-2$, then $k=(2,p-2)_p$, and
$F_{3p-2,p}=\{p-1,2p-1\}.$
These two indices again form one orbit under $a\mapsto k-a$, so the single independent equation is $e_{p-1}(T)=0$.
\end{enumerate}
\end{proof}

We conclude this subsection with two examples. 

\begin{example}[The space \(W_5^{\mathrm{Luc}}\) in characteristic \(2\)]\label{ex:W5-char2}
Assume that $q=2^n$. Since $5=1+2^2$ has binary expansion $(101)_2$, the condition $i\le_2 5$ is equivalent to $i\in\{0,1,4,5\}$. Hence
$$
W_5^{\mathrm{Luc}}=\Span_{\mathbb F_q}\{X^5,X^4Y,XY^4,Y^5\}\subseteq \mathbb F_q[X,Y]_5.
$$
To determine $\mathcal{B}_{W_5^{\mathrm{Luc}}}$,  we first consider blocks containing $\{\infty,0,1\}$.  
Let
$S=\{\infty,0,1,a,b\}$, where $a,b\in\mathbb F_q\setminus\{0,1\}$ and $a\ne b.
$
Then
\begin{align*}
    F_S(X,Y)&=Y\cdot X(X-Y)(X-aY)(X-bY)\\
    &=X^4Y+(1+a+b)X^3Y^2+(a+b+ab)X^2Y^3+abXY^4.
\end{align*}
Therefore
$F_S\in W_5^{\mathrm{Luc}}
$ exactly when $1+a+b=0$ and $a+b+ab=0.$
Eliminating $b$ gives
$$
b=1+a,\qquad a^2+a+1=0.
$$
Thus $\mathcal{B}_{W_5^{\mathrm{Luc}}}\ne\varnothing$ if and only if the polynomial $Z^2+Z+1$ has roots in $\mathbb F_q$, equivalently, if and only if $n$ is even. In that case, if $\omega\in\mathbb F_4\subseteq\mathbb F_q$ satisfies $\omega^2+\omega+1=0$, then the unique block containing $\{\infty,0,1\}$ is
$B_0=\{\infty,0,1,\omega,\omega^2\}$.
Since $\PGL_2(\mathbb F_q)$ acts sharply $3$-transitively on $\mathbb P^1(\mathbb F_q)$ and preserves $\mathcal{B}_{W_5^{\mathrm{Luc}}}$, it follows that every $3$-subset of $\mathbb P^1(\mathbb F_q)$ is contained in exactly one block whenever $\mathcal{B}_{W_5^{\mathrm{Luc}}}\neq\varnothing$.
Hence:
\begin{enumerate}[label=(\roman*)]
    \item if $n$ is odd, then $\mathcal{B}_{W_5^{\mathrm{Luc}}}=\varnothing$;
    \item if $n$ is even, then $(\mathbb P^1(\mathbb F_q),\mathcal{B}_{W_5^{\mathrm{Luc}}})$ is a Steiner system $S(3,5,q+1)$.
\end{enumerate} 
 Since \(5=101_2\), the only integers \(a\) with \(0\le a\le 5\) and \(a\not\le_2 5\) are $ a=2,3.$ By Lemma~\ref{lem:Lucas-symmetry}, we have
    \[
        \widetilde{\mathcal{B}}_{W_5^{\mathrm{Luc}}}
        =
        \left\{
        T\in \binom{U_{q+1}}{5} :
        e_2(T)=0
        \right\}.
    \]
    Thus, when \(n\) is even, $\bigl(U_{q+1},\widetilde{\mathcal{B}}_{W_5^{\mathrm{Luc}}}\bigr)$
    is also a Steiner system \(S(3,5,q+1)\).
    In particular, for \(n\) even, the design parameters are
    \[
        v=q+1,\qquad k=5,\qquad \lambda=1,
    \]
    and consequently
    \[
        b=\frac{\binom{q+1}{3}}{\binom{5}{3}}
        =\frac{(q+1)q(q-1)}{60},
        \qquad
        r=\frac{\binom{q}{2}}{\binom{4}{2}}
        =\frac{q(q-1)}{12}.
    \]
\end{example}
\begin{remark}
    The above Steiner system is exactly the \(3\)-design appearing in Theorem~3 of \cite{Tang21}. While the unit-circle model provides an elegant symmetric condition $e_2(T)=0$, solving it necessitates working over the extension field $\mathbb{F}_{q^2}$ subject to $u^{q+1}=1$. By contrast, the equivalent $\mathbb{P}^1(\mathbb{F}_q)$-model is computationally much more direct, as it allows one to recover the block via straightforward coefficient calculations over the base field $\mathbb{F}_q$.
\end{remark}

\begin{example}[The space \(W_7^{\mathrm{Luc}}\) in characteristic \(3\)]\label{ex:W7-char3}
Assume that $q=3^n$ with $n\ge 2$, so that $7\le q+1$. Since $7=1+2\cdot 3$ has ternary expansion $(21)_3$, the condition $i\le_3 7$ is equivalent to $i\in\{0,1,3,4,6,7\}$. Hence
$$
W_7^{\mathrm{Luc}}=\Span_{\mathbb F_q}\{X^7,X^6Y,X^4Y^3,X^3Y^4,XY^6,Y^7\}\subseteq \mathbb F_q[X,Y]_7.
$$
To describe $\mathcal{B}_{W_7^{\mathrm{Luc}}}$, it is enough to consider blocks containing $\{\infty,0,1\}$. Let
$S=\{\infty,0,1,a,b,c,d\},$
where $a,b,c,d\in\mathbb F_q\setminus\{0,1\}$ are pairwise distinct. Set
$T=\{0,1,a,b,c,d\}$. Then
$$F_S(X,Y)=Y\prod_{t\in T}(X-tY)=\sum_{j=0}^6(-1)^je_j(T)X^{6-j}Y^{j+1}.$$
Since
$W_7^{\mathrm{Luc}}=\Span_{\mathbb F_q}\{X^7,X^6Y,X^4Y^3,X^3Y^4,XY^6,Y^7\},$
it follows that $F_S\in W_7^{\mathrm{Luc}}$ if and only if $e_1(T)=e_4(T)=0.$
Because $0\in T$, this may be rewritten as
$$
1+a+b+c+d=0,\qquad e_4(1,a,b,c,d)=0.
$$
Thus $\mathcal{B}_{W_7^{\mathrm{Luc}}}$ consists precisely of those $7$-subsets $S\subseteq \mathbb P^1(\mathbb F_q)$ whose associated polynomial $F_S$ has vanishing coefficients in the forbidden positions.

Since \(7=21_3\), the integers \(a\) with \(0\le a\le 7\) and \(a\not\le_3 7\) are $a=2,5.$
By Lemma~\ref{lem:Lucas-symmetry}, we have
$$
\widetilde{\mathcal B}_{W_7^{\mathrm{Luc}}}
=
\left\{
T\in\binom{U_{q+1}}{7}:e_2(T)=0
\right\}.
$$
This is exactly the block description appearing in Theorem~2 of \cite{Xu2024}. Whenever $\mathcal{B}_{W_7^{\mathrm{Luc}}}\ne\varnothing$, the previous results imply that
$(U_{q+1},\widetilde{\mathcal B}_{W_7^{\mathrm{Luc}}})$
is a $3$-$(q+1,7,\lambda)$ design for some $\lambda$.
\end{example}
\begin{remark}
    The \(3\)-design obtained here is exactly the one appearing in Theorem~2 of \cite{Xu2024}. In that paper, the authors were not able to determine its parameters explicitly. We shall compute these parameters later. As we will see, working with the \(\mathbb{P}^1(\mathbb{F}_q)\)-model greatly simplifies the calculation.
\end{remark}

\subsection{On the emptiness and nonemptiness of the block sets}
We now study when the associated block sets are empty or nonempty.
We begin with a simple vanishing criterion.
\begin{proposition}\label{prop:p-divides-k}
Assume that $1\le k\le q+1$. If $p\mid k$, then $\mathcal{B}_{W_k^{\mathrm{Luc}}}=\varnothing.$
\end{proposition}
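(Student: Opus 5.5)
The plan is to show directly that when $p\mid k$ no polynomial $F_S$ can lie in $W_k^{\mathrm{Luc}}$, because every element of $W_k^{\mathrm{Luc}}$ is a $p$-th power, while every $F_S$ is squarefree.

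First, I will identify the exponents allowed in $W_k^{\mathrm{Luc}}$. Since $p\mid k$, the lowest base-$p$ digit is $k_0=0$. Hence $i\le_p k$ forces $i_0=0$, that is, $p\mid i$, and then also $p\mid k-i$. So every basis monomial $X^{k-i}Y^i$ of $W_k^{\mathrm{Luc}}$ has both exponents divisible by $p$.

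Second, I will show that every $f\in W_k^{\mathrm{Luc}}$ is a $p$-th power. Write $f=\sum_{i\le_p k} a_iX^{k-i}Y^i$ with $a_i\in\mathbb F_q$. Since $\mathbb F_q$ is perfect, each $a_i=b_i^p$ for some $b_i\in\mathbb F_q$. By the Frobenius identity in characteristic $p$,
\[
f=\Bigl(\sum_{i\le_p k} b_iX^{(k-i)/p}Y^{i/p}\Bigr)^p=h^p,
\]
with $h\in\mathbb F_q[X,Y]_{k/p}$.

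Third, I will derive a contradiction from a block. Suppose some $S\in\mathcal B_{W_k^{\mathrm{Luc}}}$, so $F_S=h^p$ with $h$ nonzero of degree $k/p\ge 1$. Any linear factor $X-tY$ (or $Y$) of $F_S$ then divides $h$, and so it divides $F_S$ with multiplicity at least $p\ge 2$. This is impossible, because $F_S=\prod_{t\in S}(X-tY)$ is a product of $k$ pairwise non-associate linear forms, as the points of $S$ are distinct. Hence $\mathcal B_{W_k^{\mathrm{Luc}}}=\varnothing$.

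There is no real obstacle. The only points needing care are the digit argument showing $p\mid i$, and the use of perfectness of $\mathbb F_q$ to take $p$-th roots of the coefficients. As an alternative route, one could note that $1\not\le_p k$, so Theorem~\ref{thm:Lucas-Cayley} forces $e_1(T)=0$; however, the $p$-th power argument is cleaner and does not need the Cayley model.
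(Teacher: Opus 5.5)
Your proposal is correct and follows essentially the same route as the paper: the digit argument forces $p\mid i$ for every $i\le_p k$, perfectness of $\mathbb F_q$ makes every $f\in W_k^{\mathrm{Luc}}$ a $p$-th power, and no $F_S$, being a product of $k$ pairwise non-associate linear forms, can be a $p$-th power. The only difference is cosmetic: the paper phrases the last step through multiplicities of zeros and Proposition~\ref{prop:BW-nonempty}, whereas you argue directly by unique factorization of $F_S$.
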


\begin{proof}
Since $p\mid k$, the least base-$p$ digit of $k$ is zero. Hence, if $i\le_p k$, then the least base-$p$ digit of $i$ is also zero, so $p\mid i$. It follows that every monomial $X^{k-i}Y^i$ occurring in $W_k^{\mathrm{Luc}}$ has both exponents divisible by $p$. Therefore every polynomial $f\in W_k^{\mathrm{Luc}}$ can be written in the form
$$f(X,Y)=\sum_j c_j X^{pa_j}Y^{pb_j}.$$
Since $\mathbb F_q$ is perfect, each $c_j$ has a $p$-th root in $\mathbb F_q$, and hence
$
f(X,Y)=g(X,Y)^p
$
for some homogeneous polynomial $g(X,Y)\in \mathbb F_q[X,Y]$. Consequently every zero of $f$ in $\mathbb P^1(\mathbb F_q)$ occurs with multiplicity divisible by $p$. In particular, $f$ cannot vanish at $k$ distinct points of $\mathbb P^1(\mathbb F_q)$. By Proposition~\ref{prop:BW-nonempty}, this implies $\mathcal{B}_{W_k^{\mathrm{Luc}}}=\varnothing$.
\end{proof}

The next lemma provides a useful multiplicative construction.
\begin{lemma}\label{lem:carry-free-product}
Let $\ell_1,\dots,\ell_s$ be nonnegative integers such that $k:=\ell_1+\cdots+\ell_s$
is carry-free in base $p$. If $f_j(X,Y)\in W_{\ell_j}^{\mathrm{Luc}}$ for $1\le j\le s$,
then their product $f_1(X,Y)\cdots f_s(X,Y)\in W_k^{\mathrm{Luc}}.$
\end{lemma}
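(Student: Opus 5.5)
By bilinearity of multiplication and the definition of $W_{\ell}^{\mathrm{Luc}}$ as a span of monomials, it suffices to treat monomials. I would take $f_j=X^{\ell_j-i_j}Y^{i_j}$ with $i_j\le_p \ell_j$ for each $j$. Their product is $X^{k-i}Y^{i}$ with $i=i_1+\cdots+i_s$. So the statement reduces to a purely arithmetic claim: if $i_j\le_p\ell_j$ for all $j$ and the sum $\ell_1+\cdots+\ell_s$ is carry-free in base $p$, then $i\le_p k$.

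I read ``carry-free'' as saying that for every digit position $r$ we have $\sum_j(\ell_j)_r\le p-1$, and hence $k_r=\sum_j(\ell_j)_r$. Now fix $r$. Since $(i_j)_r\le(\ell_j)_r$ for each $j$, we get $\sum_j(i_j)_r\le\sum_j(\ell_j)_r=k_r\le p-1$. Thus adding the $i_j$ in base $p$ also produces no carries. Consequently $i_r=\sum_j(i_j)_r\le k_r$ for every $r$, which is exactly $i\le_p k$.

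This is the same digit-wise argument used in the proof of Theorem~\ref{thm:Lucas-invariant}. The only point needing care is to justify that the absence of carries in $\sum_j\ell_j$ forces the absence of carries in $\sum_j i_j$, so that the digits of $i$ are literally the digit sums. One can also argue by induction on $s$ with the two-term case, since a carry-free sum stays carry-free on every partial subsum.

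A convenient alternative, via Lemma~\ref{lem:Lucas}, is the identity $\binom{k}{i}\equiv\prod_r\binom{k_r}{i_r}$ together with the multinomial Vandermonde convolution. The direct digit argument is simpler, though, and it is the one I would write. Extending from monomials to arbitrary $f_j$ then follows by expanding the product and using that $W_k^{\mathrm{Luc}}$ is a subspace.
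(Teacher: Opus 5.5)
Your proposal is correct and is essentially the paper's argument. The paper also looks at a monomial $X^{k-i}Y^i$ in the product with $i=i_1+\cdots+i_s$ and $i_j\le_p\ell_j$, and uses the carry-free hypothesis to get $\sum_j(i_j)_t\le k_t<p$ at each digit. It then concludes that the sum $i=\sum_j i_j$ has no carries and $i_t=\sum_j(i_j)_t\le k_t$, so $i\le_p k$, exactly as you do.
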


\begin{proof}
For each $j$, write
$$
f_j(X,Y)=\sum_{i_j\le_p \ell_j} a_{j,i_j}X^{\ell_j-i_j}Y^{i_j}.
$$
Consider a monomial occurring in the product $f_1\cdots f_s$. Its exponent of $Y$ is of the form $i=i_1+\cdots+i_s$ with $i_j\le_p \ell_j$ for all $j$. Write
$$
\ell_j=\sum_{t\ge 0}(\ell_j)_tp^t,\qquad i_j=\sum_{t\ge 0}(i_j)_tp^t,\qquad k=\sum_{t\ge 0}k_tp^t.
$$
Since $i_j\le_p \ell_j$, we have $(i_j)_t\le (\ell_j)_t$
for all $j,t$. 
As the sum $\ell_1+\cdots+\ell_s$ is carry-free, for each $t$ one has
$(\ell_1)_t+\cdots+(\ell_s)_t=k_t<p.$
Hence
$(i_1)_t+\cdots+(i_s)_t\le (\ell_1)_t+\cdots+(\ell_s)_t=k_t<p$ for all $t$. 
Therefore the addition
$i=i_1+\cdots+i_s$ is also carry-free in base $p$, and so
$i_t=(i_1)_t+\cdots+(i_s)_t\le k_t$ for all $t$. 
Thus $i\le_p k$. 
It follows that every monomial occurring in $f_1\cdots f_s$ is of the form $X^{k-i}Y^i$ with $i\le_p k$, and hence
$f_1(X,Y)\cdots f_s(X,Y)\in W_k^{\mathrm{Luc}}.$
\end{proof}

The lemma immediately yields the following reduction principle for nonemptiness.
\begin{corollary}\label{cor:carry-free-block}
Let $\ell_1,\dots,\ell_s$ be nonnegative integers such that $k=\ell_1+\cdots+\ell_s$
is carry-free in base $p$. 
Suppose that $S_j\in \mathcal{B}_{W_{\ell_j}^{\mathrm{Luc}}}$ for $1\le j\le s,$
and that the subsets $S_1,\dots,S_s$ are pairwise disjoint. Then
$$
S:=S_1\cup\cdots\cup S_s
$$
belongs to $\mathcal{B}_{W_k^{\mathrm{Luc}}}$. In particular, $\mathcal{B}_{W_k^{\mathrm{Luc}}}\neq \varnothing.$
\end{corollary}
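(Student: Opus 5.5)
The plan is to apply Lemma~\ref{lem:carry-free-product} directly to the polynomials $F_{S_j}$. By definition of $\mathcal B_{W_{\ell_j}^{\mathrm{Luc}}}$, each $S_j$ is an $\ell_j$-subset of $\mathbb P^1(\mathbb F_q)$ with $F_{S_j}\in W_{\ell_j}^{\mathrm{Luc}}$. Since $k=\ell_1+\cdots+\ell_s$ is carry-free, Lemma~\ref{lem:carry-free-product} gives
\[
F_{S_1}(X,Y)\cdots F_{S_s}(X,Y)\in W_k^{\mathrm{Luc}}.
\]

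It remains to identify this product with $F_S$. Because the $S_j$ are pairwise disjoint, $S=S_1\cup\cdots\cup S_s$ is a $k$-subset of $\mathbb P^1(\mathbb F_q)$. Since $k\le q+1$ is automatic from $S\subseteq\mathbb P^1(\mathbb F_q)$, we are within the standing assumptions. By definition $F_S=\prod_{t\in S}(X-tY)$, with the factor $Y$ when $t=\infty$. This product splits over the disjoint pieces, giving $F_S=\prod_j F_{S_j}$. Note that $\infty$ lies in at most one $S_j$, so the factor $Y$ appears exactly once if at all. Hence $F_S\in W_k^{\mathrm{Luc}}$, which means $S\in\mathcal B_{W_k^{\mathrm{Luc}}}$, and in particular this set is nonempty.

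There is no real obstacle. The only points needing care are that disjointness guarantees $|S|=k$, so that $F_S$ is the product of distinct linear factors, and that the convention for $\infty$ is respected under multiplication. Both are immediate.
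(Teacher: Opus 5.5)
Your proposal is correct and follows the paper's proof essentially verbatim: you factor $F_S=\prod_j F_{S_j}$ using the disjointness of the $S_j$, and then apply Lemma~\ref{lem:carry-free-product} to conclude $F_S\in W_k^{\mathrm{Luc}}$. Your additional remarks, that $|S|=k$ and that the factor $Y$ for $\infty$ appears at most once, are correct bookkeeping that the paper leaves implicit.
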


\begin{proof}
Since the sets $S_1,\dots,S_s$ are pairwise disjoint, we have
$$
F_S(X,Y)=\prod_{j=1}^s F_{S_j}(X,Y).
$$
As $F_{S_j}\in W_{\ell_j}^{\mathrm{Luc}}$ for each $j$, Lemma~\ref{lem:carry-free-product} gives $F_S\in W_k^{\mathrm{Luc}}.$
Hence $S\in \mathcal{B}_{W_k^{\mathrm{Luc}}}$.
\end{proof}

We now record two useful classes of basic blocks.
\begin{proposition}[Multiplicative basic blocks]\label{prop:multiplicative-basic}
Let $d$ be a positive integer such that $d\mid (q-1)$ and $p\nmid d$.
Let $H\le \mathbb F_q^\times$ be a multiplicative subgroup of order $d$. Then
$H\in \mathcal{B}_{W_d^{\mathrm{Luc}}}.$
If, in addition,
$p\nmid (d+1),$ then
$H\cup\{\infty\}\in \mathcal{B}_{W_{d+1}^{\mathrm{Luc}}}.$
\end{proposition}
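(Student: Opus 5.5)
Since $H\le\mathbb F_q^\times$ is cyclic of order $d$, its elements are exactly the roots of $Z^d-1$, so we can compute $F_H$ explicitly and read off its monomials. Concretely,
\[
F_H(X,Y)=\prod_{t\in H}(X-tY)=X^d-Y^d,
\]
obtained by homogenizing $\prod_{t\in H}(Z-t)=Z^d-1$. The only monomials that occur are $X^dY^0$ and $X^0Y^d$, with indices $i=0$ and $i=d$. Both satisfy $i\le_p d$ trivially ($0$ has all digits zero, and $d\le_p d$). Hence $F_H\in W_d^{\mathrm{Luc}}$, and $H$ is a $d$-subset of $\mathbb P^1(\mathbb F_q)$, so $H\in\mathcal B_{W_d^{\mathrm{Luc}}}$. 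The hypothesis $p\nmid d$ is not needed for membership itself. Indeed, $d\mid q-1$ already forces $p\nmid d$, so that hypothesis is automatic and only ensures the roots are distinct.

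For the second claim, the factor at $\infty$ is $Y$, so
\[
F_{H\cup\{\infty\}}(X,Y)=Y(X^d-Y^d)=X^dY-Y^{d+1}.
\]
The $Y$-exponents occurring are $1$ and $d+1$, so we must check $1\le_p d+1$ and $d+1\le_p d+1$. The second holds trivially. The first says that the least base-$p$ digit of $d+1$ is at least $1$, which is exactly the condition $p\nmid(d+1)$. Hence $F_{H\cup\{\infty\}}\in W_{d+1}^{\mathrm{Luc}}$. Also $H\cup\{\infty\}$ has $d+1$ distinct points, and $d+1\le q\le q+1$. Therefore $H\cup\{\infty\}\in\mathcal B_{W_{d+1}^{\mathrm{Luc}}}$.

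There is no real obstacle. The only points needing care are the convention that the linear factor attached to $\infty$ is $Y$, and the identification of $\le_p$ for the index $1$ with the nonvanishing of the units digit. One could alternatively argue via Lemma~\ref{lem:carry-free-product} with $\ell_1=d$, $\ell_2=1$, where $d+1$ is carry-free precisely when $p\nmid(d+1)$; the direct computation above is simpler.
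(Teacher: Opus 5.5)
Your proposal is correct and is essentially the paper's proof: compute $F_H=X^d-Y^d$ and $F_{H\cup\{\infty\}}=X^dY-Y^{d+1}$, and observe that the only nontrivial check is $1\le_p d+1$, which is equivalent to $p\nmid(d+1)$. Your side remarks are also correct: $p\nmid d$ already follows from $d\mid q-1$, and the alternative argument via Lemma~\ref{lem:carry-free-product} with $\ell_1=d$, $\ell_2=1$ works.
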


\begin{proof}
Since $H$ is the subgroup of $d$-th roots of unity in $\mathbb F_q^\times$, we have
$$
F_H(X,Y)=\prod_{a\in H}(X-aY)=X^d-Y^d.
$$
Only the coefficients in positions $0$ and $d$ occur, and these are always allowed. Hence $H\in \mathcal{B}_{W_d^{\mathrm{Luc}}}$.
Now set $S':=H\cup\{\infty\}$. Then
$$
F_{S'}(X,Y)=Y(X^d-Y^d).
$$
The only exponents of $Y$ occurring here are $1$ and $d+1$.
The exponent $d+1$ is always allowed, while $1\le_p d+1$
holds exactly when the least base-$p$ digit of $d+1$ is nonzero, equivalently when $p\nmid (d+1)$. Therefore, under this additional assumption, $H\cup\{\infty\}\in \mathcal{B}_{W_{d+1}^{\mathrm{Luc}}}$.
\end{proof}

\begin{proposition}[Subfield-line basic blocks]\label{prop:subfield-line-basic}
Let $q=p^e$, and let $m$ be a positive integer such that $m\mid e$. Then the projective subline
$$
S=\mathbb P^1(\mathbb F_{p^m})=\mathbb F_{p^m}\cup\{\infty\}\subseteq \mathbb P^1(\mathbb F_q)
$$
satisfies $S\in \mathcal{B}_{W_{p^m+1}^{\mathrm{Luc}}}.$
In particular, $\mathcal{B}_{W_{p^m+1}^{\mathrm{Luc}}}\neq \varnothing.$
\end{proposition}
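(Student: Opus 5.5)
The plan is to compute $F_S$ explicitly and read off which monomials occur. Since $\infty\in S$ and the finite part of $S$ is $\mathbb F_{p^m}$, the definition of $F_S$ (with the factor $Y$ for $t=\infty$) gives
$$
F_S(X,Y)=Y\prod_{t\in\mathbb F_{p^m}}(X-tY).
$$
The inner product is the homogenization of the polynomial $\prod_{t\in\mathbb F_{p^m}}(Z-t)=Z^{p^m}-Z$. Homogenizing to degree $p^m$ gives $X^{p^m}-XY^{p^m-1}$, and hence
$$
F_S(X,Y)=X^{p^m}Y-XY^{p^m}.
$$
A quick sanity check: this is exactly the analogue of $\theta(X,Y)$ from Lemma~\ref{lem:kerEv} for the subfield $\mathbb F_{p^m}$. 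It has degree $p^m+1=k$, and its zeros in $\mathbb P^1(\mathbb F_q)$ are precisely the points of $S$.

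Next we check the Lucas condition. The exponents of $Y$ that occur are $1$ and $p^m$. For $m\ge 1$, the base-$p$ expansion of $k=p^m+1$ has digit $1$ in position $0$, digit $1$ in position $m$, and zeros elsewhere. Therefore $1\le_p k$ and $p^m\le_p k$, so both monomials $X^{k-1}Y$ and $XY^{k-1}$ are among the spanning monomials of $W_{p^m+1}^{\mathrm{Luc}}$ in Definition~\ref{def:Lucas-subspace}. It follows that $F_S\in W_{p^m+1}^{\mathrm{Luc}}$.

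Finally we need the hypotheses that make $S$ a valid $k$-subset. Since $m\mid e$, the field $\mathbb F_{p^m}$ is a subfield of $\mathbb F_q$, so $S$ is a genuine $(p^m+1)$-subset of $\mathbb P^1(\mathbb F_q)$, and $k=p^m+1\le q+1$. Hence $S\in\mathcal B_{W_{p^m+1}^{\mathrm{Luc}}}$, and nonemptiness follows immediately.

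There is no real obstacle in this argument. The only point requiring care is the homogenization bookkeeping in the first step, namely confirming that the factor $Y$ together with the homogenized $Z^{p^m}-Z$ yields exactly the two monomials above.
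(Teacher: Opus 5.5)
Your proposal is correct and follows the paper's proof essentially verbatim: you compute $F_S=Y\prod_{t\in\mathbb F_{p^m}}(X-tY)=X^{p^m}Y-XY^{p^m}$ and observe that the $Y$-exponents $1$ and $p^m$ both satisfy $\le_p p^m+1$. Your extra remark that $m\mid e$ makes $S$ a genuine $(p^m+1)$-subset of $\mathbb P^1(\mathbb F_q)$ is a harmless addition that the paper leaves implicit.
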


\begin{proof}
The polynomial vanishing exactly on $\mathbb F_{p^m}$ is $\prod_{a\in\mathbb F_{p^m}}(X-aY) = X^{p^m}-XY^{p^m-1}$.
Therefore, the associated polynomial for $S = \mathbb F_{p^m}\cup\{\infty\}$ is
$$
F_S(X,Y) = Y(X^{p^m}-XY^{p^m-1}) = X^{p^m}Y - XY^{p^m}.
$$
The exponents of $Y$ occurring in $F_S(X,Y)$ are exactly $1$ and $p^m$. For $k = p^m+1$, its base-$p$ expansion simply consists of $1$'s at the $m$-th and $0$-th positions. It is then immediate that $1 \le_p k$ and $p^m \le_p k$. Thus, $F_S \in W_{p^m+1}^{\mathrm{Luc}}$, which yields $S\in \mathcal{B}_{W_{p^m+1}^{\mathrm{Luc}}}$.
\end{proof}

The preceding proposition completely determines the case $k=p^m+1$.
\begin{proposition}\label{prop:q0-plus-one}
Let $q=p^e$, and let $q_0=p^m$ with $1\le m\le e$. Then
$$
|\mathcal{B}_{W_{q_0+1}^{\mathrm{Luc}}}|=
\begin{cases}
\dfrac{\binom{q+1}{3}}{\binom{q_0+1}{3}}
=\dfrac{q(q^2-1)}{q_0(q_0^2-1)}, & \text{if }m\mid e,\\[8pt]
0, & \text{if }m\nmid e.
\end{cases}
$$
Moreover, if $m\mid e$, then $(\mathbb P^1(\mathbb F_q),\mathcal{B}_{W_{q_0+1}^{\mathrm{Luc}}})$
is a Steiner system $S(3,q_0+1,q+1),$ that is, a $3$-$(q+1,q_0+1,1)$ design.
\end{proposition}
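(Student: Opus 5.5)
The plan is to compute directly the blocks through the three points $\{\infty,0,1\}$ and then use sharp $3$-transitivity. Write $k=q_0+1$ with $q_0=p^m$. Since $k=(1,0,\dots,0,1)_p$ (digits $1$ at positions $0$ and $m$), the only $i$ with $i\le_p k$ are $i\in\{0,1,q_0,q_0+1\}$. Hence
\[
W_{q_0+1}^{\mathrm{Luc}}=\Span_{\mathbb F_q}\{X^{q_0+1},X^{q_0}Y,XY^{q_0},Y^{q_0+1}\}.
\]

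First consider a block $S\in\mathcal B_{W_{q_0+1}^{\mathrm{Luc}}}$ containing $\infty$. Then $F_S=Y\,g(X,Y)$ with $g=\prod_{t\in S\setminus\{\infty\}}(X-tY)$ of degree $q_0$. Membership in $W$ forces $F_S$ to have $Y$-exponents only in $\{1,q_0\}$, so $g=X^{q_0}+cXY^{q_0-1}$ for some $c\in\mathbb F_q$. Dehomogenizing, the finite part $S\setminus\{\infty\}$ is exactly the set of $q_0$ distinct roots in $\mathbb F_q$ of $P(x)=x^{q_0}+cx$.

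Now also require $0,1\in S$. Then $P(1)=0$ gives $c=-1$, so $P(x)=x^{q_0}-x$ and the block must be $\{x\in\mathbb F_q:x^{p^m}=x\}\cup\{\infty\}$. This polynomial has $q_0$ distinct roots in $\mathbb F_q$ exactly when $\mathbb F_{p^m}\subseteq\mathbb F_q$, i.e.\ when $m\mid e$. If $m\nmid e$, its roots in $\mathbb F_q$ form $\mathbb F_{p^{\gcd(m,e)}}$, which has fewer than $q_0$ elements.

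So the conclusion splits by whether $m$ divides $e$.

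\emph{Case $m\nmid e$.} No block contains $\{\infty,0,1\}$. Suppose $\mathcal B$ were nonempty. By Proposition~\ref{prop:BW-design} and Lemma~\ref{lemma:3-transitive} its blocks could be moved by some $g\in\PGL_2(\mathbb F_q)$ to contain $\{\infty,0,1\}$, since $k\ge3$ and $\mathcal B$ is invariant. This is a contradiction, so $\mathcal B=\varnothing$.

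\emph{Case $m\mid e$.} The block through $\{\infty,0,1\}$ exists (it is $\mathbb P^1(\mathbb F_{q_0})$, consistent with Proposition~\ref{prop:subfield-line-basic}) and is unique. By invariance and $3$-transitivity, every $3$-subset $\{x,y,z\}$ lies in exactly one block. Explicitly, take $g$ with $g(\infty,0,1)=(x,y,z)$; blocks through $\{x,y,z\}$ correspond bijectively under $g^{-1}$ to blocks through $\{\infty,0,1\}$. Hence $\lambda=1$, giving a Steiner system $S(3,q_0+1,q+1)$. Relation~\eqref{solve_b} then gives
\[
b=\binom{q+1}{3}\Big/\binom{q_0+1}{3}=\frac{q(q^2-1)}{q_0(q_0^2-1)}.
\]

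The only step needing care is the claim that membership in $W$ forces $g=X^{q_0}+cXY^{q_0-1}$. Multiplying by $Y$ shifts exponents, so the condition is that $F_S$ has no $X^{q_0+1}$ term, which holds automatically since $Y\mid F_S$, and no $Y^{q_0+1}$ term, which holds because $0\in S$ makes $X\mid g$. Keeping only the $X^{q_0}Y$ and $XY^{q_0}$ terms then forces the stated shape. Everything else is routine.
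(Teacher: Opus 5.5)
Your argument is correct and follows the paper's proof essentially step for step: reduce to blocks through $\{\infty,0,1\}$, force $P_T(Z)=Z^{q_0}-Z$, read off $m\mid e$ from the root set $\mathbb F_{p^{\gcd(m,e)}}$, and conclude $\lambda=1$ by sharp $3$-transitivity. The only difference is cosmetic. You use $0\in S$ to discard the $Y^{q_0+1}$ term up front, whereas the paper keeps a constant $d$ and kills it afterwards. Note that your first paragraph asserts the shape $g=X^{q_0}+cXY^{q_0-1}$ before $0\in S$ is imposed; in general a $dY^{q_0}$ term survives there, as your last paragraph effectively acknowledges.
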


\begin{proof}
If $m\mid e$, then Proposition~\ref{prop:subfield-line-basic} shows that
$\mathcal{B}_{W_{q_0+1}^{\mathrm{Luc}}}\neq \varnothing.$
Conversely, suppose that
$\mathcal{B}_{W_{q_0+1}^{\mathrm{Luc}}}\neq \varnothing.$
Since $\PGL_2(\mathbb F_q)$ acts sharply $3$-transitively on $\mathbb P^1(\mathbb F_q)$ and preserves $\mathcal{B}_{W_{q_0+1}^{\mathrm{Luc}}}$, it suffices to study the blocks containing $\{\infty,0,1\}$. Let $S\in \mathcal{B}_{W_{q_0+1}^{\mathrm{Luc}}}$ contain $\{\infty,0,1\}$, and write
$S=\{\infty\}\cup T,$ where $T\subseteq \mathbb F_q$ has cardinality $q_0$ and contains $0$ and $1$. Define $P_T(Z):=\prod_{t\in T}(Z-t)\in \mathbb F_q[Z].$
Then
$$
F_S(X,Y)=Y\prod_{t\in T}(X-tY)=Y^{q_0+1}P_T(X/Y).
$$
Since
$W_{q_0+1}^{\mathrm{Luc}}=\Span_{\mathbb F_q}\{X^{q_0+1},X^{q_0}Y,XY^{q_0},Y^{q_0+1}\},$
and every term of $F_S(X,Y)$ contains a factor $Y$, it follows that
$P_T(Z)=Z^{q_0}+cZ+d$
for some $c,d\in \mathbb F_q$. As $0\in T$, we have $d=0$. As $1\in T$, we have
$0=P_T(1)=1+c,$ so $c=-1$. Therefore $P_T(Z)=Z^{q_0}-Z.$
Thus $T$ is precisely the set of roots in $\mathbb F_q$ of the polynomial $Z^{q_0}-Z$. The roots of $Z^{p^m}-Z$ in $\mathbb F_q$ form the unique subfield of $\mathbb F_q$ of order $p^{\gcd(m,e)}$, namely $\mathbb F_{p^{\gcd(m,e)}}$. Since $|T|=q_0=p^m$, we must have $\gcd(m,e)=m$, that is, $m\mid e.$
We have shown that
$\mathcal{B}_{W_{q_0+1}^{\mathrm{Luc}}}\neq \varnothing$ if and only if $ m\mid e.$

Assume now that $m\mid e$. Then the argument above shows that the unique block containing $\{\infty,0,1\}$ is $\mathbb F_{q_0}\cup\{\infty\}.$
By the sharp $3$-transitivity of $\PGL_2(\mathbb F_q)$, every $3$-subset of $\mathbb P^1(\mathbb F_q)$ is therefore contained in exactly one block of $\mathcal{B}_{W_{q_0+1}^{\mathrm{Luc}}}$. 
Hence
$(\mathbb P^1(\mathbb F_q),\mathcal{B}_{W_{q_0+1}^{\mathrm{Luc}}})$
is a Steiner system $S(3,q_0+1,q+1)$, equivalently a $3$-$(q+1,q_0+1,1)$ design. 
Finally,
$$
|\mathcal{B}_{W_{q_0+1}^{\mathrm{Luc}}}|=\frac{\binom{q+1}{3}}{\binom{q_0+1}{3}}
=\frac{q(q^2-1)}{q_0(q_0^2-1)}.
$$
This completes the proof.
\end{proof}
%\subsection{The Ternary Melas Code and The Size of $\mathcal{B}_{W_7^{Luc}}$}
\section{\texorpdfstring{Determining parameters for the designs associated with $W_7^{\mathrm{Luc}}$}{Determining parameters for the designs associated with W7Luc}}\label{section:Lucas2}
In this section, using the known weight distribution of the ternary Melas code, we determine the parameters of the designs associated with $W_7^{\mathrm{Luc}}$. We first compute the parameter $\lambda_2$ of the $3$-$(q+1,7,\lambda_2)$ design introduced in Example~\ref{ex:W7-char3}, thereby resolving the case left open in \cite[Theorems~2 and~5]{Xu2024}. We then determine the remaining parameter $\lambda_1$ from \cite[Theorems~1 and~4]{Xu2024}.

Let $q=3^m$. The \emph{ternary Melas code} $M(q)$ is the $\mathbb F_3$-linear code of length $q-1$, defined by
$$
M(q):=
\left\{
(c_x)_{x\in\mathbb F_q^\times}\in \mathbb F_3^{\,q-1}:
\sum_{x\in\mathbb F_q^\times}c_xx=0,\ 
\sum_{x\in\mathbb F_q^\times}c_xx^{-1}=0
\right\}.
$$

\begin{lemma}\label{lem:Melas-weight-5}
Let $q=3^m$. Then
$$
A_3=0,\qquad
A_5=\frac{4(q-1)\bigl(q^2+(( -1)^m-14)q+36\bigr)}{15},
$$
where $A_i$ denotes the number of codewords of weight $i$ in $M(q)$ for $i\ge0$.
\end{lemma}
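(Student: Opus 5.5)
The plan is to treat the two assertions separately. The vanishing of $A_3$ will be checked by hand. The formula for $A_5$ will come from a character-sum count that reduces to power moments of Kloosterman sums over $\mathbb F_q$.

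\emph{Step 1: $A_3=0$.} Suppose a codeword $c$ has weight $3$, with support $\{x,y,z\}\subseteq\mathbb F_q^\times$ and nonzero coefficients $\varepsilon_x,\varepsilon_y,\varepsilon_z\in\{\pm1\}$. Both defining conditions of $M(q)$ are odd in each variable, so replacing $x$ by $\varepsilon_x x$, and likewise for $y,z$, reduces to the case $\varepsilon_x=\varepsilon_y=\varepsilon_z=1$. The supports stay three points, though they may now coincide. If two of them coincide, say $x=y$, then the two conditions become $2x+z=0$ and $2x^{-1}+z^{-1}=0$, that is, $z=x$ and $z^{-1}=x^{-1}$. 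This is an allowed configuration only if the original points were distinct, and the collision case is handled by exactly this computation. In the generic case, $x+y+z=0$ and $x^{-1}+y^{-1}+z^{-1}=0$. The second equation gives $xy+yz+zx=0$. Hence $x,y,z$ are the roots of $T^3-xyz$, which in characteristic $3$ equals $(T-c)^3$, so $x=y=z$. One must check that the sign normalization cannot produce a genuine weight-$3$ codeword from such a triple; the resulting $c$ is then seen to have weight at most $1$ or to be $0$, a contradiction. So $A_3=0$.

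\emph{Step 2: counting ordered tuples.} Let $\omega=e^{2\pi i/3}$, let $\mathrm{Tr}$ denote the absolute trace $\mathbb F_q\to\mathbb F_3$, and write
\[
K(a,b)=\sum_{x\in\mathbb F_q^\times}\omega^{\mathrm{Tr}(ax+bx^{-1})}.
\]
For a fixed sign vector $\varepsilon\in\{\pm1\}^5$, orthogonality of characters gives
\[
N_5:=\#\Bigl\{(x_1,\dots,x_5)\in(\mathbb F_q^\times)^5:\ \sum_i\varepsilon_ix_i=0,\ \sum_i\varepsilon_ix_i^{-1}=0\Bigr\}
=\frac1{q^2}\sum_{a,b\in\mathbb F_q}\prod_{i=1}^5K(\varepsilon_ia,\varepsilon_ib).
\]
Since $K(-a,-b)=\overline{K(a,b)}=K(a,b)$, each factor is real and the signs drop out. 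When $ab\neq0$, $K(a,b)$ depends only on the product $ab$. The terms with $ab=0$ contribute explicit constants, namely $(q-1)^5$ and $(-1)^5$-type values. So $N_5$ is an explicit expression in $q$ and the moments $\sum_{c\in\mathbb F_q^\times}K(c)^j$, using $j=5$ here and, through the collision terms of Step 3, $j\le 3$. In the same way I will compute $N_1$ and $N_3$ for the shorter tuples that arise there.

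\emph{Step 3: inclusion--exclusion.} I will pass from ordered sign-weighted tuples to codewords of weight exactly $5$ by removing degenerate tuples. If $x_i=x_j$ with $\varepsilon_i=-\varepsilon_j$, the two terms cancel and the tuple reduces to a shorter one. If $x_i=x_j$ with $\varepsilon_i=\varepsilon_j$, the coefficient becomes $2\equiv-1$, so the tuple again reduces to a shorter one. Using $A_1=0$, $A_2=0$ (no two distinct points satisfy both conditions) and $A_3=0$ from Step 1, only a few collision patterns survive, and they are expressible through $N_1$, $N_3$ and $q-1$. Dividing by the symmetry factor $5!$ for ordering, times the appropriate factor for the sign choices, then yields $A_5$.

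\emph{Main obstacle.} The hard input is the fifth power moment $\sum_{c}K(c)^5$ of Kloosterman sums in characteristic $3$. The lower moments are classical, and I expect the $(-1)^m$ term in the formula to enter through the fifth one. My first attempt is to take this moment, or equivalently the weight distribution of $M(q)^\perp=\{(\mathrm{Tr}(ax+bx^{-1}))_x\}$ combined with the MacWilliams identities, from the known literature on Melas codes (van der Geer--Schoof--van der Vlugt, and Moisio's moment evaluations), and then verify that the resulting expression simplifies to
\[
\frac{4(q-1)\bigl(q^2+((-1)^m-14)q+36\bigr)}{15}.
\]
As a sanity check, I will confirm the formula numerically for $q=9$ and $q=27$.
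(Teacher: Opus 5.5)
\textbf{Verdict: genuine gap.} Step~3 rests on the false claim $A_2=0$, so your collision bookkeeping, carried out as described, produces the wrong $A_5$.

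\textbf{Step 1.} Your conclusion $A_3=0$ is right. The cleanest ending is a pigeonhole argument: the normalized triple forces $\varepsilon_xx=\varepsilon_yy=\varepsilon_zz=w$. Hence $\{x,y,z\}\subseteq\{w,-w\}$, which is impossible for three distinct points. Your phrase ``weight at most $1$ or $0$'' is not the right reason.

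\textbf{Step 3.} Since $q$ is odd, for every $x\in\mathbb F_q^\times$ we have $x+(-x)=0$ and $x^{-1}+(-x)^{-1}=0$. So $\pm(e_x+e_{-x})$ are weight-$2$ codewords of $M(q)$, giving $A_2=q-1$, and also $A_4>0$ for $q\ge 9$. These are exactly the configurations your bookkeeping discards. After substituting $y_i=\varepsilon_ix_i$, the degenerate solutions of the five-term system are precisely the ordered tuples with one of two underlying multisets:
\begin{itemize}
\item $\{a,-a,b,b,b\}$ with $b\neq\pm a$, i.e.\ a weight-$2$ relation plus a triple whose coefficient collapses to $3\equiv 0$;
\item $\{c,c,c,c,-c\}$.
\end{itemize}
For every sign vector this gives $10(q-1)(q-3)+5(q-1)$ degenerate tuples. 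Note also that a same-sign collision shortens the tuple by one, not two. So even-length reduced systems necessarily occur, and the corrections cannot be expressed through $N_1$ and $N_3$ alone.

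\textbf{The corrected reduction.} With the correct corrections,
\[
A_5=\frac{2^5}{5!}\Bigl(N_5-10(q-1)(q-3)-5(q-1)\Bigr),\qquad N_5=\frac{(q-1)^5-2(q-1)+(q-1)\sum_{c\neq 0}K(c)^5}{q^2}.
\]
So the stated formula is equivalent to
\[
\sum_{c\in\mathbb F_q^\times}K(c)^5=(-1)^mq^3+5q^2+4q+1.
\]
One can check this directly: the sum is $31$ for $q=3$ and $1171$ for $q=9$. Omitting the $\{a,-a,b,b,b\}$ pattern alone changes $A_5$ by $\frac{4}{15}\cdot 10(q-1)(q-3)$. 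At $q=9$ that is $128$, whereas the true value is $0$, so your planned numerical check would fail.

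\textbf{Comparison with the paper.} The paper's proof is purely a citation: it reads $A_3$ and $A_5$ off the published weight distribution of the ternary Melas code (Table~6.1 of van der Geer--Schoof--van der Vlugt). Once repaired, your route replaces that citation with a citation of the fifth Kloosterman moment in characteristic $3$, plus an elementary reduction. That is a legitimate alternative, but it is not more self-contained, since the fifth moment is the deep input either way.
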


\begin{proof}
This gives the weight-$3$ and weight-$5$ terms in the known weight distribution of the ternary Melas code; see \cite[Table 6.1]{van1992weight}.
\end{proof}

We now compute the parameter of the $3$-design in Example~\ref{ex:W7-char3}.

\begin{theorem}\label{thm:lambda2}
Let $q=3^m$, and let $\lambda_2$ denote the number of blocks in $\mathcal{B}_{W_7^{\mathrm{Luc}}}$ containing any fixed $3$-subset of $\mathbb P^1(\mathbb F_q)$. Then
$$
\lambda_2=\frac{q^2+(( -1)^m-14)q+36}{24}.
$$
It follows that,
$$
\bigl|\mathcal{B}_{W_7^{\mathrm{Luc}}}\bigr|
=
\frac{q(q^2-1)\bigl(q^2+(( -1)^m-14)q+36\bigr)}{5040}.
$$
In particular,
$ \mathcal{B}_{W_7^{\mathrm{Luc}}}\neq\varnothing$
if and only if  $\lambda_2>0$ (if and only if $m\ge 3$);
in that case
$\bigl(\mathbb P^1(\mathbb F_q),\mathcal{B}_{W_7^{\mathrm{Luc}}}\bigr)$ is a $3$-$(q+1,7,\lambda_2)$ design.
\end{theorem}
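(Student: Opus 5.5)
By the sharp $3$-transitivity of $\PGL_2(\mathbb F_q)$ (Lemma~\ref{lemma:3-transitive}) and the $G$-invariance of $\mathcal B_{W_7^{\mathrm{Luc}}}$ (Proposition~\ref{prop:BW-design}), the number $\lambda_2$ equals the number of blocks containing $\{\infty,0,1\}$. So the plan is to count these blocks and then identify the count with $A_5/32$ for the Melas code, via a scaling argument.

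\textbf{Step 1: blocks through $\{\infty,0,1\}$ as 5-sets.} By Example~\ref{ex:W7-char3}, such a block is $S=\{\infty,0\}\cup R$ with $R=\{1,a,b,c,d\}\subseteq\mathbb F_q^\times$ a $5$-set containing $1$. The membership conditions are $e_1(R)=0$ and $e_4(R)=0$. Since all elements of $R$ are nonzero,
\[
e_4(R)=e_5(R)\sum_{y\in R}y^{-1}.
\]
Hence the conditions become
\[
\sum_{y\in R}y=0,\qquad \sum_{y\in R}y^{-1}=0.
\]
Let $\mathcal R$ be the family of all $5$-subsets $R\subseteq\mathbb F_q^\times$ satisfying these two equations, and put $N:=|\mathcal R|$. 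Then $\lambda_2$ is the number of $R\in\mathcal R$ with $1\in R$. The group $\mathbb F_q^\times$ acts on $\mathcal R$ by $R\mapsto\mu R$, because both equations are homogeneous. Double counting the pairs $(R,y)$ with $y\in R$ and choosing the scalar $\mu=y^{-1}$ gives
\[
\lambda_2=\frac{5N}{q-1}.
\]

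\textbf{Step 2: a $32$-to-$1$ correspondence with weight-$5$ Melas codewords.} Given $R\in\mathcal R$ and signs $\varepsilon\in\{\pm1\}^R$, define $c$ by $c_{\varepsilon_y y}=\varepsilon_y$ for $y\in R$, and $c_x=0$ otherwise. Then
\[
c_x x=y,\qquad c_x x^{-1}=\varepsilon_y(\varepsilon_y y)^{-1}=y^{-1},
\]
so $c\in M(q)$.
\begin{itemize}
\item \emph{Weight exactly $5$.} This requires the points $\varepsilon_y y$ to be distinct, which fails only if $y,-y\in R$. In that case $y+(-y)=0$ and $y^{-1}+(-y)^{-1}=0$, so the other three elements of $R$ would form a weight-$3$ codeword (all coefficients $1$). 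This contradicts $A_3=0$ from Lemma~\ref{lem:Melas-weight-5}.
\item \emph{Every weight-$5$ codeword arises.} Conversely, a weight-$5$ codeword $c$ yields the multiset $\{c_xx\}$. A repeated value would mean $x$ and $-x$ both lie in the support with opposite signs. Their contributions to both sums then combine to $-x$ and $-x^{-1}$, leaving a relation among at most four terms. I expect ruling out this degenerate case to be the main obstacle. I would first try to reduce it again to $A_3=0$, possibly together with the fact that $M(q)$ has no codewords of weight at most $2$. If that fails, I would instead count these degenerate configurations directly and check that they do not occur.
\item \emph{Injectivity.} The map $(R,\varepsilon)\mapsto c$ is injective, since $R=\{c_xx\}$ and $\varepsilon_y=c_x$ are recovered from $c$.
\end{itemize}
Assuming this bijection, $A_5=32N$. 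Combined with Step 1 and Lemma~\ref{lem:Melas-weight-5},
\[
\lambda_2=\frac{5A_5}{32(q-1)}=\frac{q^2+((-1)^m-14)q+36}{24}.
\]

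\textbf{Step 3: the block count and nonemptiness.} The number of blocks follows from \eqref{solve_b}:
\[
|\mathcal B_{W_7^{\mathrm{Luc}}}|=\lambda_2\binom{q+1}{3}\Big/\binom{7}{3},
\]
which simplifies to the stated formula. For nonemptiness, $m=1$ is excluded because $7\le q+1$ is required. For $q=9$ the numerator is $81-117+36=0$. For $m\ge3$ the quadratic $q^2+((-1)^m-14)q+36$ is positive. By Proposition~\ref{prop:BW-design}, in that case we obtain a $3$-$(q+1,7,\lambda_2)$ design.
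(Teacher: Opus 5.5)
Your architecture is sound. It is essentially the paper's double count, reorganized: you separate the scaling by $\mathbb F_q^\times$ (giving $\lambda_2=5N/(q-1)$) from the $2^5$ sign choices (giving $A_5=32N$). The paper instead marks a support point $t$ and shows that $(c,t)\mapsto\Phi(c,t)\in\Lambda$ has fibres of size $32(q-1)$. Step~1, the check $c\in M(q)$, the weight-$5$ argument via $A_3=0$, injectivity, and Step~3 are all fine.

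The gap is the surjectivity step, which you leave as an expectation. You must show that for \emph{every} weight-$5$ codeword $c$ the values $c_xx$, $x\in\Supp(c)$, are pairwise distinct. Without this you only have $A_5\ge 32N$, i.e.\ only an upper bound on $\lambda_2$. Neither fallback you suggest closes it as stated:
\begin{itemize}
\item The fact you propose to invoke is false. For every $x\in\mathbb F_q^\times$, $e_x+e_{-x}\in M(q)$, since $x+(-x)=0$ and $x^{-1}+(-x)^{-1}=0$. So $M(q)$ has $q-1$ codewords of weight $2$.
\item $A_3=0$ does not apply directly. An opposite-signed pair at $x,-x$ does not cancel; it contributes to both check sums exactly as the single term $-\epsilon e_x$ would. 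That leaves a weight-$4$ relation, not a weight-$3$ one.
\end{itemize}

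What is needed is a short cubic identity. This is precisely the paper's argument that no normalized value $u_x$ equals $-1$; there the cubic factors as $(T+1)(T^2+e_2'')$. Suppose $c_x=\epsilon$ and $c_{-x}=-\epsilon$, so both positions give the value $u:=\epsilon x$. Let $v,w,z$ be the values at the remaining three positions. In characteristic $3$, the relations $2u+v+w+z=0$ and $2u^{-1}+v^{-1}+w^{-1}+z^{-1}=0$ become $e_1(v,w,z)=u$ and $e_2(v,w,z)=u^{-1}e_3(v,w,z)$. Hence
\[
(v+w)(w+z)(z+v)=e_1e_2-e_3=0 .
\]
Say $w=-v$; then $z=u$. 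So a third support position $x'$ satisfies $c_{x'}x'=\epsilon x$, which forces $x'\in\{x,-x\}$, a contradiction. Equivalently, deleting the two positions carrying $v$ and $-v$ leaves a weight-$3$ codeword, so $A_3=0$ does finish the job, but only after this identity. With this step inserted, your map $(R,\varepsilon)\mapsto c$ is a bijection onto the weight-$5$ codewords, and the rest of your proof goes through.
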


\begin{proof}
Fix the three points
$T_0:=\{\infty,0,-1\}\subseteq \mathbb P^1(\mathbb F_q).$
By definition,
$\lambda_2=\#\{\,B\in \mathcal{B}_{W_7^{\mathrm{Luc}}}:T_0\subseteq B\,\}.$
Every such block has the form
$$
B=\{\infty,0,-1,x_1,x_2,x_3,x_4\},
$$
where $x_1,x_2,x_3,x_4\in\mathbb F_q^\times\setminus\{-1\}$ are pairwise distinct. 
For $1\le i\le 4$, set
$e_i=\sigma_i(x_1,x_2,x_3,x_4).$
Then
$$
\prod_{j=1}^4(X-x_jY)=X^4-e_1X^3Y+e_2X^2Y^2-e_3XY^3+e_4Y^4,
$$
and hence
\begin{align*}
F_B(X,Y)
&=Y\cdot X\cdot (X+Y)\prod_{j=1}^4(X-x_jY) \\
&=X^6Y+(1-e_1)X^5Y^2+(e_2-e_1)X^4Y^3 
 +(e_2-e_3)X^3Y^4+(e_4-e_3)X^2Y^5+e_4XY^6.
\end{align*}
Since
$$
W_7^{\mathrm{Luc}}=\Span_{\mathbb F_q}\{X^7,X^6Y,X^4Y^3,X^3Y^4,XY^6,Y^7\},
$$
we obtain $B\in \mathcal{B}_{W_7^{\mathrm{Luc}}}$ if and only if $1-e_1=0,\ e_4-e_3=0,$ that is,
$e_1=1,\ e_3=e_4.$
Since
$$
x_1+x_2+x_3+x_4=e_1,\qquad
x_1^{-1}+x_2^{-1}+x_3^{-1}+x_4^{-1}=\frac{e_3}{e_4},
$$
it follows that $B\in \mathcal{B}_{W_7^{\mathrm{Luc}}}$
if and only if 
$$x_1+x_2+x_3+x_4=1,\ 
x_1^{-1}+x_2^{-1}+x_3^{-1}+x_4^{-1}=1.$$
Let
$
\Lambda:=
\Bigl\{
\{x_1,x_2,x_3,x_4\}\subseteq \mathbb F_q^\times\setminus\{-1\}:
x_i\ \text{pairwise distinct},
\ \sum_{i=1}^4x_i=1,\ \sum_{i=1}^4x_i^{-1}=1
\Bigr\}.
$
Then $\lambda_2=|\Lambda|.$
Define
$ P:=\{(c,t):c\in M(q),\ \mathrm{wt}(c)=5,\ t\in \Supp(c)\}.
$
Since every weight-$5$ codeword has exactly five support positions,
$|P|=5A_5.$
For $(c,t)\in P$, write
$$
c=\sum_{x\in\Supp(c)}\varepsilon_xe_x,\qquad \varepsilon_x\in\{\pm 1\}\subseteq\mathbb F_3.
$$
For each $x\in \Supp(c)\setminus\{t\}$, define
$u_x:=-\varepsilon_x\varepsilon_t\,\frac{x}{t}.$
Set $\Phi(c,t):=\{u_x:x\in \Supp(c)\setminus\{t\}\}.$
We claim that $\Phi$ defines a map
$\Phi:P\longrightarrow \Lambda.$
Since $c\in M(q)$, we have
$\sum_{x\in\Supp(c)}\varepsilon_xx=0,$ and 
$\sum_{x\in\Supp(c)}\varepsilon_xx^{-1}=0.$
Separating the term indexed by $t$ and dividing by $-\varepsilon_tt$ and $-\varepsilon_tt^{-1}$, respectively, we obtain
$$
\sum_{x\in\Supp(c)\setminus\{t\}}u_x=1,
\qquad
\sum_{x\in\Supp(c)\setminus\{t\}}u_x^{-1}=1.
$$
Thus only distinctness and the exclusion of the value $-1$ remain to be checked.

First, no $u_x$ can be equal to $1$. Indeed, if $u_x=1$, then
$-\varepsilon_x\varepsilon_t\,\frac{x}{t}=1,$
so
$\varepsilon_xx=-\varepsilon_tt,$ and $\varepsilon_xx^{-1}=-\varepsilon_tt^{-1}.$
Deleting the coordinates $x$ and $t$ would then produce a codeword of weight $3$ in $M(q)$, contradicting $A_3=0$.

Next, no $u_x$ can be equal to $-1$. Suppose $u_x=-1$ for some $x\ne t$, and let the remaining normalized values be $v_1,v_2,v_3$. Then
$$
v_1+v_2+v_3=-1,\qquad v_1^{-1}+v_2^{-1}+v_3^{-1}=-1.
$$
If
$$
e_1''=v_1+v_2+v_3,\qquad e_2''=v_1v_2+v_1v_3+v_2v_3,\qquad e_3''=v_1v_2v_3,
$$
then $e_1''=-1$ and $e_2''/e_3''=-1$, so $e_2''=-e_3''$. Hence $v_1,v_2,v_3$ are the roots of
$$
T^3-e_1''T^2+e_2''T-e_3''=T^3+T^2+e_2''T+e_2''=(T+1)(T^2+e_2'').
$$
Thus one of the $v_i$ equals $-1$. Repeating the same argument gives at least two normalized values equal to $-1$, which is impossible, since $u_x=-1$ is equivalent to $x=\varepsilon_x\varepsilon_tt$, and for fixed $t$ there is at most one such $x\neq t$.
Finally, the values $u_x$ are pairwise distinct. Suppose $u_x=u_y$ for distinct $x,y\in \Supp(c)\setminus\{t\}$. Excluding the case $u_x=-1$ already treated, write $u_x=u_y=u\neq -1$, and let the remaining two normalized values be $v,w$. Then
$$
u+u+v+w=1,\qquad u^{-1}+u^{-1}+v^{-1}+w^{-1}=1.
$$
Since the characteristic is $3$, this becomes
$$
v+w=1+u,\qquad v^{-1}+w^{-1}=1+u^{-1}=\frac{u+1}{u}.
$$
Hence
$$
\frac{v+w}{vw}=\frac{u+1}{u},
$$
so, because $u\neq -1$, we obtain $vw=u.$
Therefore $v$ and $w$ are the roots of
$$
T^2-(1+u)T+u=(T-1)(T-u).
$$
Thus one of $v,w$ equals $1$, contradicting the fact already proved that no normalized value equals $1$.
We have shown that $\Phi(c,t)\in \Lambda$, so $\Phi$ is well defined.

We now compute the cardinality of each fiber. 
Fix $U=\{u_1,u_2,u_3,u_4\}\in \Lambda.$
We first show that $1\notin U$. 
Suppose, to the contrary, that $U=\{1,v_1,v_2,v_3\}$.
Since $U\in\Lambda$, we have
\[
v_1+v_2+v_3=0,
\qquad
v_1^{-1}+v_2^{-1}+v_3^{-1}=0.
\]
Let
\[
e_1''=v_1+v_2+v_3,\qquad
e_2''=v_1v_2+v_1v_3+v_2v_3,\qquad
e_3''=v_1v_2v_3.
\]
Then $e_1''=0$ and $e_2''/e_3''=0$, so $e_2''=0$. 
Hence $v_1,v_2,v_3$ are the three roots of $T^3-e_3''$.
Since $q=3^m$, the Frobenius map $x\mapsto x^3$ is a bijection on $\mathbb F_q$, so the polynomial $T^3-e_3''$ has a unique root in $\mathbb F_q$, counted with multiplicity. This contradicts the fact that $v_1,v_2,v_3$ are pairwise distinct. Therefore $1\notin U$.
Next we show that $U$ contains no pair $\{u,-u\}$. 
Suppose, to the contrary, that $U=\{u,-u,v,w\}$. 
Since $U\in\Lambda$, we have  $v+w=1$ and $ v^{-1}+w^{-1}=1$.
Thus $\frac{v+w}{vw}=1$.
Because $v+w=1$, it follows that $vw=1$. 
Hence $v$ and $w$ are the two roots of
\[
T^2-(v+w)T+vw=T^2-T+1=(T+1)^2
\]
in characteristic $3$. Therefore $v=w=-1$, contradicting both the distinctness of the elements of $U$ and the condition
$U\subseteq \mathbb F_q^\times\setminus\{-1\}$.
So $U$ contains no pair $\{u,-u\}$.

Choose $t\in\mathbb F_q^\times,$ which gives $q-1$ choices, choose $\varepsilon_t\in\{\pm 1\},$
which gives $2$ choices, and choose arbitrary sign $\delta_1,\delta_2,\delta_3,\delta_4\in\{\pm 1\},$
which gives $2^4=16$ choices.  For $1\le i\le 4$, define
$x_i:=\delta_i u_it$ and $ \varepsilon_{x_i}:=-\delta_i\varepsilon_t$. Set
$c:=\varepsilon_te_t+\sum_{i=1}^4\varepsilon_{x_i}e_{x_i}.$
We claim that the five positions $t,x_1,x_2,x_3,x_4$ are pairwise distinct. 
First, if $x_i=t$, then $\delta_i u_i=1$, so $u_i=\delta_i\in\{\pm1\}$. 
Since $U\subseteq \mathbb F_q^\times\setminus\{-1\}$ and we have already proved that $1\notin U$, this is impossible. 
Next, if $x_i=x_j$ for some $i\ne j$, then $\delta_i u_i=\delta_j u_j$, hence $u_i=\delta_i\delta_j\,u_j$.
If $\delta_i\delta_j=1$, then $u_i=u_j$, contradicting the distinctness of the elements of $U$. If $\delta_i\delta_j=-1$, then $u_i=-u_j$, contradicting the fact just proved that $U$ contains no pair $\{u,-u\}$. Therefore $t,x_1,x_2,x_3,x_4$ are pairwise distinct, and so $\wt(c)=5$.
Exactly as in the construction above, one checks that $c\in M(q)$, $\mathrm{wt}(c)=5$, and $\Phi(c,t)=U.$
Conversely, every preimage of $U$ arises in this way: if $(c,t)\in\Phi^{-1}(U)$ and
\[
c=\varepsilon_t e_t+\sum_{i=1}^4\varepsilon_{x_i}e_{x_i},
\]
then, after indexing the four elements of $\Supp(c)\setminus\{t\}$ so that
$u_i=-\varepsilon_{x_i}\varepsilon_t\frac{x_i}{t}$ for $1\le i\le 4$,
we recover
$x_i=\delta_i u_i t$ and $ \varepsilon_{x_i}=-\delta_i\varepsilon_t$,
with $\delta_i:=-\varepsilon_{x_i}\varepsilon_t\in\{\pm1\}$.
Thus every element of $\Phi^{-1}(U)$ is obtained uniquely from a choice of
\[
t\in\mathbb F_q^\times,\qquad \varepsilon_t\in\{\pm1\},\qquad
(\delta_1,\delta_2,\delta_3,\delta_4)\in\{\pm1\}^4.
\]
Therefore
$$
|\Phi^{-1}(U)|=(q-1)\cdot 2\cdot 16=32(q-1)
$$
for every $U\in \Lambda$.
Counting $P$ in two ways gives
$$
5A_5=|P|=\sum_{U\in\Lambda}|\Phi^{-1}(U)|=32(q-1)|\Lambda|=32(q-1)\lambda_2,
$$
and hence
$\lambda_2=\frac{5A_5}{32(q-1)}.$
Applying Lemma~\ref{lem:Melas-weight-5}, we obtain
$$
\lambda_2=\frac{q^2+(( -1)^m-14)q+36}{24}.
$$
The formula for $|\mathcal{B}_{W_7^{\mathrm{Luc}}}|$ follows from
$$
|\mathcal{B}_{W_7^{\mathrm{Luc}}}|=\lambda_2\frac{\binom{q+1}{3}}{\binom{7}{3}}.
$$
The final assertion follows from Proposition~\ref{prop:BW-design}.
\end{proof}
\begin{remark}
    As an immediate consequence, the parameter \(\lambda_2\) appearing in Theorems~2 and~5 of \cite{Xu2024} is now determined explicitly.
\end{remark}

To compute the parameter $\lambda_1$ from \cite[Theorems~1 and~4]{Xu2024}, we first introduce an important lemma.

\begin{lemma}\label{lem:unit-circle-aux}
For any pairwise distinct elements $y_1,\dots,y_5\in U_{q+1}\setminus\{1\}$, one has
$$
e_2(y_1,\dots,y_5,1,1)\neq 0.
$$
\end{lemma}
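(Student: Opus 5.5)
The plan is to reduce this to the $\mathbb P^1(\mathbb F_q)$-model by running the Cayley transform backwards, exactly as in Proposition~\ref{prop:Cayley-block} and Theorem~\ref{thm:Lucas-Cayley}, but now for a \emph{multiset} of points. Throughout, $q=3^m$ as in this section, so $k=7=(21)_3$ and the allowed exponents are $i\in\{0,1,3,4,6,7\}$. Set
\[
G(U,V):=(U-V)^2\prod_{i=1}^5(U-y_iV)=\sum_{a=0}^7(-1)^a e_a(y_1,\dots,y_5,1,1)\,U^{7-a}V^a .
\]
Suppose, for contradiction, that $e_2(y_1,\dots,y_5,1,1)=0$.

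\textbf{Step 1 (symmetry).} I will first check that the identity $e_{7-a}=e_7\,e_a^q$ from the proof of Lemma~\ref{lem:Lucas-symmetry} still holds for the multiset $(y_1,\dots,y_5,1,1)$. That proof only uses that every entry lies in $U_{q+1}$ and that $e_7=\prod y_i\neq0$; distinctness is never used. Hence $e_2=0$ forces $e_5=0$. Since $\{2,5\}$ is exactly the forbidden set $F_{7,3}$, this gives $G\in W^{\mathrm{Luc}}_{7,\mathbb F_{q^2}}$, the span of the monomials $U^{7-a}V^a$ with $a\le_3 7$.

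\textbf{Step 2 (pull back).} Because $y_i\in U_{q+1}\setminus\{1\}$ and $\kappa$ is a bijection onto $U_{q+1}$ with $\kappa(\infty)=1$, I can write $y_i=\kappa(t_i)$ with $t_i\in\mathbb F_q$. The factor computation in the proof of Proposition~\ref{prop:Cayley-block} holds factor by factor: $X-tY$ corresponds to a nonzero multiple of $U-\kappa(t)V$, and $Y$ corresponds to $U-V$. Therefore
\[
F(X,Y):=Y^2\prod_{i=1}^5(X-t_iY)
\]
satisfies $F(\xi^qU-\xi V,U-V)=c\,G(U,V)$ for some $c\in\mathbb F_{q^2}^\times$. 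As shown in the proof of Theorem~\ref{thm:Lucas-Cayley}, $W^{\mathrm{Luc}}_{7,\mathbb F_{q^2}}$ is $\GL_2(\mathbb F_{q^2})$-invariant, so it is preserved by $H^{-1}$. It follows that $F\in W^{\mathrm{Luc}}_{7,\mathbb F_{q^2}}$.

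\textbf{Step 3 (contradiction).} Expanding gives $F=X^5Y^2+(\text{terms }X^{5-j}Y^{2+j},\ j\ge1)$. The coefficient of $X^5Y^2$ is $1\neq0$, yet $2\not\le_3 7$, so $F\notin W^{\mathrm{Luc}}_{7,\mathbb F_{q^2}}$. This contradicts Step~2 and proves the lemma. Distinctness of the $y_i$ is not even needed.

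I expect the only delicate point to be justifying that the Cayley machinery is legitimate for multisets, that is, with repeated factors $U-V$. I would handle this by avoiding the block formalism and working directly with the polynomial identity and the $H$-invariance of $W^{\mathrm{Luc}}_{7,\mathbb F_{q^2}}$, as above.
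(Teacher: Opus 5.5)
Your proof is correct, and it takes a genuinely different route from the paper.

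\textbf{The paper's route.} The paper stays on the unit circle and computes directly. With $g(X)=\prod_{i=1}^5(X-y_i)=X^5+a_4X^4+\dots+a_0$, it uses the characteristic-$3$ identity $(X-1)^2=X^2+X+1$ to identify $e_2(y_1,\dots,y_5,1,1)$ with $1+a_4+a_3$. It then uses the conjugate-reciprocal relations $a_1=a_0a_4^q$ and $a_2=a_0a_3^q$, which come from $y_i^q=y_i^{-1}$, to get $g(1)=(1+a_4+a_3)+a_0(1+a_4+a_3)^q=0$. This forces some $y_i=1$.

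\textbf{Your route.} You use the same reciprocity only to pass from $e_2=0$ to $e_5=0$, so that $G$ lies in the Lucas span. You then transport the question back to $\mathbb P^1(\mathbb F_q)$ with $H^{-1}$. There the double factor $Y^2$ visibly puts a coefficient $1$ at the forbidden position $X^5Y^2$. Each step checks out:
\begin{itemize}
\item the identity $e_{7-a}=e_7e_a^q$ needs only nonzero entries in $U_{q+1}$, not distinctness;
\item the factorization $X-tY\mapsto(\xi^q-t)(U-\kappa(t)V)$ works factor by factor, so repeated factors cause no trouble;
\item the $\GL_2(\mathbb F_{q^2})$-invariance of the Lucas span, needed to apply $H^{-1}$, is exactly what the proof of Theorem~\ref{thm:Lucas-Cayley} records.
\end{itemize}

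\textbf{What each buys.} The paper's argument is shorter and self-contained, using none of the Cayley or Lucas machinery. Yours is more conceptual: it explains the lemma as the fact that no nonzero element of $W_7^{\mathrm{Luc}}$ vanishes to order exactly $2$ at $\infty$, because $2\not\le_3 7$. It therefore adapts verbatim to other $k$ and to a point of multiplicity exactly $r$ with $r\not\le_p k$. In that setting it shows that the $e_a$ with $a\in F_{k,p}$ cannot all vanish. Neither your proof nor the paper's actually uses the pairwise distinctness of the $y_i$.
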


\begin{proof}
Suppose, to the contrary, that
$e_2(y_1,\dots,y_5,1,1)=0.$
Let
$$
g(X)=\prod_{i=1}^5(X-y_i)=X^5+a_4X^4+a_3X^3+a_2X^2+a_1X+a_0.
$$
Since $g(X)(X-1)^2=\prod_{i=1}^5(X-y_i)(X-1)^2,$
the roots of $g(X)(X-1)^2$ are precisely $y_1,\dots,y_5,1,1$. Hence the coefficient of $X^5$ in this polynomial is $e_2(y_1,\dots,y_5,1,1),$ and therefore vanishes by assumption.
Since the characteristic is $3$, we have $(X-1)^2=X^2+X+1.$
Thus
$$
g(X)(X-1)^2=g(X)(X^2+X+1),
$$
and the coefficient of $X^5$ in this product is $1+a_4+a_3$, so $1+a_4+a_3=0.$

Write
$$
g(X)=X^5-e_1X^4+e_2X^3-e_3X^2+e_4X-e_5,
$$
where $e_j=e_j(y_1,\dots,y_5)$. Since each $y_i\in U_{q+1}$, it satisfies $y_i^q=y_i^{-1}$. We have
$$
e_j(y_1,\dots,y_5)^q=e_j(y_1^{-1},\dots,y_5^{-1})=\frac{e_{5-j}(y_1,\dots,y_5)}{e_5(y_1,\dots,y_5)}
$$
for $0\le j\le 5$. Therefore $e_4=e_5e_1^q,$ and $ e_3=e_5e_2^q.$

Since
$$
a_4=-e_1,\quad a_3=e_2,\quad a_2=-e_3,\quad a_1=e_4,\quad a_0=-e_5,
$$
it follows that $a_1=a_0a_4^q$ and $ a_2=a_0a_3^q.$
Taking $q$-th powers in $1+a_4+a_3=0$, we obtain $1+a_4^q+a_3^q=0.$
Hence
\begin{align*}
g(1)
&=1+a_4+a_3+a_2+a_1+a_0 \\
&=(1+a_4+a_3)+a_0(1+a_4^q+a_3^q)=0.
\end{align*}
Thus $1$ is a root of $g(X)$, which means that $y_i=1$ for some $i$. This contradicts the assumption that $y_1,\dots,y_5\in U_{q+1}\setminus\{1\}$. Therefore
$e_2(y_1,\dots,y_5,1,1)\neq 0.$
\end{proof}

We next compute the parameter $\lambda_1$ from \cite[Theorems~1 and~4]{Xu2024} by relating the $7$-subset design in Theorem~\ref{thm:lambda2} to the $6$-subset design appearing in \cite[Theorems~1 and~4]{Xu2024}.

% Also recall that, by Example~\ref{ex:W7-char3},
% $$
% \widetilde{\mathcal B}_{W_7^{\mathrm{Luc}}}
% =
% \left\{
% B\in\binom{U_{q+1}}{7}:e_2(B)=0
% \right\}.
% $$

\begin{proposition}\label{pro:lambda1}Let $q=3^m$. For a $6$-subset $A=\{x_1,\dots,x_6\}\subseteq U_{q+1}$, write $e_i(A)=e_i(x_1,\dots,x_6),$ where $0\le i\le 6$.
    Let $\mathcal A=
\left\{
A\in\binom{U_{q+1}}{6}: e_4(A)e_2(A)=e_5(A)e_1(A)
\right\}$ and  $
 \widetilde{\mathcal B}_{W_7^{\mathrm{Luc}}}
 =
 \left\{
 B\in\binom{U_{q+1}}{7}:e_2(B)=0
 \right\}.
 $ Then $|\mathcal A|=7|\widetilde{\mathcal B}_{W_7^{\mathrm{Luc}}} |$.
\end{proposition}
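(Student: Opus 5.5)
The plan is to set up an explicit $7$-to-$1$ correspondence between $\mathcal A$ and $\widetilde{\mathcal B}_{W_7^{\mathrm{Luc}}}$. We send a $7$-block $B$ to its seven $6$-subsets $B\setminus\{y\}$, and conversely send $A\in\mathcal A$ to $A\cup\{y\}$ for a uniquely determined $y$. The basic identity, for $y\notin A$, is
$$e_2(A\cup\{y\})=e_2(A)+y\,e_1(A).$$
The unit-circle symmetry from the proof of Lemma~\ref{lem:Lucas-symmetry} gives, for a $6$-subset $A\subseteq U_{q+1}$,
$$e_a(A)^q=\frac{e_{6-a}(A)}{e_6(A)}\quad\text{for }0\le a\le 6,$$
and $e_6(A)\neq0$. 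In particular $e_5=e_6e_1^q$ and $e_4=e_6e_2^q$.

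Step 1: every $6$-subset of a block lies in $\mathcal A$. Let $B\in\widetilde{\mathcal B}_{W_7^{\mathrm{Luc}}}$, let $y\in B$, and set $A=B\setminus\{y\}$. Then $e_2(A)=-y\,e_1(A)$. Using $y^q=y^{-1}$, we get $e_4(A)=e_6e_2^q=-e_6y^{-1}e_1^q$. Hence
$$e_4e_2=e_6e_1^{q+1}=e_5e_1,$$
so $A\in\mathcal A$. The seven choices of $y$ give seven distinct $6$-subsets.

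Step 2: each $A\in\mathcal A$ lies in exactly one block, which gives $|\mathcal A|=7|\widetilde{\mathcal B}_{W_7^{\mathrm{Luc}}}|$ by double counting.

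First, $e_1(A)\neq0$. Suppose $e_1(A)=0$. Then $e_5=0$, so the defining relation forces $e_4e_2=0$. Since $e_4=e_6e_2^q$, this gives $e_2=0$, and then $e_4=0$ as well. The polynomial $\prod_{x\in A}(X-x)$ becomes $X^6-e_3X^3+e_6$. In characteristic $3$ this is a cube of a quadratic, because $\mathbb F_{q^2}$ is perfect, so it has repeated roots. This contradicts the distinctness of the elements of $A$.

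Second, $e_2(A)\neq0$. If $e_2=0$, the relation gives $e_5e_1=0$, which is impossible since $e_5=e_6e_1^q\ne0$.

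Now put $y:=-e_2/e_1$. Any block containing $A$ must be $A\cup\{y\}$, so $y$ is the only candidate. The conjugation rules give
$$y^{q+1}=\frac{e_2\,e_2^q}{e_1\,e_1^q}=\frac{e_2e_4}{e_1e_5}=1,$$
so $y\in U_{q+1}$.

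The remaining obstacle is to show $y\notin A$. If $y\in A$, the multiset $A\cup\{y\}$ has $y$ doubled and satisfies $e_2=e_2(A)+ye_1(A)=0$. Divide every element by $y$. This preserves $U_{q+1}$ and scales $e_2$ by $y^{-2}$. We obtain five distinct elements of $U_{q+1}\setminus\{1\}$, together with $1,1$, whose $e_2$ vanishes. This contradicts Lemma~\ref{lem:unit-circle-aux}. Hence $A\cup\{y\}$ is a genuine $7$-subset with $e_2=0$, and it is the unique block containing $A$.

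Combining the two steps, the incidence pairs $(A,B)$ with $A\subset B$ number $7|\widetilde{\mathcal B}_{W_7^{\mathrm{Luc}}}|$ on one side and $|\mathcal A|$ on the other, which proves $|\mathcal A|=7|\widetilde{\mathcal B}_{W_7^{\mathrm{Luc}}}|$.
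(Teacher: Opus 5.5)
Your proposal is correct and follows the paper's argument essentially step for step. The paper's bijection $(B,x)\mapsto B\setminus\{x\}$ is your double count, and both arguments choose $x=-e_2(A)/e_1(A)$ and use Lemma~\ref{lem:unit-circle-aux} to rule out $x\in A$. The one deviation is how you show $e_1(A)\neq 0$: you note that $\prod_{x\in A}(X-x)=X^6-e_3X^3+e_6$ is a perfect cube in characteristic $3$, which is a self-contained alternative to the paper's second appeal to Lemma~\ref{lem:unit-circle-aux}.
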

\begin{proof}
Set $\mathcal B^*:=\{(B,x):B\in \widetilde{\mathcal B}_{W_7^{\mathrm{Luc}}},\ x\in B\}.$
It follows that  $|\mathcal B^*|=7\,|\widetilde{\mathcal B}_{W_7^{\mathrm{Luc}}}|.$
We claim that the map
$$
\phi:\mathcal B^*\longrightarrow \mathcal A,\qquad (B,x)\longmapsto B\setminus\{x\},
$$
is a bijection.
Let $(B,x)\in \mathcal B^*$, and set
$A:=B\setminus\{x\}.$
Since $e_2(B)=0$, we have
$e_2(A)+xe_1(A)=0.$
Hence $e_2(A)^{q+1}=(-1)^{q+1}x^{q+1}e_1(A)^{q+1}=e_1(A)^{q+1},$ because $q+1$ is even and  $x\in U_{q+1}$ implies $x^{q+1}=1$.
Using the symmetry relation $e_{6-i}(A)=e_6(A)e_i(A)^q,$ it is equivalent to $e_4(A)e_2(A)=e_5(A)e_1(A).$
Thus $A\in \mathcal A$, so $\phi$ is well defined.
Conversely, let
$A=\{x_1,\dots,x_6\}\in \mathcal A.$
We first show that $e_1(A)\neq 0$. Otherwise the defining equation for $\mathcal A$ gives
$e_2(A)^{q+1}=e_1(A)^{q+1}=0,$
hence $e_2(A)=0$. Dividing by $x_6$, we obtain
$$
e_1\!\left(\frac{x_1}{x_6},\dots,\frac{x_5}{x_6},1\right)=0,
\qquad
e_2\!\left(\frac{x_1}{x_6},\dots,\frac{x_5}{x_6},1\right)=0.
$$
Therefore
$$
e_2\!\left(\frac{x_1}{x_6},\dots,\frac{x_5}{x_6},1,1\right)=e_2\!\left(\frac{x_1}{x_6},\dots,\frac{x_5}{x_6},1\right)+1\cdot e_1\!\left(\frac{x_1}{x_6},\dots,\frac{x_5}{x_6},1\right)=0
$$
contradicting Lemma~\ref{lem:unit-circle-aux}. Thus $e_1(A)\neq 0$.

Now define
$x:=-\frac{e_2(A)}{e_1(A)}.$
Since $A\in \mathcal A$, we have $x^{q+1}=1$, so $x\in U_{q+1}$. We next show that $x\notin A$. Suppose, for instance, that $x=x_1$. Then $e_2(x_1,x_1,x_2,\dots,x_6)=0.$
Dividing by $x_1$ gives $e_2\!\left(1,1,\frac{x_2}{x_1},\dots,\frac{x_6}{x_1}\right)=0,$
again contradicting Lemma~\ref{lem:unit-circle-aux}. Hence $x\notin A$.
Set
$B:=A\cup\{x\}.$
Then $e_2(B)=e_2(A)+xe_1(A)=0,$
so
$B\in \widetilde{\mathcal B}_{W_7^{\mathrm{Luc}}}$
and clearly $\phi(B,x)=A.$
Thus $\phi$ is surjective. Finally, we show that $\phi$ is injective. If $\phi(B_1,x_1)=\phi(B_2,x_2)=A$, then we have  $e_2(B_1)= e_2(A)+x_1e_1(A)=0$ and  $e_2(B_2)= e_2(A)+x_2e_1(A)=0$. Since $e_1(A)\neq0$, we have $x_1=x_2$ and then $B_1=B_2$.

Therefore $\phi$ is a bijection, and hence
$$
|\mathcal A|=|\mathcal B^*|=7\,|\widetilde{\mathcal B}_{W_7^{\mathrm{Luc}}}|.
$$
This completes the proof.
\end{proof}
\begin{theorem}
 Let $\lambda_1$ denote the parameter of the $3$-$(q+1,6,\lambda_1)$ design associated with the block set $\mathcal A$, equivalently, the parameter occurring in \cite[Theorems~1 and~4]{Xu2024}. Then
$$
\lambda_1=4\lambda_2=\frac{q^2+(( -1)^m-14)q+36}{6}.
$$
In particular, the parameter $\lambda_1$ left open in \cite[Theorems~1 and~4]{Xu2024} is determined explicitly.
\end{theorem}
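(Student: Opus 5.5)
Since $\mathcal A$ is the block set of a $3$-$(q+1,6,\lambda_1)$ design on $U_{q+1}$ (as in \cite[Theorems~1 and~4]{Xu2024}), I would determine $\lambda_1$ from $|\mathcal A|$ using the basic relation \eqref{solve_b}. By Proposition~\ref{pro:lambda1}, $|\mathcal A|=7|\widetilde{\mathcal B}_{W_7^{\mathrm{Luc}}}|$. Because $\kappa$ is a bijection, $|\widetilde{\mathcal B}_{W_7^{\mathrm{Luc}}}|=|\mathcal B_{W_7^{\mathrm{Luc}}}|$, and by \eqref{solve_b} applied to the $3$-$(q+1,7,\lambda_2)$ design of Theorem~\ref{thm:lambda2} this equals $\lambda_2\binom{q+1}{3}/\binom{7}{3}$. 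Applying \eqref{solve_b} to $\mathcal A$ gives
\[
\lambda_1=\frac{|\mathcal A|\binom{6}{3}}{\binom{q+1}{3}}=\frac{7\,\lambda_2\binom{6}{3}}{\binom{7}{3}}=\frac{7\cdot 20}{35}\lambda_2=4\lambda_2 .
\]
Substituting the formula for $\lambda_2$ from Theorem~\ref{thm:lambda2} then yields $\lambda_1=\bigl(q^2+((-1)^m-14)q+36\bigr)/6$.

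The only point needing care is that $\mathcal A$ is indeed a $3$-design, so that \eqref{solve_b} applies. I would either cite \cite{Xu2024} for this or check it directly. For the direct check, I would show that $\mathcal A$ is invariant under the group of Möbius maps preserving $U_{q+1}$. This group is $\kappa\circ\PGL_2(\mathbb F_q)\circ\kappa^{-1}$, which is $3$-homogeneous on $U_{q+1}$ by Lemma~\ref{lemma:3-transitive}. Since $\phi$ is a bijection from $\mathcal B^*$ onto $\mathcal A$, the set $\mathcal A$ is exactly the collection of $6$-subsets of blocks of $\widetilde{\mathcal B}_{W_7^{\mathrm{Luc}}}$, counted once each. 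Because $\widetilde{\mathcal B}_{W_7^{\mathrm{Luc}}}$ is invariant under this group by Proposition~\ref{prop:BW-design} and Proposition~\ref{prop:Cayley-block}, so is $\mathcal A$, and Lemma~\ref{lemma:design} applies.

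Alternatively, one can count $\lambda_1$ directly. A $6$-subset containing a fixed $3$-set $T_0$ lies in $\mathcal A$ if and only if it equals $B\setminus\{x\}$ for a unique pair $(B,x)$ with $T_0\subseteq B\setminus\{x\}$. The number of such pairs is $\lambda_2$ blocks times $4$ choices of $x\in B\setminus T_0$, giving $\lambda_1=4\lambda_2$ immediately.

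I expect no real obstacle. The substantive work is already contained in Proposition~\ref{pro:lambda1} and Theorem~\ref{thm:lambda2}, and the direct count makes even the design property transparent.
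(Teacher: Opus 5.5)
Your main argument is the same as the paper's: combine $|\mathcal A|=7|\widetilde{\mathcal B}_{W_7^{\mathrm{Luc}}}|$ from Proposition~\ref{pro:lambda1} with \eqref{solve_b} applied to both designs to get $\lambda_1=\tfrac{7\cdot 20}{35}\lambda_2=4\lambda_2$, then substitute the formula from Theorem~\ref{thm:lambda2}. Your extra justifications are also correct and go slightly beyond the paper, which takes the $3$-design property of $\mathcal A$ from \cite{Xu2024}. Because $\phi$ is a bijection, $\mathcal A$ is exactly the set of $6$-subsets of blocks, so it is invariant under $\kappa\PGL_2(\mathbb F_q)\kappa^{-1}$. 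Your direct count ($\lambda_2$ blocks through a fixed $3$-set times $4$ choices of the deleted point) then proves in one step that $\mathcal A$ is a $3$-design with $\lambda_1=4\lambda_2$.
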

\begin{proof}
By (\ref{solve_b}), we have 
$$
|\mathcal A|=\lambda_1\frac{\binom{q+1}{3}}{\binom{6}{3}},
\qquad
|\widetilde{\mathcal B}_{W_7^{\mathrm{Luc}}}|=\lambda_2\frac{\binom{q+1}{3}}{\binom{7}{3}}.
$$
Using $|\mathcal A|=7|\widetilde{\mathcal B}_{W_7^{\mathrm{Luc}}}|$ in  Proposition~\ref{pro:lambda1}, we obtain
$$
\lambda_1\frac{\binom{q+1}{3}}{20}
=
7\lambda_2\frac{\binom{q+1}{3}}{35},
$$
and therefore
$\lambda_1=4\lambda_2.$
Substituting the value of $\lambda_2$ from Theorem~\ref{thm:lambda2}, we get
$\lambda_1=\frac{q^2+(( -1)^m-14)q+36}{6}.$   
\end{proof}

\section{Conclusion}\label{section:Conclusion}

In this paper, we have established a unified framework for constructing $3$-designs from $\GL_2(\mathbb F_q)$-invariant subspaces of $\mathbb F_q[X,Y]_k$. The point of view adopted here links invariant subspaces, block families on $\mathbb P^1(\mathbb F_q)$, and associated subcodes of the projective Reed--Solomon code. In particular, when $k\le q$, the framework also yields $3$-designs from the supports of minimum-weight codewords in the associated subcodes and from the supports of suitable fixed-weight codewords in their duals. The Cayley transform provides an equivalent formulation on the unit circle, where the block conditions become explicit linear relations among elementary symmetric polynomials.

For the Lucas subspaces $W_k^{\mathrm{Luc}}$, this framework leads to concrete descriptions of the corresponding block sets and to several criteria for emptiness and nonemptiness. It also produces explicit families of blocks, including those arising from subfield lines, and completely determines the case $k=p^m+1$, where the resulting design is the Steiner system $S(3,p^m+1,q+1)$ whenever it exists. In the ternary case $k=7$, the known weight distribution of the ternary Melas code further allows us to determine explicitly the parameters left open in \cite{Xu2024}.

Several natural questions remain for further investigation. One direction is to study other families of $\GL_2(\mathbb F_q)$-invariant subspaces, to determine when the associated block families are nonempty, and to analyze the support designs arising from the corresponding codes and dual codes. It would also be interesting to obtain more explicit criteria for emptiness and nonemptiness, as well as closed formulas for the parameters of the resulting designs. Finally, determining the automorphism groups and exploring further coding-theoretic properties of the associated codes remain worthwhile problems.
%\section*{Declaration of generative AI and AI-assisted technologies in the manuscript preparation process}
%During the preparation of this work, the authors used Gemini to improve the language and readability of the manuscript. After using this tool, the authors reviewed and edited the content as needed and take full responsibility for the content of the published article.
\bibliographystyle{plain}
\bibliography{ref}
\end{document}